\newtheorem{thm}{Theorem}[section]
\newtheorem{theorem}[thm]{Theorem}
\newtheorem{corollary}[thm]{Corollary}
\newtheorem{proposition}[thm]{Proposition}
\newtheorem{remark}[thm]{Remark}
\theoremstyle{definition}
\newtheorem{example}[thm]{Example}
\newcommand{\subalign}[1]{%
  \vcenter{%
    \Let@ \restore@math@cr \default@tag
    \baselineskip\fontdimen10 \scriptfont\tw@
    \advance\baselineskip\fontdimen12 \scriptfont\tw@
    \lineskip\thr@@\fontdimen8 \scriptfont\thr@@
    \lineskiplimit\lineskip
    \ialign{\hfil$\m@th\scriptstyle##$&$\m@th\scriptstyle{}##$\crcr
      #1\crcr
    }%
  }
}
\begin{document}

\centerline{\Large \bf Young-Capelli bitableaux, }

\bigskip

\centerline{\Large \bf   Capelli immanants in $\textbf{U}(gl(n))$}

\bigskip

\centerline{\Large \bf and }

\bigskip

\centerline{\Large \bf  the Okounkov quantum immanants}

\bigskip

\centerline{A. Brini and A. Teolis}
\centerline{\it $^\flat$ Dipartimento di Matematica, Universit\`{a} di
Bologna }
 \centerline{\it Piazza di Porta S. Donato, 5. 40126 Bologna. Italy.}
\centerline{\footnotesize e-mail of corresponding author: andrea.brini@unibo.it}
\medskip

\begin{abstract}

The set of standard Capelli bitableaux and the set of standard Young-Capelli bitableaux are bases
of $\mathbf{U}(gl(n))$, whose action on the Gordan-Capelli basis of polynomial algebra ${\mathbb C}[M_{n,n}]$
have remarkable properties (see, e.g. \cite{Brini1-BR}, \cite{Brini2-BR}, \cite{Brini3-BR}, \cite{Brini4-BR}).

We introduce a new class of elements of $\mathbf{U}(gl(n))$, called \emph{Capelli immanants},
that  can be efficiently computed and provide a system of linear generators of $\mathbf{U}(gl(n))$.
The Okounkov quantum immanants \cite{Okounkov-BR}, \cite{Okounkov1-BR} - \emph{quantum immanants}, for short - are proved
to be simple linear combinations of \emph{diagonal} Capelli immanants,
with explicit coefficients (Theorem \ref{quantum}, Eq. (\ref{quantum first})).
Quantum immanants can  also be expressed as sums of \emph{double Young-Capelli bitableaux}
(Theorem \ref{quantum second}, Eq. (\ref{schur second eq})). Since double Young-Capelli bitableaux
uniquely expand into linear combinations of \emph{standard} Young-Capelli bitableaux, Eq. (\ref{schur second eq})
leads to \emph{canonical} presentations of quantum immanants, and, furthermore,  it
 doesn't involve the computation of the irreducible characters of symmetric groups.

\end{abstract}

\textbf{Keyword}:
Young-Capelli bitableaux; Lie superalgebras;  immanants; Capelli determinants;
Capelli immanants; quantum immanants; central elements; combinatorial representation theory.

\tableofcontents


\section{Introduction}

The study of central elements  in $\mathbf{U}(gl(n))$ is a classical subject of the  theory of Lie algebras, see e.g. \cite{DIX-BR};
it is an old and actual one, since it may be regarded as
 an  offspring of the celebrated Capelli identity (\cite{Cap1-BR}, \cite{Cap4-BR}, \cite{Howe-BR}, \cite{HU-BR},
\cite{Procesi-BR}, \cite{Umeda-BR}, \cite{Weyl-BR}),
relates to its modern generalizations and applications (\cite{ABP-BR}, \cite{KostantSahi1-BR},
\cite{KostantSahi2-BR}, \cite{MolevNazarov-BR}, \cite{Nazarov-BR}, \cite{Okounkov-BR}, \cite{Okounkov1-BR},
 \cite{UmedaCent-BR})
as well as to the theory of {\it{Yangians}} (see, e.g.  \cite{Molev1-BR}, \cite{Molev-BR},  \cite{Nazarov2-BR}).

The center $\boldsymbol{\zeta}(n)$ of $\mathbf{U}(gl(n))$ is isomorphic to the algebra $\Lambda^*(n)$ of shifted symmetric polynomials
(e.g., {\it{factorial symmetric functions}}, \cite{BL1-BR},  \cite{CL-BR},  \cite{GH-BR})
via the Harish-Chandra isomorphism $\chi_n$ (see, e.g. \cite{OkOlsh-BR}).
The algebra $\Lambda^*(n)$ admits a quite relevant linear basis, the basis of the \emph{shifted Schur polynomials}
$s*_{\mu|n}$, $\widetilde{\mu}_1 \leq n$, discovered by Sahi \cite {Sahi1-BR},
and extensively studied by Okounkov and Olshanski \cite{OkOlsh-BR}.
\emph{Quantum immanants} are the preimages in $\boldsymbol{\zeta}(n)$
of the shifted Schur polynomials in  $\Lambda^*(n)$ \cite{Okounkov-BR}, \cite{Okounkov1-BR} (see also \cite{Nazarov2-BR}).

We define two set of linear generators in the enveloping algebra $\mathbf{U}(gl(n))$: the set of Young-Capelli bitableaux
and the set of Capelli immanants.
These two sets are obtained as the images of the corresponding  set of  generators in the polynomial algebra ${\mathbb C}[M_{n,n}]$,
 under the so-called
\emph{bitableaux correspondence isomomorphism}, which is an earlier result of the present authors \cite{Brini3-BR}, \cite{Brini4-BR}.

The action of the Capelli immanants on ${\mathbb C}[M_{n,n}]$ can be computed explicitly by using the method
of \emph{virtual variables} (Proposition \ref{column differential} below). Using this computation, we are able to express Okounkov's quantum immanants as  linear combinations of \emph{diagonal} Capelli immanants with explicit coefficients.

Our method heavily relies upon the
``Bitableax correspondence isomorphism/
 Koszul map'' Theorem  (BCK Theorem, for short) \cite{Brini4-BR}
that describes a pair of mutually inverse vector space isomorphisms,
the {\textit{Koszul map}} (\cite{Koszul-BR},  see also \cite{BriUMI-BR} and \cite{BriniTeolisKosz-BR})
$$
\mathcal{K} : \mathbf{U}(gl(n)) \rightarrow {\mathbb C}[x_{ij}] \cong \mathbf{Sym}(gl(n)),
$$
and the {\textit{bitableaux correspondence isomorphism}} (\cite{Brini3-BR}, \cite{Brini4-BR})
$$
\mathcal{K}^{-1} : {\mathbb C}[x_{ij}] \cong \mathbf{Sym}(gl(n)) \rightarrow \mathbf{U}(gl(n)),
$$
that deeply link the enveloping algebra $\mathbf{U}(gl(n))$ of the general linear Lie algebra $gl(n)$ and
the polynomial algebra ${\mathbb C}[M_{n,n}]$ of polynomials in the entries of a ``generic''
square matrix of order $n$. The BCK Theorem has to be regarded as a  sharpened
version of the PBW Theorem for the enveloping algebra $\mathbf{U}(gl(n))$.

The isomorphism $\mathcal{K}^{-1}$ maps a {\textit{(determinantal) bitableau}} $(S|T)$ in ${\mathbb C}[M_{n,n}]$ to the
{\textit{Capelli bitableau}} $[S|T]$ in $\mathbf{U}(gl(n))$
(\cite{Brini3-BR}, \cite{Brini4-BR}, \cite{Bri-BR}; see Section \ref{Capbit} below and Theorem \ref{KBT corr}).
Since the {\textit{standard}}  bitableaux are a basis
of ${\mathbb C}[M_{n,n}]$ (\cite{drs-BR}, \cite{DKR-BR}, \cite{DEP-BR}, \cite{rota-BR};
see subsection \ref{straight} below, Theorem \ref{theorem: standard basis}),
then the {\textit{standard}} Capelli bitableaux
are a  basis of $\mathbf{U}(gl(n))$ \cite{Brini3-BR}.

In the  polynomial algebra ${\mathbb C}[M_{n,n}]$,  {\it{column bitableaux}} are, up to a sign,
{\it{monomials}}. Their images in $\mathbf{U}(gl(n))$ - under  the isomorphism $\mathcal{K}^{-1}$ -
are the {\it{column Capelli bitableaux}} (Section \ref{column Capelli sec} below).

Therefore, column Capelli bitableaux play the same crucial role in $\mathbf{U}(gl(n))$ that monomials play
in ${\mathbb C}[M_{n,n}]$.
Capelli bitableaux and Young-Capelli bitableaux
expand  - up to  a global sign - into column Capelli bitableaux
just in the same way as
bitableaux, right symmetrized bitableaux and immanants expand into the  corresponding monomials
in  ${\mathbb C}[M_{n,n}]$.

The  expressions of column Capelli bitableaux  in $\mathbf{U}(gl(n))$ can be simply computed
(Proposition \ref{column Cap dev} below).
Furthermore, column Capelli bitableaux admit an  elegant and meaningful interpretation as
polynomial differential operators
in the {\textit{Weyl algebra}} associated to
the polynomial algebra ${\mathbb C}[M_{n,d}]$ (Proposition \ref{column differential} below).

The isomorphism $\mathcal{K}^{-1}$ leads to a  natural definition of the \emph{Capelli immanants}
$$
Cimm_{\lambda}[i_1 i_2 \cdots i_h;j_1 j_2 \cdots j_h], \quad \lambda \vdash h
$$
in $\mathbf{U}(gl(n))$ as images under $\mathcal{K}^{-1}$ of the classical \emph{immanants}
$$
imm_{\lambda}(i_1 i_2 \cdots i_h;j_1 j_2 \cdots j_h),
\quad
(i_1 i_2 \cdots i_h), (j_1 j_2 \cdots j_h) \in \underline{n}^h
$$
in the polynomial algebra ${\mathbb C}[M_{n,n}]$
(Littlewood and Richardson \cite{Little1-BR},  see also \cite{Little2-BR},  \cite{goulden-BR}).
Capelli immanants are generalizations of the famous Capelli determinant in $\mathbf{U}(gl(n))$,
just as immanants are generalizations of the determinant in ${\mathbb C}[M_{n,n}]$.

The isomorphism $\mathcal{K}^{-1}$ maps a {\textit{right symmetrized bitableau}} $(S|\fbox{$T$})$ in ${\mathbb C}[M_{n,n}]$
to the {\textit{Young-Capelli bitableau}} $[S|\fbox{$T$}]$ in $\mathbf{U}(gl(n))$
(Section \ref{Capbit} below, and Theorem \ref{image symm}).
Since the standard right symmetrized bitableaux $(S|\fbox{$T$})$ are the {\textit{Gordan-Capelli basis}}
of ${\mathbb C}[M_{n,n}]$ (\cite{Wall-BR}, \cite{Brini1-BR}, \cite{Bri-BR};
see Subsection \ref{symm bit} below, Theorem \ref{GC basis}),
then the standard Young-Capelli bitableaux $[S|\fbox{$T$}]$ are a basis of
$\mathbf{U}(gl(n))$.

Right symmetrized bitableaux $(S|\fbox{$T$})$ of shape $\lambda \vdash h$ expand into  {\textit{immanants}}
defined by the irreducible character $\chi^{\lambda}$  of the symmetric group $\mathbf{S}_h$
associated to the {\textit{same}}  shape $\lambda \vdash h$, and \emph{viceversa} (Propositions \ref{bit vs imm} and \ref{imm vs bit}).
Then, by applying the operator $\mathcal{K}^{-1}$, we obtain that
any Capelli immanant
$$Cimm_{\lambda}[i_1 i_2 \cdots i_h;j_1 j_2 \cdots j_h] $$
can be written as  a linear combination of standard Young-Capelli bitableaux $[U| \fbox{$V$}]$
in $\mathbf{U}(gl(n))$ of the \textbf{same} shape $\lambda$ and \emph{viceversa} (Theorems \ref{Cimm vs YC} and \ref{Cbit vs Cimm} below).

{\textit{Quantum immanants}}   (\cite{Okounkov-BR}, \cite{Okounkov1-BR})
 are proved  to be simple linear combinations of \emph{diagonal} Capelli immanants
 with explicit coefficients (Theorem \ref{quantum}, Eq. (\ref{quantum first})).
This Theorem, in combination with Proposition \ref{column Cap dev},
allows the computation of quantum immanants  to be reduced to a fairly simple process
(see, e.g. Example \ref{Schur example} below).

Quantum immanants can  also be expressed as sums of \emph{double Young-Capelli bitableaux}
(Theorem \ref{quantum second}, Eq. (\ref{schur second eq})). Since double Young-Capelli bitableaux
uniquely expand into linear combinations of \emph{standard} Young-Capelli bitableaux, Eq. (\ref{schur second eq})
leads to \emph{canonical} presentations of quantum immanants, and it
 doesn't involve the irreducible characters of symmetric groups.
Furthermore, Eq.  (\ref{schur second eq}) is better suited to the study of the eigenvalues on irreducible $gl(n)-$modules,
and of the duality in the algebra $\boldsymbol{\zeta}(n)$ (see our preliminary manuscript \cite{BriniTeolis-BR}, Section 4).

\section{The polynomial algebra ${\mathbb C}[M_{n,d}]$}\label{sec 2}

\subsection{Biproducts in ${\mathbb C}[M_{n,d}]$}

As usual, the \textit{algebra of algebraic forms in $n$ vector variables
of dimension $d$} is the polynomial algebra in $n \times d$ (commutative) variables:
$$
{\mathbb C}[M_{n,d}] =    {\mathbb C}[x_{ij}]_{i=1,\ldots,n; j=1,\ldots,d},
$$
and $M_{n,d}$ denotes the  matrix
with $n$ rows and $d$ columns with ``generic" entries $x_{ij}$:
$$
M_{n,d} = \left[ x_{ij} \right]_{i=1,\ldots,n; j=1,\ldots,d}=
 \left[
 \begin{array}{ccc}
 x_{11} & \ldots & x_{1d} \\
 x_{21} & \ldots & x_{2d} \\
 \vdots  &        & \vdots \\
 x_{n1} & \ldots & x_{nd} \\
 \end{array}
 \right].
$$

For the sake of readability, we will write $(i|j)$ in place of $x_{ij},$ and call
the alphabets $L = \{1, 2, \ldots, n \}$ and $P = \{1, 2, \ldots, d \}$
the {\textit{letter}} and the {\textit{place}} alphabets, respectively;
sometimes, we will consistently write ${\mathbb C}[ (i|j) ]_{i = 1, 2, \ldots, n; \ j = 1, 2, \ldots, d}$ in place of  ${\mathbb C}[M_{n,d}]$.

Let $\omega = i_1i_2 \cdots i_p$ be a word on the alphabet $L = \{1, 2, \ldots, n \}$,
and $\varpi = j_1j_1 \cdots j_q$ a word
on the alphabet
$P = \{1, 2, \ldots, d \}$.

Following \cite{rota-BR} and \cite{Brini1-BR},
the {\it{biproduct}} of the two words $\omega$ and $\varpi$
\begin{equation}\label{biproduct}
(\omega|\varpi) = (i_1i_2 \cdots i_p|j_1j_2 \cdots j_q)
\end{equation}
is the element of  ${\mathbb C}[M_{n,d}]$ defined in the following way:

\begin{itemize}

\item [--] If $p = q$, the biproduct $(\omega|\varpi)$
is the \emph{signed minor}
$$
(\omega|\varpi) = (-1)^{{p} \choose {2}} \ det \Big( \ (i_r|j_s) \ \Big)_{r, s = 1, 2, \ldots, p} \in {\mathbb C}[M_{n,d}].
$$

\item [--]
If $p \neq q$, the biproduct $(\omega|\varpi)$ is set to be zero.
\end{itemize}

\subsection{Bitableaux in ${\mathbb C}[M_{n,d}]$}

\subsubsection{Young tableaux}\label{comb Young tab}

 Let $\lambda \vdash h$ be a partition, and label the boxes of its Ferrers diagram
with the numbers $1, 2, \ldots , h$ in the following way:
$$
\begin{array}{lllll}
1 & 2 &  \cdots & \cdots & \lambda_1 \\
\lambda_1 + 1 &  \lambda_1 + 2 & \cdots & \lambda_1 + \lambda_2 &    \\
\cdots & \cdots & \cdots &  &  \\
\cdots & \cdots & h &  &  \\
\end{array}.
$$
A {\textit{Young tableau}} $T$ of shape $\lambda$ over a (finite) alphabet $\mathcal{A}$ is
a map $T : \underline{h} = \{1, 2, \ldots , h \}  \rightarrow \mathcal{A}$; the element $T(i)$
is the symbol in the cell $i$ of the tableau $T$.

The sequences
$$
\begin{array}{l}
T(1)  T(2) \cdots  T(\lambda_1),
\\
T(\lambda_1 + 1)  T(\lambda_1 + 2)  \cdots  T(\lambda_1 + \lambda_2),
\\
 \ldots \ldots
\end{array}
$$
are called the {\textit{row words}} of the Young tableau $T$.

We will also denote a Young tableau
by its sequence of rows words, that is $T = (\omega_1, \omega_2, \ldots, \omega_p)$.
Furthermore, the {\textit{word of the tableau}} $T$ is the concatenation
\begin{equation}\label{word}
w(T) = \omega_1\omega_2 \cdots \omega_p.
\end{equation}

The {\textit{content}} of a tableau $T$ is the function $c_T : \mathcal{A} \rightarrow \mathbb{N}$,
$$
c_T(a) = \sharp \{i \in \underline{h}; \ T(i) = a \}.
$$

A Young tableau $\mathbf{T}$ is said to be {\textit{multilinear}} if   $ \mathcal{A} = \underline{h} $ and the map
$\mathbf{T}$ is a permutation of $\underline{h}$. As usual, $\mathbf{T}^{-1}$ denotes the inverse map.
In the sequel, multilinear Young tableaux will be always denoted  by  bold symbols,
and $\mathbf{T}_0$ will denote the ``identity'' tableau $\mathbf{T}_0(i) = i$, $i = 1, 2, \ldots, h.$

Note that, given any Young tableau $S$ on an alphabet $\mathcal{A}$ and any multilinear Young tableau
$\mathbf{T}$ on the alphabet $\underline{h}$ of the same shape $\lambda \vdash h$, there exists a unique (specialization)  map
$J :  \underline{h}   \rightarrow \mathcal{A}$ such that
$$
S = J \circ {\mathbf{T}},
$$
that is
$
S(i) = (J \circ {\mathbf{T}})(i),  \  i = 1, 2, \ldots, h.
$

To stress this relation between $S$ and $\mathbf{T}$, we  write
\begin{equation}\label{specialization}
S = J_{\mathbf{T}}.
\end{equation}

Given a linear order on the  alphabet $\mathcal{A}$, a Young tableau over $\mathcal{A}$ is said to be
{\textit{(semi)standard}} whenever its rows are increasing from left to right and its columns are non-decreasing
from top to bottom.

\subsubsection{(determinantal) Young bitableaux}

Let $S = (\omega_1, \omega_2, \ldots, \omega_p)$ and $T = (\varpi_1, \varpi_2, \ldots, \varpi_p)$ be Young tableaux on
$L = \{x_1, x_2, \ldots, x_n \}$ and $P = \{1, 2, \ldots, d \}$ of shapes $\lambda$ and $\mu$, respectively.

Following again  \cite{rota-BR} and \cite{Brini1-BR},  the (determinantal) {\it{Young bitableau}}
\begin{equation}\label{bitableaux}
(S|T) =
\left(
\begin{array}{c}
\omega_1\\ \omega_2\\ \vdots\\ \omega_p
\end{array}
\right| \left.
\begin{array}{c}
\varpi_1\\ \varpi_2\\ \vdots\\  \varpi_p
\end{array}
\right)
\end{equation}
is the element of ${\mathbb C}[M_{n,d}]$ defined  in the following way:

\begin{itemize}

\item [--]
If $\lambda = \mu$, the (determinantal) {\it{Young bitableau}} $(S|T)$
is the \emph{signed} product of the biproducts of pairs of corresponding  rows:
\begin{equation}\label{bitableau}
(S|T) =
\pm \ (\omega_1|\varpi_1)(\omega_2|\varpi_2) \cdots (\omega_p|\varpi_p),
\end{equation}
where
\begin{equation}\label{crossing sign}
\pm  = (-1)^{\ell(\omega_2)\ell(\varpi_1)+\ell(\omega_3)(\ell(\varpi_1)+\ell(\varpi_2))
+ \cdots +\ell(\omega_p)(\ell(\varpi_1)+\ell(\varpi_2)+\cdots+\ell(\varpi_{p-1}))},
\end{equation}
and the symbol $\ell(w)$ denotes the length of the word $w$.

\item [--] If $\lambda \neq \mu$, the {\it{Young bitableau}} $(S|T)$ is set to be zero.
\end{itemize}

\subsubsection{Column bitableaux in ${\mathbb C}[M_{n,d}]$}

A {\it{column}} tableau is a Young tableau of shape $\lambda = (1, 1, \ldots, 1) \vdash h$, and the number $h$ of
$1'$s is called the {\it{depth}}.

A column bitableau in ${\mathbb C}[M_{n,d}]$ is a (determinantal) bitableau $(S|T)$, where $S$ and $T$ are column Young
tableaux of the same depth. A column bitableau of depth $h$ equals, up to a sign, a {\it{monomial}} in
${\mathbb C}[M_{n,d}]$:
\begin{equation}\label{column sign}
\left(
\begin{array}{c}
i_1\\  i_2 \\ \vdots \\ i_h
\end{array}
\right| \left.
\begin{array}{c}
j_1\\  j_2 \\ \vdots \\ j_h
\end{array}
\right)
=
(-1)^{h \choose 2} (i_1|j_1)(i_2|j_2) \cdots (i_h|j_h).
\end{equation}

Although the notion of column bitableaux may appear fairly obvious, it will play
a crucial role in the passage from the polynomial algebra ${\mathbb C}[M_{n,d}]$
to the enveloping algebra $\mathbf{U}(gl(n))$ via the \emph{bitableaux correspondence} isomorphism,
Section \ref{sect Kosz} below.

\subsubsection{Bitableaux expansion into column  bitableaux}\label{Bit Exp}

Recall that
$$
(i_1 i_2 \cdots i_h|j_1 j_2 \cdots j_h)
=
(-1)^{h \choose 2}  \ det[ (i_s|j_t) ]_{s,t =1, 2, \ldots, h}
\in {\mathbb C}[M_{n,d}],
$$
and, therefore, the biproduct $(i_1 i_2 \cdots i_h|j_1 j_2 \cdots j_h) \in {\mathbb C}[M_{n,d}]$ expands
into column bitableaux as follows:
$$
(i_1 i_2 \cdots i_h|j_1 j_2 \cdots j_h) =
\sum_{\sigma \in \mathbf{S}_h} \ (-1)^{|\sigma|} \left(
\begin{array}{c}
i_{\sigma(1)}\\  i_{\sigma(2)} \\ \vdots \\ i_{\sigma(h)}
\end{array}
\right| \left.
\begin{array}{c}
j_1\\  j_2 \\ \vdots \\ j_h
\end{array}
\right)
=\sum_{\sigma \in \mathbf{S}_h} \ (-1)^{|\sigma|}  \left(
\begin{array}{c}
i_1\\  i_2 \\ \vdots \\ i_h
\end{array}
\right| \left.
\begin{array}{c}
j_{\sigma(1)}\\  j_{\sigma(2)} \\ \vdots \\ j_{\sigma(h)}
\end{array}
\right).
$$
Notice that, in the passage from monomials to column bitableaux,
the sign $(-1)^{h \choose 2}$ disappears,  due to Eq. (\ref{column sign}).

The preceding arguments extend to  bitableaux of any shape $\lambda, \ \lambda_1 \leq n.$
Given a bitableau $(S|T) \in {\mathbb C}[M_{n,d}]$ of  shape $\lambda = (\lambda_1 \geq \lambda_2 \geq \cdots \geq \lambda_m) \vdash h$
with
$$
S = \left(
\begin{array}{llllllllllllll}
i_{p_1}  \ldots    \ldots     \ldots     i_{p_{\lambda_1}} \\
i_{q_1}   \ldots  \ldots               i_{q_{\lambda_2}} \\
 \ldots  \ldots   \\
i_{r_1} \ldots i_{r_{\lambda_m}}
\end{array}
\right),
\quad
T = \left(
\begin{array}{llllllllllllll}
j_{s_1}  \ldots    \ldots     \ldots     j_{s_{\lambda_1}} \\
j_{t_1}   \ldots  \ldots               j_{t_{\lambda_2}} \\
 \ldots  \ldots   \\
j_{v_1} \ldots j_{v_{\lambda_m}}
\end{array}
\right),
$$
we have
$$
(S|T) =
\sum_{\sigma_1, \ldots, \sigma_m } \ (-1)^{\sum_{k=1}^m \ |\sigma_k|} \
\left(
\begin{array}{c}
i_{p_{\sigma_1(1)}}\\   \vdots \\ i_{p_{\sigma_1(\lambda_1)}} \\
\vdots \\ \vdots  \\
i_{r_{\sigma_m(1)}}\\   \vdots \\ i_{r_{\sigma_m(\lambda_m)}}
\end{array}
\right| \left.
\begin{array}{c}
j_{s_1}\\   \vdots \\ j_{s_{\lambda_1}}  \\
\vdots \\ \vdots \\
j_{v_1}\\   \vdots \\ j_{v_{\lambda_m}}
\end{array}
\right)
$$
$$
\phantom{(S|T)} =
\sum_{\sigma_1, \ldots, \sigma_m } \ (-1)^{\sum_{k=1}^m \ |\sigma_k|} \
\left(
\begin{array}{c}
i_{p_1}\\   \vdots \\ i_{p_{\lambda_1}}  \\
\vdots \\ \vdots \\
i_{r_1}\\   \vdots \\ i_{r_{\lambda_m}}
\end{array}
\right| \left.
\begin{array}{c}
j_{s_{\sigma_1(1)}}\\   \vdots \\ j_{s_{\sigma_1(\lambda_1)}} \\
\vdots \\ \vdots  \\
j_{v_{\sigma_m(1)}}\\   \vdots \\ j_{v_{\sigma_m(\lambda_m)}}
\end{array}
\right)
$$
where the multiple sums range over all permutations
$\sigma_1 \in \mathbf{S}_{\lambda_1}, \ldots, \sigma_m \in \mathbf{S}_{\lambda_m}.$

Notice that only the signs of permutations remain.

\subsubsection{The straightening algorithm and the standard basis of ${\mathbb C}[M_{n,d}]$}\label{straight},

Given a positive integer $h \in \mathbb{Z}^+$,
let ${\mathbb C}_h[M_{n,d}]$ denote the $h-$th homogeneous component of
${\mathbb C}[M_{n,d}].$

Consider the set of all bitableaux $(S|T) \in {\mathbb C}_h[M_{n,d}]$, where $sh(S) = sh(T) \vdash h$.
In the following, let
denote by $\leq$ the linear order on this set defined  by the following two steps:
\begin{itemize}

\item[--]
$(S|T) < (S'|T')$ whenever $sh(S)  <_l sh(S') $, where $<_l$ denotes the lexicographic
order on partitions $\lambda \vdash h.$

\item[--]
$(S|T) < (S'|T')$ whenever $sh(S) =  sh(S')$, $w(S)w(T) >_l w(S')w(T')$.
\end{itemize}
where the shapes and the concatenated words $w(S)w(T),  w(S')w(T')$ of the tableaux $S, T$  and $S', T'$
(see Eq. (\ref{word}))  are compared in the lexicographic order.

The next Theorem is a  well-known result for the polynomial algebra ${\mathbb C}[M_{n,d}]$
(\cite{drs-BR}, \cite{DKR-BR}, \cite{DEP-BR},
for the general theory of standard monomials see, e.g. \cite{Procesi-BR}, Chapt. 13).

\begin{theorem} {\bf{(The Standard basis theorem for ${\mathbb C}_h[M_{n,d}]$)}} \label{theorem: standard basis}
\begin{itemize}
\item[--]
The  set
$$
\{ (S|T) \ standard;\ sh(S) = sh(T) = \lambda \vdash h,    \lambda_1 \leq n, d \ \}.
$$
is a basis of ${\mathbb C}_h[M_{n,d}]$.
\item[--]
Furthermore, a  Young bitableau $(P|Q) \in {\mathbb C}_h[M_{n,d}]$ can be uniquely
written as a linear combination
\begin{equation}\label{straightening}
(P|Q) = \sum_{S,T} a_{S,T}\ (S|T),
\end{equation}
of standard bitableaux $(S|T)$, where
\begin{itemize}
\item[--]
the coefficient $a_{S,T} = 0$ whenever
$(S|T) \ngeq (P|Q)$;
\item[--]
the contents of the tableaux are preserved, that is
$c_S = c_P$, $c_T = c_Q$.
\end{itemize}
\end{itemize}

\end{theorem}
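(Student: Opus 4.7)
My plan is to prove the theorem in two parts: first the spanning and straightening (the algorithmic part), and then linear independence.

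For the spanning/straightening part, I would proceed by induction on the order $\leq$ defined just before the theorem. The base case is a standard bitableau, which is already expressed as itself. For the inductive step, given a non-standard bitableau $(P|Q)$, I need a local rewriting rule that expresses it as a linear combination of strictly greater bitableaux with the same content. The key tool is the classical \emph{Sylvester/Young-Plücker} exchange identity applied to two consecutive rows: if some adjacent pair of rows in $S$ (or in $T$) violates the standardness condition (e.g.\ a strictly decreasing column entry between rows $k$ and $k+1$, or a row not weakly increasing left-to-right), one can expand the determinantal biproducts of those two rows using the Laplace/Plücker relation, obtaining a signed sum of bitableaux in which the offending entries have been shuffled between the two rows. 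A careful bookkeeping of the linear order (lexicographic on shape, then reverse lexicographic on the concatenated word $w(S)w(T)$) shows that every term produced by one such shuffle is \emph{strictly} greater than $(P|Q)$. Iterating terminates because the set of bitableaux of bounded total content is finite and the order is a strict partial order on it. Content is preserved because a Plücker exchange only permutes entries between two rows without creating or destroying symbols.

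For linear independence of the standard bitableaux, the cleanest route is via a dimension comparison. The homogeneous component ${\mathbb C}_h[M_{n,d}]$ decomposes as a $GL_n \times GL_d$-module into the Cauchy-type sum
\begin{equation*}
{\mathbb C}_h[M_{n,d}] \;\cong\; \bigoplus_{\substack{\lambda \vdash h \\ \widetilde{\lambda}_1 \leq \min(n,d)}} V^\lambda_n \otimes V^\lambda_d,
\end{equation*}
whose total dimension equals $\sum_\lambda (\#\{\text{semistandard } S \text{ of shape } \lambda \text{ on } \underline{n}\}) \cdot (\#\{\text{semistandard } T \text{ of shape } \lambda \text{ on } \underline{d}\})$, exactly the cardinality of the proposed standard basis. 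Since the spanning step already exhibits the standard bitableaux as a spanning set whose cardinality matches the dimension, linear independence follows for free. (Alternatively, one can argue directly: suppose $\sum a_{S,T}(S|T) = 0$ is a minimal relation among standard bitableaux; evaluate both sides on a suitable generic diagonal-type matrix substitution that picks out the unique extremal term in the order $\leq$, forcing the corresponding coefficient to vanish, contradicting minimality.)

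Finally, uniqueness of the straightening expansion (\ref{straightening}) and the sharpened statement about the support of the coefficients follow formally: uniqueness from linear independence of the basis, and the support condition from tracking that the straightening algorithm only ever introduces terms strictly greater than $(P|Q)$. Content preservation is inherited from each individual rewriting step.

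The main obstacle, as in all treatments of the straightening law, is the bookkeeping in the spanning step: verifying that the Plücker-type exchange applied at a non-standardness actually produces terms that are \emph{strictly greater} in the chosen order, not merely different. This requires choosing the position of the violation carefully (e.g.\ the lexicographically leftmost offending column) and a somewhat delicate sign and word-order analysis. Once this monotonicity is established, termination and the rest of the theorem follow cleanly, and I would refer to the standard references \cite{drs-BR}, \cite{DKR-BR}, \cite{DEP-BR}, \cite{Procesi-BR} for the full combinatorial verification.
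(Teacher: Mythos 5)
The paper itself gives no proof of this theorem; it simply cites \cite{DKR-BR} and \cite{DEP-BR}, and your outline is indeed the classical straightening-law argument from those sources (spanning by a terminating rewriting procedure that is strictly increasing in the order, linear independence by a cardinality/dimension comparison, uniqueness and the support condition as formal consequences). The linear-independence half of your argument is fine as sketched.

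There is, however, a concrete inaccuracy in your description of the key rewriting step. You assert that the exchange identity ``only permutes entries between two rows without creating or destroying symbols,'' i.e.\ that each local rewriting preserves the shape. That cannot be the whole story: the straightening relations necessarily produce terms of \emph{strictly larger shape}, and this is exactly why the order $\leq$ compares shapes lexicographically before comparing words. A minimal example: take $n=d=2$ and the shape-$(1,1)$ bitableau with $S$ having rows $2,1$ and $T$ having rows $1,2$, so that $(S|T)=-x_{21}x_{12}$. The only standard bitableau of shape $(1,1)$ with the same contents is the one with both columns equal to $\genfrac{}{}{0pt}{}{1}{2}$, which equals $-x_{11}x_{22}$; these two polynomials are linearly independent, so no shape-preserving relation can straighten $(S|T)$. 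The correct expansion is
\begin{equation*}
(S|T) \;=\; \left(\left.\genfrac{}{}{0pt}{}{1}{2}\,\right|\genfrac{}{}{0pt}{}{1}{2}\right) \;-\; (12|12),
\end{equation*}
and the shape-$(2)$ term is unavoidable. In the references the quadratic (Pl\"ucker/Laplace) relations are stated as congruences modulo the span of bitableaux of strictly larger shape, so each rewriting step produces shuffled terms of the same shape that are strictly later in the word order \emph{plus} terms of strictly larger shape; both kinds are strictly greater in the order $\leq$ of the theorem, and termination then follows as you say. As literally described, your local rule would fail already on this $2\times 2$ example, so the monotonicity bookkeeping you defer to \cite{drs-BR}, \cite{DKR-BR}, \cite{DEP-BR} must include these shape-raising terms; once that is corrected, the rest of your argument goes through.
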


For a proof, see e.g. \cite{DKR-BR}, \cite{DEP-BR}.

\subsection{Right symmetrized  bitableaux and the Gordan-Capelli basis of ${\mathbb C}[M_{n,d}]$}\label{symm bit}

Given a Young tableau $T$, we say that another tableau $\overline{T}$ is a
{\textit{column permuted}} of $T$ whenever each column of $\overline{T}$  can
obtained by permuting the corresponding column of $T$.

A {\textit{right symmetrized bitableau}} $(S|\fbox{$T$})$ is the element of the
polynomial algebra ${\mathbb C}[M_{n,d}]$ defined as the following sum of bitableaux:
$$
(S|\fbox{$T$}) = \sum_{\overline{T}} \ (S|\overline{T}),
$$
where the sum is extended over {\textit{all}} $\overline{T}$ column permuted of $T$ (hence, repeated entries in
a column give rise to multiplicities).

\begin{example}\label{ex symm}
\begin{align*}
\left(
\begin{array}{cc}
 1 & 3 \\  2 & 4
\end{array}
\right| \left.
\fbox{$
\begin{array}{cc}
1 & 2 \\ 1 & 3
\end{array}
$} \
\right)
& =
\left(
\begin{array}{cc}
 1 & 3 \\  2 & 4
\end{array}
\right| \left.
\begin{array}{cc}
1 & 2 \\ 1 & 3
\end{array}
\right)
+
\left(
\begin{array}{cc}
 1 & 3 \\  2 & 4
\end{array}
\right| \left.
\begin{array}{cc}
1 & 2 \\ 1 & 3
\end{array}
\right)
\\
&  +
\left(
\begin{array}{cc}
 1 & 3 \\  2 & 4
\end{array}
\right| \left.
\begin{array}{cc}
1 & 3 \\ 1 & 2
\end{array}
\right)
+
\left(
\begin{array}{cc}
 1 & 3 \\  2 & 4
\end{array}
\right| \left.
\begin{array}{cc}
1 & 3 \\ 1 & 2
\end{array}
\right)
\\
& =
2  \left(
\begin{array}{cc}
 1 & 3 \\  2 & 4
\end{array}
\right| \left.
\begin{array}{cc}
1 & 2 \\ 1 & 3
\end{array}
\right)
+
2  \left(
\begin{array}{cc}
 1 & 3 \\  2 & 4
\end{array}
\right| \left.
\begin{array}{cc}
1 & 3 \\ 1 & 2
\end{array}
\right).
\end{align*}
\end{example}\qed

We recall a fundamental result:

\begin{theorem}{\bf{(The Gordan-Capelli basis of ${\mathbb C}[M_{n,d}]$})}\label{GC basis}
Let $h \in {\mathbb N}.$
\begin{itemize}
\item [--]
The set
$$
\{ (S|\fbox{$T$}); S, T \ standard, \ sh(S) = sh(T) = \lambda \vdash h, \ \lambda_1 \leq n, d \}
$$
is a basis of ${\mathbb C}_h[M_{n,d}]$.
\item [--]
Any right symmetrized bitableau $(U|\fbox{$V$})$, $sh(U) = sh(V) = \lambda \vdash h$,
(uniquely) expands into a linear combination of right symmetrized bitableau $(S|\fbox{$T$})$, $S, T$ standard
of the \textbf{same} shape $\lambda = sh(S) = sh(T)$.
\item [--]
Let  $(U|\fbox{$V$})$, $sh(U) = sh(V) = \lambda \vdash h$, $\lambda_1 \nleq n, d$. Then
$$
(U|\fbox{$V$}) = 0.
$$
\end{itemize}
\end{theorem}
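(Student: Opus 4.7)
The plan is to deduce the three parts of the theorem from the standard basis theorem (Theorem \ref{theorem: standard basis}), via a combination of shape-tracking through the straightening algorithm, a triangularity argument, and a dimension count from the Cauchy identity.

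First, I would dispose of part (iii), the vanishing for inadmissible shapes. If $\lambda_1 > n$ or $\lambda_1 > d$, then every bitableau $(U|\overline{V})$ appearing in the sum $(U|\fbox{$V$}) = \sum_{\overline{V}}(U|\overline{V})$ has a row of $U$ (respectively of $\overline{V}$) of length $\lambda_1$ that necessarily contains a repeated letter from $\underline{n}$ (respectively a repeated place from $\underline{d}$). By the definition of the biproduct as a signed determinant, the corresponding row factor vanishes, killing each summand and forcing $(U|\fbox{$V$}) = 0$.

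For admissible shapes I would establish parts (i) and (ii) simultaneously. Applying the straightening theorem to each $(U|\overline{V})$ gives a content-preserving expansion
$$
(U|\fbox{$V$}) = \sum_{(S,T) \text{ standard}} a_{S,T}\, (S|T),
$$
where a priori the shapes $sh(S) = sh(T)$ only satisfy $sh(S) \geq_l \lambda$. The crucial step is to show that in fact only shape equal to $\lambda$ contributes. The argument reinterprets the column symmetrization $\fbox{$V$}$ as a Young-type projector in the group algebra of $\mathbf{S}_h$ acting on the place indices; combining the determinantal antisymmetry of the biproducts with this column symmetrization (the classical Gordan--Capelli analysis) projects out all isotypic components except that of shape $\lambda$. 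Once this is in hand, a unitriangular change of basis from $\{(S|T)\}$ to $\{(S|\fbox{$T$})\}$ on the set of standard pairs of shape $\lambda$ --- the identity column permutation contributes $(S|T)$ itself, while the remaining permutations produce terms that, when restandardized, are lex-smaller in the second tableau --- allows the expansion to be inverted, re-expressing $(U|\fbox{$V$})$ uniquely in standard right symmetrized bitableaux of shape exactly $\lambda$, proving (ii).

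For the basis claim (i), I would combine spanning (which follows from (ii) applied to any element of ${\mathbb C}_h[M_{n,d}]$ first expanded in the standard basis) with a dimension count: the number of pairs of standard tableaux $(S,T)$ with $sh(S) = sh(T) = \lambda$ admissible equals $\dim {\mathbb C}_h[M_{n,d}]$ by the Cauchy identity $\prod_{i,j}(1-x_iy_j)^{-1} = \sum_\lambda s_\lambda(x)\,s_\lambda(y)$ read off at bidegree $h$. Linear independence then follows from the unitriangularity of the transition to the standard basis.

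The main obstacle will be the shape-preservation claim in the second paragraph: controlling how the column symmetrization $\fbox{$V$}$ interacts with the antisymmetry built into the biproducts so as to eliminate shapes strictly $>_l \lambda$. This is the classical substance of the Gordan--Capelli theorem and amounts to a Garnir-style identity together with an isotypic projection argument in $\mathbf{S}_h$. The remainder of the argument is essentially bookkeeping given Theorem \ref{theorem: standard basis} and the Cauchy identity.
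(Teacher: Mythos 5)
The paper itself gives no proof of Theorem \ref{GC basis}: it is quoted as a known result, with the proof deferred to Wallace \cite{Wall-BR} and to \cite{Brini1-BR}. So the only question is whether your outline is sound, and it is not: the pivotal intermediate claim in your second paragraph is false. You assert that, after straightening each summand of $(U|\fbox{$V$})$, ``only shape equal to $\lambda$ contributes'' to the expansion into standard determinantal bitableaux $(S|T)$, because the column symmetrization acts as the projector onto the $\lambda$-isotypic component. The isotypic statement is correct (it follows from Proposition \ref{multilinear}), but the span of the standard determinantal bitableaux of shape exactly $\lambda$ is \emph{not} the $\lambda$-isotypic component: the standard basis is compatible only with the filtration by shapes $\geq_l \lambda$, not with the isotypic grading. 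Concretely, take $n=d=2$, $h=2$, $\lambda=(1,1)$, and $U=V$ the column with entries $1,2$. Then
$$
(U|\fbox{$V$}) \;=\; -(1|1)(2|2)-(1|2)(2|1) \;=\; -\,(12|12)\;+\;2\,\bigl(\begin{smallmatrix}1\\2\end{smallmatrix}\big|\begin{smallmatrix}1\\2\end{smallmatrix}\bigr),
$$
so the standard bitableau $(12|12)$ of the strictly larger shape $(2)$ occurs with nonzero coefficient. (With the paper's conventions the column shape carries the trivial representation, so this $(U|\fbox{$V$})$ is essentially a permanent, which is visibly not supported on the shape-$(1,1)$ standard determinantal bitableaux alone.)

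The error propagates: once the expansion of $(U|\fbox{$V$})$ is not confined to shape-$\lambda$ standard bitableaux, your ``unitriangular change of basis on the set of standard pairs of shape $\lambda$'' no longer inverts to give statement (ii), and the linear-independence half of (i) collapses with it. The object that does coincide with the $\lambda$-isotypic component ${\mathbb C}_h^\lambda[M_{n,d}]$ is the span of the right symmetrized bitableaux of shape $\lambda$ themselves; showing that the \emph{standard} ones among these are linearly independent is precisely the nontrivial content of the theorem, and in the cited sources it is obtained not from the straightening theorem alone but from a triangular, nondegenerate pairing against Young(--Capelli) symmetrizers (cf.\ Remark \ref{action} and \cite{Brini2-BR}), or by Wallace's original argument. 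Your part (iii) is fine, and so is the dimension count via the Cauchy identity (equivalently, via equicardinality with the standard basis of Theorem \ref{theorem: standard basis}); what is missing is a correct proof of spanning and independence for the admissible shapes.
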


\begin{corollary}\label{GC decomposition}
The subspace ${\mathbb C}_h[M_{n,d}]$ decomposes as:
\begin{equation}\label{eq decomposition}
{\mathbb C}_h[M_{n,d}] = \bigoplus_{\lambda \vdash h} \ {\mathbb C}_h^{\lambda}[M_{n,d}], \quad \lambda_1 \leq n, d,
\end{equation}
where ${\mathbb C}_h^{\lambda}[M_{n,d}]$ is the subspace spanned by the right symmetrized bitableaux
$(U|\fbox{$V$})$ of shape $\lambda = sh(U) = sh(V)$.
\end{corollary}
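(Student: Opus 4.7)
The plan is to deduce the corollary directly from the three assertions of Theorem \ref{GC basis}. The proof has no serious obstacle; it is essentially a bookkeeping exercise of partitioning the Gordan-Capelli basis by shape and checking that the resulting subspaces match the stated definition of ${\mathbb C}_h^{\lambda}[M_{n,d}]$.

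First I would introduce, for each $\lambda \vdash h$ with $\lambda_1 \leq n, d$, the set
$$
\mathcal{B}^{\lambda} = \{\, (S|\fbox{$T$}) : S, T \text{ standard},\ sh(S) = sh(T) = \lambda \,\}.
$$
By the first bullet of Theorem \ref{GC basis}, the disjoint union $\bigsqcup_{\lambda \vdash h,\ \lambda_1 \leq n,d} \mathcal{B}^{\lambda}$ is a basis of ${\mathbb C}_h[M_{n,d}]$. Setting $W^{\lambda} = \mathrm{span}(\mathcal{B}^{\lambda})$, one immediately gets the direct sum decomposition
$$
{\mathbb C}_h[M_{n,d}] = \bigoplus_{\lambda \vdash h,\ \lambda_1 \leq n,d} W^{\lambda}.
$$

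Next I would identify $W^{\lambda}$ with ${\mathbb C}_h^{\lambda}[M_{n,d}]$ as defined in the statement, i.e.\ the span of \textbf{all} right symmetrized bitableaux (not only the standard ones) of shape $\lambda$. The inclusion $W^{\lambda} \subseteq {\mathbb C}_h^{\lambda}[M_{n,d}]$ is trivial since each element of $\mathcal{B}^{\lambda}$ is by construction a right symmetrized bitableau of shape $\lambda$. For the reverse inclusion I would invoke the second bullet of Theorem \ref{GC basis}: any right symmetrized bitableau $(U|\fbox{$V$})$ with $sh(U) = sh(V) = \lambda$ expands uniquely into a linear combination of standard right symmetrized bitableaux of the \emph{same} shape $\lambda$, hence lies in $W^{\lambda}$.

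Finally, I would dispose of the shapes $\lambda$ violating $\lambda_1 \leq n, d$: by the third bullet of Theorem \ref{GC basis}, every right symmetrized bitableau of such a shape is zero, so ${\mathbb C}_h^{\lambda}[M_{n,d}] = 0$ in those cases and the sum in (\ref{eq decomposition}) can equivalently be restricted to $\lambda_1 \leq n, d$, as stated. Combining the three steps yields the claimed decomposition.
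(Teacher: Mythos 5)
Your proof is correct and is precisely the argument the paper intends: the corollary is stated without proof as an immediate consequence of Theorem \ref{GC basis}, and your three steps (direct sum from the shape-partitioned standard basis, identification of each summand with the full span via the same-shape expansion, and elimination of shapes with $\lambda_1 \nleq n,d$ via the vanishing statement) spell out exactly that deduction.
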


Theorem \ref{GC basis}  was proved, in a different language, by Wallace \cite{Wall-BR} in the  classical commutative case.
A superalgebraic version of this result was proved by the present authors in \cite{Brini1-BR};
for a more detailed discussion, see  \cite{Bri-BR}.

\subsection{Right symmetrized bitableaux in ${\mathbb C}_h[M_{n,d}]$, Young symmetrizers
and the natural units in the group algebra $\mathbb{C}[\mathbf{S}_h]$}

In this subsection, we summarize some basic notions from  the representation theory of the symmetric group;
furthermore, we provide useful descriptions of
{\textit{right symmetrized determinantal bitableaux}} in terms of \emph{Young symmetrizers}
and of the \emph{natural units} in the group algebra $\mathbb{C}[\mathbf{S}_h]$.

\subsubsection{The symmetric group $\mathbf{S}_h$}\label{symmgroup}

Our main reference here is the treatise of James and Kerber \cite{JK}, Chapter $3$, with the proviso
that here the \emph{role of rows and columns of a Young tableau are interchanged}.

Given a pair
$\mathbf{S}, \mathbf{T}$ of multilinear  tableaux of the same
shape $sh(\mathbf{S}) = sh(\mathbf{T}) = \lambda \vdash h$, the {\textit{Young symmetrizer}}
$\mathbf{e}^{\lambda}_{\mathbf{S}\mathbf{T}} \in \mathbb{C}[\mathbf{S}_h]$ is the element:
\begin{equation}\label{Young symmetrizers}
\mathbf{e}^{\lambda}_{\mathbf{S}\mathbf{T}} = \sum_{\sigma \in R(\mathbf{S}), \tau \in C(\mathbf{T})} \
(-1)^{|\sigma|} \ \sigma \ \theta_{\mathbf{S}\mathbf{T}} \ \tau,
\end{equation}
where $\theta_{\mathbf{S}\mathbf{T}}$ is the permutation of $\underline{h}$ such that
$\theta_{\mathbf{S}\mathbf{T}}(i) = (\mathbf{S} \circ \mathbf{T}^{-1})(i) =
\mathbf{S} \big(\mathbf{T}^{-1}(i) \big)$ for every $i = 1, 2, \ldots, h,$ and
$R(\mathbf{S}), \ C(\mathbf{T}) \subseteq \mathbf{S}_h$ are the \emph{row subgroup} of $\mathbf{S}$ and
the \emph{column subgroup} of $\mathbf{T}$, respectively.

Clearly,
$$
\mathbf{e}^{\lambda}_{\mathbf{S}\mathbf{T}} = \theta_{\mathbf{S}\mathbf{T}} \ \mathbf{e}^{\lambda}_{\mathbf{T}\mathbf{T}} =
\mathbf{e}^{\lambda}_{\mathbf{S}\mathbf{S}} \ \theta_{\mathbf{S}\mathbf{T}}.
$$

\begin{remark}\label{conj remark}
By \emph{Eq. (\ref{Young symmetrizers})}, the \emph{trivial representation} is \emph{here} associated to the \emph{column} shape
$\lambda = (1^h)$ and the \emph{sign representation} is \emph{here} associated to the \emph{row} shape
$\lambda = (h).$ \qed
\end{remark}

 We will denote by $\mathbf{\gamma}^{\lambda}_{\mathbf{S}\mathbf{T}}$ the  {\textit{natural units}}
of the group algebra $\mathbb{C}[\mathbf{S}_h]$, $\lambda \vdash h$, $\mathbf{S},  \mathbf{T}$ multilinear
standard tableaux of the same shape $sh(\mathbf{S}) = sh(\mathbf{T}) = \lambda \vdash h.$

Given $\lambda \vdash h,$ recall that
$$
\mathbb{C}[\mathbf{S}_h] = \bigoplus_{\lambda \vdash h} \ \mathbb{C}^{\lambda}[\mathbf{S}_h],
$$
where $\mathbb{C}^{\lambda}[\mathbf{S}_h]$ denotes the {\textit{isotypic}} (simple) component
of $\mathbb{C}[\mathbf{S}_h]$ associated to $\lambda$.

\begin{proposition}\label{symm repr}

\

\begin{enumerate}

\item [1)] The set
$$
\Big\{ \mathbf{e}^{\lambda}_{\mathbf{S}\mathbf{T}}; \mathbf{S}, \mathbf{T} \ multilinear \ standard \ tableaux,
sh(\mathbf{S}) = sh(\mathbf{T}) = \lambda \vdash h  \Big\}
$$
is a basis of $\mathbb{C}^{\lambda}[\mathbf{S}_h]$.

\item [2)] The set
$$
\Big\{ \mathbf{\gamma}^{\lambda}_{\mathbf{S}\mathbf{T}}; \mathbf{S}, \mathbf{T} \ multilinear \ standard \ tableaux,
sh(\mathbf{S}) = sh(\mathbf{T}) = \lambda \vdash h  \Big\}
$$
is a basis of $\mathbb{C}^{\lambda}[\mathbf{S}_h]$.

\item [3)]
Let $\mathbf{S}, \mathbf{S'}, \mathbf{T}, \mathbf{T'}$ be multilinear standard tableaux of shape $\lambda \vdash h$, then
$$
\mathbf{\gamma}^{\lambda}_{\mathbf{S}\mathbf{T}} \ \mathbf{\gamma}^{\lambda}_{\mathbf{S'}\mathbf{T'}} =
\delta_{\mathbf{T}, \mathbf{S'}}  \    \mathbf{\gamma}^{\lambda}_{\mathbf{S}\mathbf{T'}},
$$
$$
\mathbf{\gamma}^{\lambda}_{\mathbf{S}\mathbf{T}} \ \mathbf{e}^{\lambda}_{\mathbf{S'}\mathbf{T'}} =
\delta_{\mathbf{T}, \mathbf{S'}}  \    \mathbf{e}^{\lambda}_{\mathbf{S}\mathbf{T'}}.
$$
\end{enumerate}
\end{proposition}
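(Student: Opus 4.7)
My plan is to deduce all three statements from the classical Wedderburn decomposition of $\mathbb{C}[\mathbf{S}_h]$ together with the standard properties of Young symmetrizers, essentially translating the treatment in James--Kerber, Chapter 3. By Maschke's theorem, $\mathbb{C}[\mathbf{S}_h] = \bigoplus_{\lambda \vdash h} \mathbb{C}^{\lambda}[\mathbf{S}_h]$, where each isotypic component is a full matrix algebra of dimension $(f^{\lambda})^2$, with $f^{\lambda}$ the number of multilinear standard tableaux of shape $\lambda$. The factorization $\mathbf{e}^{\lambda}_{\mathbf{S}\mathbf{T}} = \theta_{\mathbf{S}\mathbf{T}} \mathbf{e}^{\lambda}_{\mathbf{T}\mathbf{T}} = \mathbf{e}^{\lambda}_{\mathbf{S}\mathbf{S}} \theta_{\mathbf{S}\mathbf{T}}$, together with the fact that $\mathbf{e}^{\lambda}_{\mathbf{T}\mathbf{T}}$ is a quasi-idempotent in $\mathbb{C}^{\lambda}[\mathbf{S}_h]$, immediately places every Young symmetrizer $\mathbf{e}^{\lambda}_{\mathbf{S}\mathbf{T}}$ inside the corresponding isotypic component.

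For part (1), I would combine a dimension count with the classical multiplication rule
\[
\mathbf{e}^{\lambda}_{\mathbf{S}\mathbf{T}} \cdot \mathbf{e}^{\lambda}_{\mathbf{S'}\mathbf{T'}} = \kappa^{\lambda}_{\mathbf{T},\mathbf{S'}}\, \mathbf{e}^{\lambda}_{\mathbf{S}\mathbf{T'}},
\]
where $\kappa^{\lambda}_{\mathbf{T},\mathbf{T}} = n_{\lambda} = h!/f^{\lambda} \neq 0$. A hypothetical linear dependence $\sum a_{\mathbf{S}\mathbf{T}} \mathbf{e}^{\lambda}_{\mathbf{S}\mathbf{T}} = 0$ among standard pairs can be pinned down by left- and right-multiplying with suitable diagonal Young symmetrizers; combined with the fact that the number of pairs equals $(f^{\lambda})^2 = \dim \mathbb{C}^{\lambda}[\mathbf{S}_h]$, this yields both spanning and linear independence.

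For part (2), I would introduce the natural units $\mathbf{\gamma}^{\lambda}_{\mathbf{S}\mathbf{T}}$ through the James--Kerber inductive construction: the diagonal elements $\mathbf{\gamma}^{\lambda}_{\mathbf{T}\mathbf{T}}$ are obtained from the $\mathbf{e}^{\lambda}_{\mathbf{T}\mathbf{T}}$ by a Gram--Schmidt-style orthogonalization with respect to the standard-tableaux ordering, producing a complete system of primitive orthogonal idempotents in $\mathbb{C}^{\lambda}[\mathbf{S}_h]$; the off-diagonal ones are then defined, up to normalization, by $\mathbf{\gamma}^{\lambda}_{\mathbf{S}\mathbf{T}} = \mathbf{\gamma}^{\lambda}_{\mathbf{S}\mathbf{S}} \, \theta_{\mathbf{S}\mathbf{T}} \, \mathbf{\gamma}^{\lambda}_{\mathbf{T}\mathbf{T}}$. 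The change-of-basis matrix between $\{\mathbf{e}^{\lambda}_{\mathbf{S}\mathbf{T}}\}$ and $\{\mathbf{\gamma}^{\lambda}_{\mathbf{S}\mathbf{T}}\}$ is triangular with nonzero diagonal in this order, so part (2) follows from part (1).

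For part (3), the multiplication rule $\mathbf{\gamma}^{\lambda}_{\mathbf{S}\mathbf{T}} \mathbf{\gamma}^{\lambda}_{\mathbf{S'}\mathbf{T'}} = \delta_{\mathbf{T},\mathbf{S'}} \mathbf{\gamma}^{\lambda}_{\mathbf{S}\mathbf{T'}}$ is precisely the defining property of a complete system of matrix units, which the orthogonalization in part (2) produces by design; I would verify it by expanding and applying the diagonal idempotent/orthogonality relations. The identity $\mathbf{\gamma}^{\lambda}_{\mathbf{S}\mathbf{T}} \mathbf{e}^{\lambda}_{\mathbf{S'}\mathbf{T'}} = \delta_{\mathbf{T},\mathbf{S'}} \mathbf{e}^{\lambda}_{\mathbf{S}\mathbf{T'}}$ then drops out by expanding $\mathbf{e}^{\lambda}_{\mathbf{S'}\mathbf{T'}}$ in the $\mathbf{\gamma}$-basis and using the first identity, together with the factorization $\mathbf{e}^{\lambda}_{\mathbf{S'}\mathbf{T'}} = \theta_{\mathbf{S'}\mathbf{T'}} \mathbf{e}^{\lambda}_{\mathbf{T'}\mathbf{T'}}$ to track the indices. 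The only delicate step is the orthogonalization procedure in part (2), but this is entirely classical and is exactly the content of James--Kerber, Chapter 3, so in the final write-up I would keep the discussion concise and cite that source.
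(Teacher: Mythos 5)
The paper offers no proof of this proposition: it is stated as classical background with an explicit pointer to James and Kerber, Chapter 3 (with rows and columns interchanged), and your sketch is precisely that classical Wedderburn/matrix-unit argument, so it matches the paper's (implicit) approach. The outline is sound; the only steps worth spelling out in a final write-up are that the multiplication rule $\mathbf{e}^{\lambda}_{\mathbf{S}\mathbf{T}}\,\mathbf{e}^{\lambda}_{\mathbf{S'}\mathbf{T'}}=\kappa^{\lambda}_{\mathbf{T},\mathbf{S'}}\,\mathbf{e}^{\lambda}_{\mathbf{S}\mathbf{T'}}$ itself rests on the one-dimensionality of $\mathbf{e}^{\lambda}_{\mathbf{S}\mathbf{S}}\,\mathbb{C}[\mathbf{S}_h]\,\mathbf{e}^{\lambda}_{\mathbf{T'}\mathbf{T'}}$ (von Neumann's lemma), and that isolating coefficients in a hypothetical dependence relation requires the triangularity of the matrix $\big(\kappa^{\lambda}_{\mathbf{S},\mathbf{T}}\big)$ with respect to the standard-tableaux order, not merely the nonvanishing of $\kappa^{\lambda}_{\mathbf{T},\mathbf{T}}=h!/f^{\lambda}$.
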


Let $\lambda \vdash h$ be a partition and denote by $\chi^{\lambda}$ the {\textit{irreducible character}} associated
to the irreducible representation of shape $\lambda$ of the symmetric group $\mathbf{S}_h$.
Let
\begin{equation}\label{character}
\overline{\chi}_{\lambda} = \sum_{\sigma \in \mathbf{S}_h} \ \chi^{\lambda}(\sigma) \sigma \in {\mathbb C}[\mathbf{S}_h].
\end{equation}

\begin{proposition}

\

\begin{enumerate}

\item [1)]
The elements
$$
\frac {\chi^{\lambda}(I)} {n!} \ \overline{\chi}_{\lambda}, \quad \lambda \vdash h
$$
are the \emph{primitive central idempotents} of  ${\mathbb C}[\mathbf{S}_h]$.

\item [2)]
We have:
$$\label{basic identity}
\frac {\chi^{\lambda}(I)} {n!} \ \overline{\chi}_{\lambda} =
\sum_{\mathbf{T}} \ \mathbf{\gamma}^{\lambda}_{\mathbf{T} \mathbf{T}},
$$
where the sum  ranges over all multilinear $\mathbf{T}$
standard tableaux on $\underline{h}$ of shape $\lambda$,
and
$$
\frac {\chi^{\lambda}(I)} {n!} = \frac {1} {H(\lambda)},
$$
$H(\lambda)$
the {\textit{hook number}} of the partition $\lambda.$

\item [3)]
The elements
$$
\frac {\chi^{\lambda}(I)} {n!} \ \overline{\chi}_{\lambda}, \quad \lambda \vdash h
$$
are the \emph{projectors} from ${\mathbb C}[\mathbf{S}_h]$ to the isotypic (simple)
components $\mathbb{C}^{\lambda}[\mathbf{S}_h]$.
\end{enumerate}
\end{proposition}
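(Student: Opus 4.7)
My plan is to prove the three parts in the order $(1) \Rightarrow (2) \Rightarrow (3)$, relying on Proposition \ref{symm repr} together with standard facts from the character theory of finite groups. The structure of the argument is: first identify the primitive central idempotents abstractly; then show that the concrete expressions $\sum_{\mathbf{T}} \gamma^\lambda_{\mathbf{T}\mathbf{T}}$ coincide with them; then read off the projector property.

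For part $(1)$, I would appeal to the classical identification of the primitive central idempotents in the group algebra of a finite group $G$: they are given by $\frac{\dim V_\lambda}{|G|} \sum_{g \in G} \chi^\lambda(g^{-1})\, g$. Specializing to $G = \mathbf{S}_h$, using that every permutation is conjugate to its inverse (so $\chi^\lambda(\sigma^{-1}) = \chi^\lambda(\sigma)$), and recalling $\dim V_\lambda = \chi^\lambda(I)$, one arrives at the expression $\frac{\chi^\lambda(I)}{h!}\, \overline{\chi}_\lambda$ as defined in Eq. (\ref{character}).

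For part $(2)$, set $e := \sum_{\mathbf{T}} \gamma^\lambda_{\mathbf{T}\mathbf{T}}$, the sum ranging over all multilinear standard tableaux of shape $\lambda$. Using the multiplication rule of Proposition \ref{symm repr}(3), a direct calculation yields $e \cdot e = e$ and $e \cdot \gamma^\lambda_{\mathbf{S}\mathbf{T}'} = \gamma^\lambda_{\mathbf{S}\mathbf{T}'} = \gamma^\lambda_{\mathbf{S}\mathbf{T}'} \cdot e$ for any standard pair $\mathbf{S}, \mathbf{T}'$ of shape $\lambda$. Since by Proposition \ref{symm repr}(2) the $\gamma^\lambda_{\mathbf{S}\mathbf{T}'}$ span $\mathbb{C}^\lambda[\mathbf{S}_h]$, the element $e$ acts as a two-sided identity on this isotypic component. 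The Wedderburn decomposition $\mathbb{C}[\mathbf{S}_h] = \bigoplus_{\mu \vdash h} \mathbb{C}^\mu[\mathbf{S}_h]$ into two-sided ideals with pairwise zero product then forces $e$ to annihilate $\mathbb{C}^\mu[\mathbf{S}_h]$ for every $\mu \ne \lambda$. Hence $e$ commutes with every element of $\mathbb{C}[\mathbf{S}_h]$, so it is a central idempotent and is precisely the identity of the simple two-sided ideal $\mathbb{C}^\lambda[\mathbf{S}_h]$. By uniqueness of the identity of a simple direct summand, $e$ must coincide with $\frac{\chi^\lambda(I)}{h!}\, \overline{\chi}_\lambda$ from part $(1)$. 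The numerical equality $\chi^\lambda(I)/h! = 1/H(\lambda)$ is exactly the Frame--Robinson--Thrall hook length formula for the dimension of the irreducible $\mathbf{S}_h$-module $V_\lambda$.

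Part $(3)$ then falls out immediately: knowing that $\frac{\chi^\lambda(I)}{h!}\, \overline{\chi}_\lambda$ is the identity of the two-sided ideal $\mathbb{C}^\lambda[\mathbf{S}_h]$ and annihilates every complementary isotypic component, left multiplication by this element is precisely the projection of $\mathbb{C}[\mathbf{S}_h]$ onto $\mathbb{C}^\lambda[\mathbf{S}_h]$ along $\bigoplus_{\mu \ne \lambda} \mathbb{C}^\mu[\mathbf{S}_h]$. The main subtlety in the argument is the bootstrap from the \emph{within-shape} multiplication rule of Proposition \ref{symm repr}(3) to the \emph{cross-shape} vanishing needed for centrality; this is not handled by direct computation with Young symmetrizers, but by invoking the abstract fact that the isotypic components are orthogonal two-sided ideals, a point that one could alternatively prove by tracing through the definition of $\gamma^\lambda_{\mathbf{S}\mathbf{T}}$ in terms of $\mathbf{e}^\lambda_{\mathbf{S}\mathbf{T}}$ and the standard orthogonality of Young symmetrizers of different shapes.
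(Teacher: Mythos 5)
Your proof is correct and follows exactly the route the paper indicates: assertion (1) is the standard identification of primitive central idempotents, and (2) and (3) are derived from (1) together with the multiplication rules and spanning property of the natural units in Proposition \ref{symm repr}, with the hook length formula supplying the numerical identity. You have merely filled in the details the paper leaves implicit (and silently corrected the paper's $n!$ to the intended $h!$ for the group $\mathbf{S}_h$), so no further comment is needed.
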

\begin{proof}
Assertion $1)$ is an instance of a standard fact of the representation theory of finite groups.
Assertions $2)$ and $3)$ follow from assertion $1)$ and Proposition
\ref{symm repr}.
\end{proof}

\subsubsection{Right symmetrized bitableaux and Young symmetrizers: the multilinear case}

We consider the algebra ${\mathbb C}_h[M_{h,h}]$, that is $n= d =h$, the polynomial algebra generated
by the variables $(i|j)$, $i, j =1, 2, \ldots, h$.

We establish the following convention. Given an element
$$
\mathbf{p} = \sum _s \ c_s \sigma_s \in \mathbb{C}[\mathbf{S}_h],
$$
and a column tableau
$$
\left(
\begin{array}{c}
I (1) \\\   \vdots \\ I (h)
\end{array}
\right| \left.
\begin{array}{c}
 J(1)\\   \vdots \\  J(h)
\end{array}
\right),
$$
we set
\begin{equation}\label{sum convention}
\left(
\begin{array}{c}
I \big( \mathbf{p} (1) \big) \\   \vdots \\ I \big( \mathbf{p} (h) \big)
\end{array}
\right| \left.
\begin{array}{c}
 J(1)\\   \vdots \\  J(h)
\end{array}
\right)
=
\sum_s \ c_s
\left(
\begin{array}{c}
I \big( \sigma_s (1) \big)\\   \vdots
\\ I \big( \sigma_s (h) \big)
\end{array}
\right| \left.
\begin{array}{c}
 J(1)\\   \vdots \\  J(h)
\end{array}
\right).
\end{equation}

\

\begin{proposition}\label{multilinear}
Let $\mathbf{S}, \mathbf{T}$ be multilinear tableaux of the same shape $\lambda$,
then
\begin{equation}
(\mathbf{S}|\fbox{$\mathbf{T}$}) =
\left(
\begin{array}{c}
\mathbf{e}^{\lambda}_{\mathbf{S}\mathbf{T}}(1) \\   \vdots \\ \mathbf{e}^{\lambda}_{\mathbf{S}\mathbf{T}}(h)
\end{array}
\right| \left.
\begin{array}{c}
1 \\  \vdots \\ h
\end{array}
\right).
\end{equation}
\end{proposition}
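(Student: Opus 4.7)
The plan is to expand both sides into signed sums of column bitableaux whose right column is the identity $(1,\ldots,h)$ and match them term-by-term. Unpacking convention~(\ref{sum convention}) together with the definition of the Young symmetrizer in Eq.~(\ref{Young symmetrizers}), the right-hand side of the proposition is the double sum, indexed by $(\sigma,\tau)\in R(\mathbf{S})\times C(\mathbf{T})$ and signed by $(-1)^{|\sigma|}$, of column bitableaux whose $k$-th left entry is $(\sigma\,\theta_{\mathbf{S}\mathbf{T}}\,\tau)(k)$.

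I would derive the same expansion from the left-hand side in three steps. First, by the defining formula $(\mathbf{S}|\fbox{$\mathbf{T}$})=\sum_{\overline{\mathbf{T}}}(\mathbf{S}|\overline{\mathbf{T}})$ and the multilinearity of $\mathbf{T}$, the column-permuted tableaux $\overline{\mathbf{T}}$ are in bijection with the elements $\tau\in C(\mathbf{T})$ via $\overline{\mathbf{T}}=\tau\circ\mathbf{T}$ ($\tau$ acting on entries). Second, each $(\mathbf{S}|\tau\circ\mathbf{T})$ is expanded into column bitableaux via the formula of Section~\ref{Bit Exp}; after translating the position-acting row subgroup into the entry-acting $R(\mathbf{S})$ by conjugation with $\mathbf{S}$ (which preserves signs), this yields a signed sum over $\sigma\in R(\mathbf{S})$ of column bitableaux with left column $\sigma\circ\mathbf{S}$ and right column $\tau\circ\mathbf{T}$. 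Third, commutativity in ${\mathbb C}[M_{h,h}]$ lets one rearrange factors in any column bitableau so that its right column becomes the identity: taking $\rho=B^{-1}$ as the rearranging permutation of positions gives $(A|B)=(A\circ B^{-1}|\mathrm{id})$ for any permutations $A,B$ of $\underline{h}$. Substituting here, the left column becomes $\sigma\circ\mathbf{S}\circ\mathbf{T}^{-1}\circ\tau^{-1}=\sigma\circ\theta_{\mathbf{S}\mathbf{T}}\circ\tau^{-1}$.

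Finally, re-indexing $\tau\mapsto\tau^{-1}$, which is a bijection of the subgroup $C(\mathbf{T})$, converts the accumulated double sum into precisely the expansion described in the first paragraph, completing the proof. The main care needed is the bookkeeping between position-acting and entry-acting realizations of $R(\mathbf{S})$ and $C(\mathbf{T})$, along with the $\tau\mapsto\tau^{-1}$ reversal: this arises because $\tau\circ\mathbf{T}$ sits on the \emph{right} of the bitableau, whereas $\tau$ sits on the \emph{right} of $\theta_{\mathbf{S}\mathbf{T}}$ in the Young symmetrizer. The sign-on-rows-only convention of Remark~\ref{conj remark} is consistent with the antisymmetry already built into the bitableau definition through row-biproduct determinants, so no further sign corrections arise.
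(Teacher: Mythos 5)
Your argument is correct and is essentially the paper's own proof: both expand $(\mathbf{S}|\fbox{$\mathbf{T}$})$ into a double sum of column bitableaux over $R(\mathbf{S})\times C(\mathbf{T})$, use the row-permutation invariance of column bitableaux (commutativity) to normalize the right column to the identity, obtaining left entries $\sigma\,\theta_{\mathbf{S}\mathbf{T}}\,\tau^{-1}(k)$, and finish by the re-indexing $\tau\mapsto\tau^{-1}$. The only differences are cosmetic (the paper first rewrites $\mathbf{S}=\theta_{\mathbf{S}\mathbf{T}}\circ\mathbf{T}$ and normalizes in two stages), and your explicit bookkeeping of position-acting versus entry-acting subgroups is exactly the point the paper leaves implicit.
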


\begin{proof}

Since
$$
(\mathbf{S}|\mathbf{T}) = \sum_{\sigma \in R(\mathbf{S})} \
(-1)^{|\sigma|}
\left(
\begin{array}{c}
\sigma \ \mathbf{S}(1)\\   \vdots \\ \sigma \ \mathbf{S}(h)
\end{array}
\right| \left.
\begin{array}{c}
\mathbf{T}(1)\\   \vdots \\ \mathbf{T}(h)
\end{array}
\right)
$$
(see subsection \ref{Bit Exp}),
then
\begin{align*}
(\mathbf{S}|\fbox{$\mathbf{T}$}) &= \sum_{\sigma \in R(\mathbf{S}), \tau \in C(\mathbf{T})} \
(-1)^{|\sigma|}
\left(
\begin{array}{c}
\sigma \ \mathbf{S}(1)\\   \vdots \\ \sigma \ \mathbf{S}(h)
\end{array}
\right| \left.
\begin{array}{c}
\tau \ \mathbf{T}(1)\\   \vdots \\ \tau \ \mathbf{T}(h)
\end{array}
\right)
\\
&= \sum_{\sigma \in R(\mathbf{S}), \tau \in C(\mathbf{T})} \
(-1)^{|\sigma|}
\left(
\begin{array}{c}
 \sigma \ \theta_{\mathbf{S}\mathbf{T}} \ \mathbf{T}(1)\\
\vdots \\   \sigma \ \theta_{\mathbf{S}\mathbf{T}} \ \mathbf{T}(h)
\end{array}
\right| \left.
\begin{array}{c}
\tau \ \mathbf{T}(1)\\   \vdots \\ \tau \ \mathbf{T}(h)
\end{array}
\right)
\\
&=
\sum_{\sigma \in R(\mathbf{S}), \tau \in C(\mathbf{T})} \
(-1)^{|\sigma|}
\left(
\begin{array}{c}
\sigma \  \theta_{\mathbf{S}\mathbf{T}} \ \tau^{-1} \ \mathbf{T}(1)\\   \vdots \\ \sigma \
\theta_{\mathbf{S}\mathbf{T}} \ \tau^{-1} \ \mathbf{T}(h)
\end{array}
\right| \left.
\begin{array}{c}
 \mathbf{T}(1)\\   \vdots \\  \mathbf{T}(h)
\end{array}
\right)
\\
&=
\sum_{\sigma \in R(\mathbf{S}), \tau \in C(\mathbf{T})} \
(-1)^{|\sigma|}
\left(
\begin{array}{c}
\sigma \  \theta_{\mathbf{S}\mathbf{T}} \ \tau (1) \\   \vdots \\ \sigma \  \theta_{\mathbf{S}\mathbf{T}} \ \tau  (h)
\end{array}
\right| \left.
\begin{array}{c}
 1\\   \vdots \\  h
\end{array}
\right).
\end{align*}
Since
$$
\mathbf{e}^{\lambda}_{\mathbf{S}\mathbf{T}} = \sum_{\sigma \in R(\mathbf{S}), \tau \in C(\mathbf{T})} \
(-1)^{|\sigma|} \ \sigma \ \theta_{\mathbf{S}\mathbf{T}} \ \tau,
$$
then
$$
(\mathbf{S}|\fbox{$\mathbf{T}$})
=
\left(
\begin{array}{c}
\mathbf{e}^{\lambda}_{\mathbf{S}\mathbf{T}}(1) \\   \vdots \\ \mathbf{e}^{\lambda}_{\mathbf{S}\mathbf{T}}(h)
\end{array}
\right| \left.
\begin{array}{c}
1 \\  \vdots \\ h
\end{array}
\right).
$$
\end{proof}

\subsubsection{Right symmetrized bitableaux and Young symmetrizers: the general case}

Let $U, V$ be Young tableaux on the alphabets $\underline{n}, \underline{d}$, and let
$\mathbf{S}, \mathbf{T}$ be multilinear tableaux of the same shape $\lambda \vdash h$.
There exists a unique  pair of maps $I :  \underline{h}   \rightarrow \underline{n}$, $J :  \underline{h}   \rightarrow \underline{d}$,
such that
$$
U = I_{\mathbf{S}} \quad V =J_{\mathbf{T}}
$$
(see Eq. (\ref{specialization})).

\begin{proposition}
\begin{equation}\label{gen case}
(U|\fbox{$V$}) =
(I_{\mathbf{S}}|\fbox{$ J_{\mathbf{T}}$}) =
\left(
\begin{array}{c}
I \big( \mathbf{e}^{\lambda}_{\mathbf{S}\mathbf{T}} (1) \big) \\   \vdots \\ I \big( \mathbf{e}^{\lambda}_{\mathbf{S}\mathbf{T}} (h) \big)
\end{array}
\right| \left.
\begin{array}{c}
 J(1)\\   \vdots \\  J(h)
\end{array}
\right).
\end{equation}
\end{proposition}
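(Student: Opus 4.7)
The plan is to deduce Equation (\ref{gen case}) from the already-established multilinear case (Proposition \ref{multilinear}) by applying a suitable specialization homomorphism. Specifically, I define $\phi_{I,J} : {\mathbb C}[M_{h,h}] \to {\mathbb C}[M_{n,d}]$ to be the unique algebra homomorphism sending each generator $(i|j) \in {\mathbb C}[M_{h,h}]$, for $i, j \in \underline{h}$, to $(I(i)|J(j)) \in {\mathbb C}[M_{n,d}]$. The strategy is then to compare the images under $\phi_{I,J}$ of the two sides of the multilinear identity.

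The first step is to verify that $\phi_{I,J}$ transports the multilinear right symmetrized bitableau to the general one: $\phi_{I,J}\bigl((\mathbf{S}|\fbox{$\mathbf{T}$})\bigr) = (I_{\mathbf{S}}|\fbox{$J_{\mathbf{T}}$}) = (U|\fbox{$V$})$. Since $\mathbf{T}$ is multilinear, every column permutation of $\mathbf{T}$ is uniquely of the form $\tau\mathbf{T}$ for some $\tau \in C(\mathbf{T})$, so $(\mathbf{S}|\fbox{$\mathbf{T}$}) = \sum_{\tau \in C(\mathbf{T})} (\mathbf{S}|\tau\mathbf{T})$ and its $\phi_{I,J}$-image is $\sum_{\tau \in C(\mathbf{T})} (U|J_{\tau\mathbf{T}})$. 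I would then identify this sum with $(U|\fbox{$V$})$ by a column-by-column combinatorial argument: if a column of $V$ has entries with multiplicities $m_1, \ldots, m_r$, the number of $\tau \in C(\mathbf{T})$ producing a fixed column permutation of $V$ at that column is exactly $m_1!\, m_2! \cdots m_r!$, matching the multiplicity assigned by the definition of $(U|\fbox{$V$})$ (as illustrated by Example \ref{ex symm}).

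The second step is to compute $\phi_{I,J}$ on the right-hand side of the multilinear identity. A column bitableau in ${\mathbb C}[M_{h,h}]$ is, by Equation (\ref{column sign}), $(-1)^{h \choose 2}$ times a monomial in the generators $(k|\ell)$, so $\phi_{I,J}$ sends it to the column bitableau with entries $I(k)$ in the left column and $J(\ell)$ in the right column. Combined with the linearity of the sum convention (\ref{sum convention}), applying $\phi_{I,J}$ to the identity of Proposition \ref{multilinear} produces precisely Equation (\ref{gen case}).

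The main obstacle is the combinatorial verification in the first step: one must confirm that the multiplicities arising from summing over $C(\mathbf{T})$ coincide with those arising from repeated entries in the columns of $V$ when $J$ is not injective. This reduces to the elementary fact that the $\ell!$ permutations of a column of $\mathbf{T}$ of length $\ell$ distribute themselves so that each distinct $J$-image arrangement is realized $m_1! \cdots m_r!$ times. Once this bookkeeping is in place, the homomorphism argument yields the claim without further effort.
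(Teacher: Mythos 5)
Your proposal is correct and follows essentially the same route as the paper: the paper's proof simply specializes the computation of Proposition \ref{multilinear} by applying the maps $I$ and $J$ to the entries, which is exactly what your homomorphism $\phi_{I,J}$ formalizes. Your explicit check that $\sum_{\tau \in C(\mathbf{T})} (U|J_{\tau\mathbf{T}}) = (U|\fbox{$V$})$ --- i.e.\ that the multiplicities from the column group match the multiplicity convention in the definition of the right symmetrized bitableau --- is a detail the paper leaves implicit, and it is handled correctly.
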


\begin{proof}

From  Proposition \ref{multilinear}, we get:

\begin{align*}
(I_{\mathbf{S}}|\fbox{$ J_{\mathbf{T}}$}) &= \sum_{\eta \in R(\mathbf{T}), \tau \in C(\mathbf{T})} \
(-1)^{|\eta|}
\left(
\begin{array}{c}
I \big(   \eta \ \theta_{\mathbf{S}\mathbf{T}} \ \tau^{-1} \ (1) \big)\\   \vdots
\\ I \big( \eta \ \theta_{\mathbf{S}\mathbf{T}} \ \tau^{-1} (h) \big)
\end{array}
\right| \left.
\begin{array}{c}
 J(1)\\   \vdots \\  J(h)
\end{array}
\right).
\end{align*}
\end{proof}

\subsection{Immanants in ${\mathbb C}_h[M_{n,d}]$}

The immanant of a matrix was defined by D. E. Littlewood and A. R. Richardson as a
generalization of the concepts of determinant and permanent  \cite{Little1-BR} (see also
\cite{Little2-BR}, \cite{goulden-BR}).

Let $\lambda \vdash h$ be a partition and denote by $\chi^{\lambda}$ the {\textit{irreducible character}} associated
to the irreducible representation of shape $\lambda$ of the symmetric group $\mathbf{S}_h$,
and let
$$
\overline{\chi}_{\lambda} = \sum_{\sigma \in \mathbf{S}_h} \ \chi^{\lambda}(\sigma) \sigma \in {\mathbb C}[\mathbf{S}_h].
$$

\begin{example}
Let $n = 3$, $\lambda = (2, 1) \vdash 3$.

The irreducible character element of the group algebra ${\mathbb C}[\mathbf{S}_3]$ associated to the partition $\lambda = (2, 1)$ is
$$
\overline{\chi}_{\lambda} = \sum_{\sigma \in \mathbf{S}_3} \ \chi^{\lambda}(\sigma) \sigma = 2 I - (123) - (132) \in \mathbb{C}[\mathbf{S}_3].
$$
\end{example}\qed

The  {\textit{(generalized) immanant}}
$$
imm_{\lambda}(i_1 i_2 \cdots i_h;j_1 j_2 \cdots j_h), \quad (i_1 i_2 \cdots i_h) \in \underline{n}^h, \ (j_1 j_2 \cdots j_h) \in \underline{d}^h
$$
in  the polynomial algebra in ${\mathbb C}_h[M_{n,d}]$ is the element:

\begin{align*}
imm_{\lambda}(i_1 i_2 \cdots i_h;j_1 j_2 \cdots j_h) & =
\sum_{\sigma \in \mathbf{S}_h} \ \chi^{\lambda}(\sigma) \left(
\begin{array}{c}
i_{\sigma(1)}\\  i_{\sigma(2)} \\ \vdots \\ i_{\sigma(h)}
\end{array}
\right| \left.
\begin{array}{c}
j_1\\  j_2 \\ \vdots \\ j_h
\end{array}
\right)
\\
& =\sum_{\sigma \in \mathbf{S}_h} \ \chi^{\lambda}(\sigma)  \left(
\begin{array}{c}
i_1\\  i_2 \\ \vdots \\ i_h
\end{array}
\right| \left.
\begin{array}{c}
j_{\sigma(1)}\\  j_{\sigma(2)} \\ \vdots \\ j_{\sigma(h)}
\end{array}
\right).
\end{align*}

Since the characters are {\textit{invariant on the  conjugacy classes }} of $\mathbf{S}_h$, it follows that
$$
imm_{\lambda}(i_{\tau(1)}  i_{\tau(2)}  \cdots  i_{\tau(h)};j_{\tau(1)}  j_{\tau(2)}  \cdots  j_{\tau(h)}) =
 imm_{\lambda}(i_1 i_2 \cdots i_h;j_1 j_2 \cdots j_h).
$$

Hence,

\begin{proposition}\label{IMM}
The map
$$
IMM_{\lambda} :\ \left(
\begin{array}{c}
i_1\\ i_2 \\ \vdots \\ i_h
\end{array}
\right| \left.
\begin{array}{c}
j_1\\  j_2 \\ \vdots \\ j_h
\end{array}
\right)
\mapsto
imm_{\lambda}(i_1 i_2 \cdots i_h;j_1 j_2 \cdots j_h)
$$
defines a linear map
$$
IMM_{\lambda} : {\mathbb C}_h[M_{n,d}] \rightarrow   {\mathbb C}_h[M_{n,d}].
$$
\end{proposition}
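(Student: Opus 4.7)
The content of the proposition is well-definedness: linearity of the extension is forced once a prescription respecting all linear relations is given on a spanning set. Column bitableaux of depth $h$ do span ${\mathbb C}_h[M_{n,d}]$, since by Eq.~(\ref{column sign}) each one equals $(-1)^{h \choose 2}$ times a monomial of degree $h$, and these monomials span the $h$-th homogeneous component.

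The next step is to identify the linear relations. Two column bitableaux
$$
(i_1 \cdots i_h\,|\,j_1 \cdots j_h) \quad \text{and} \quad (i'_1 \cdots i'_h\,|\,j'_1 \cdots j'_h)
$$
coincide as elements of ${\mathbb C}_h[M_{n,d}]$ if and only if the multisets of ordered pairs $\{(i_k,j_k)\}$ and $\{(i'_k,j'_k)\}$ are equal; equivalently, iff there exists $\tau \in \mathbf{S}_h$ with $i'_k = i_{\tau(k)}$ and $j'_k = j_{\tau(k)}$ for every $k$. Hence $IMM_\lambda$ descends to a well-defined linear endomorphism of ${\mathbb C}_h[M_{n,d}]$ precisely when
$$
imm_{\lambda}(i_{\tau(1)}\cdots i_{\tau(h)};\, j_{\tau(1)}\cdots j_{\tau(h)}) \;=\; imm_{\lambda}(i_1\cdots i_h;\, j_1\cdots j_h)
$$
for every $\tau \in \mathbf{S}_h$.

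This is precisely the class-function identity recorded immediately before the proposition. I would prove it by the reindexing $\sigma \mapsto \tau\sigma\tau^{-1}$ in the defining sum of the immanant: the monomial $\prod_k (i_{\tau(\sigma(k))}\,|\,j_{\tau(k)})$ equals $\prod_l (i_{\tau\sigma\tau^{-1}(l)}\,|\,j_l)$ by the substitution $l = \tau(k)$, and the coefficient transforms by $\chi^{\lambda}(\tau^{-1}\sigma'\tau) = \chi^{\lambda}(\sigma')$, which is constant on conjugacy classes. Once this identity is verified, $IMM_\lambda$ is linear by construction and its image lies in ${\mathbb C}_h[M_{n,d}]$ because each immanant is, by definition, a linear combination of column bitableaux of depth $h$. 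There is no genuine obstacle here; the only substantive step is the short conjugation reindexing, and recognizing that the well-definedness problem reduces to exactly that identity.
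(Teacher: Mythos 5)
Your proof is correct and follows the same route the paper intends: the paper records the conjugacy-class invariance $imm_{\lambda}(i_{\tau(1)}\cdots i_{\tau(h)};j_{\tau(1)}\cdots j_{\tau(h)})=imm_{\lambda}(i_1\cdots i_h;j_1\cdots j_h)$ immediately before the proposition and deduces it with a bare ``Hence,'' and you have simply made explicit the two facts that deduction relies on, namely that the only linear relations among column bitableaux are the simultaneous row permutations and that the conjugation reindexing $\sigma\mapsto\tau\sigma\tau^{-1}$ together with the class-function property of $\chi^{\lambda}$ verifies the required invariance.
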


Clearly, the immanant $imm_{\lambda}(i_1 i_2 \cdots i_h;j_1 j_2 \cdots j_h) \in {\mathbb C}_h[M_{n,d}]$ is the
{\textit{natural}} generalization of the  biproducts (\emph{signed minors}) $(i_1 i_2 \cdots i_h|j_1 j_2 \cdots j_h)$
in ${\mathbb C}[M_{n,d}].$

It is obvious that the immanants $imm_{\lambda}(i_1 i_2 \cdots i_h;j_1 j_2 \cdots j_h)$ are
homogeneous elements of degree $h \in \mathbb{N}$ of the polynomial algebra ${\mathbb C}[M_{n,d}]$;
therefore, by Theorem \ref{GC basis}, the immanants $imm_{\lambda}(i_1 i_2 \cdots i_h;j_1 j_2 \cdots j_h)$
expand into linear combination of standard right symmetrized bitableaux of
shapes that are partitions of $h$.

Furthermore, the following  stronger result holds.

\begin{proposition}\label{imm vs bit}
Let $\lambda \vdash h$. Any immanant $imm_{\lambda}(i_1 i_2 \cdots i_h;j_1 j_2 \cdots j_h)$
can be written as  a linear combination of standard right symmetrized bitableaux of the \textbf{same} shape $\lambda$:
\begin{multline*}
imm_{\lambda}(i_1 i_2 \cdots i_h;j_1 j_2 \cdots j_h) =
 \sum_{U, V} \ \varrho_{U,V}  \ (U|\fbox{$V$}), \\ \varrho_{U,V} \in {\mathbb C}, \quad sh(U) = sh(V) = \lambda.
\end{multline*}
\end{proposition}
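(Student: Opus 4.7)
The plan is to recognize the immanant as an ``application'' of the central group-algebra element $\overline{\chi}_{\lambda} \in {\mathbb C}[\mathbf{S}_h]$ to a column bitableau, then to re-expand $\overline{\chi}_{\lambda}$ in the basis of Young symmetrizers of shape $\lambda$, and finally to invoke formula (\ref{gen case}) together with the straightening theorem for right symmetrized bitableaux to finish.

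Using the sum convention of Eq.~(\ref{sum convention}), the definition of the immanant may be rewritten in compact form as
$$
imm_\lambda(i_1 i_2 \cdots i_h;\, j_1 j_2 \cdots j_h) \;=\;
\left(
\begin{array}{c} i_{\overline{\chi}_\lambda(1)} \\ \vdots \\ i_{\overline{\chi}_\lambda(h)} \end{array}
\right| \left.
\begin{array}{c} j_1 \\ \vdots \\ j_h \end{array}
\right),
$$
so the construction depends linearly on $\overline{\chi}_\lambda$. By the projector identity $\frac{\chi^\lambda(I)}{h!}\,\overline{\chi}_\lambda = \sum_{\mathbf{T}} \gamma^{\lambda}_{\mathbf{T}\mathbf{T}}$ stated just after (\ref{character}), the element $\overline{\chi}_\lambda$ lies in the isotypic component ${\mathbb C}^\lambda[\mathbf{S}_h]$.

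By Proposition~\ref{symm repr}(1), the Young symmetrizers $\mathbf{e}^{\lambda}_{\mathbf{S}\mathbf{T}}$ with $\mathbf{S}, \mathbf{T}$ multilinear standard tableaux of shape $\lambda$ form a basis of ${\mathbb C}^\lambda[\mathbf{S}_h]$. Hence there exist scalars $a_{\mathbf{S}\mathbf{T}}$ such that $\overline{\chi}_\lambda = \sum_{\mathbf{S}, \mathbf{T}} a_{\mathbf{S}\mathbf{T}}\, \mathbf{e}^{\lambda}_{\mathbf{S}\mathbf{T}}$. Substituting into the display above and applying Eq.~(\ref{gen case}) with the specializations $I(k) = i_k$, $J(k) = j_k$ yields
$$
imm_\lambda(i_1 i_2 \cdots i_h;\, j_1 j_2 \cdots j_h) \;=\; \sum_{\mathbf{S}, \mathbf{T}} a_{\mathbf{S}\mathbf{T}} \, (I_{\mathbf{S}} \,|\, \fbox{$J_{\mathbf{T}}$}),
$$
a linear combination of right symmetrized bitableaux all of shape \emph{exactly} $\lambda$.

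The Young tableaux $I_{\mathbf{S}}$ and $J_{\mathbf{T}}$ will not in general be standard, but the second bullet of Theorem~\ref{GC basis} guarantees that each right symmetrized bitableau of shape $\lambda$ expands uniquely into standard right symmetrized bitableaux of the \emph{same} shape $\lambda$; gathering these expansions produces the claimed formula. The one delicate point is precisely this shape-preservation under straightening: it is not automatic from the basis statement of the first bullet alone and is, in essence, the content of Corollary~\ref{GC decomposition} that the summands ${\mathbb C}_h^{\mu}[M_{n,d}]$ are separated by straightening. The rest of the argument is essentially a transfer, via the sum convention, of the isotypic decomposition of ${\mathbb C}[\mathbf{S}_h]$.
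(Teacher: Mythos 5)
Your proof is correct and follows essentially the same route as the paper's: both rewrite the immanant as the application of $\overline{\chi}_{\lambda}$ via the sum convention, expand $\overline{\chi}_{\lambda}$ inside the isotypic component $\mathbb{C}^{\lambda}[\mathbf{S}_h]$ into Young symmetrizers $\mathbf{e}^{\lambda}_{\mathbf{S}\mathbf{T}}$ of shape $\lambda$, and conclude with Eq.~(\ref{gen case}). The only (harmless) difference is that you pass directly to the Young-symmetrizer basis of $\mathbb{C}^{\lambda}[\mathbf{S}_h]$, whereas the paper first writes $\overline{\chi}_{\lambda}$ as $H(\lambda)\sum_{\mathbf{T}}\gamma^{\lambda}_{\mathbf{T}\mathbf{T}}$ and then expands the natural units; your explicit appeal to the shape-preserving straightening of Theorem~\ref{GC basis} for the final standardization is a point the paper leaves implicit.
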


\begin{proof}

Let $I: \underline{h} \rightarrow \underline{n}, \   J:  \underline{h}  \rightarrow \underline{n}$,
$I(s) = i_s, \ J(s) = j_s, \quad s = 1, 2, \ldots, h$.

Item $4)$ of Proposition \ref{symm repr} implies:
\begin{align*}
imm_{\lambda}(i_1 i_2 \cdots i_h;j_1 j_2 \cdots j_h)
&=&
\sum_{\sigma \in \mathbf{S}_h} \ \chi^{\lambda}(\sigma) \left(
\begin{array}{c}
I(\sigma(1))\\ I(\sigma(2)) \\ \vdots \\ I(\sigma(h))
\end{array}
\right| \left.
\begin{array}{c}
J(1)\\  J(2) \\ \vdots \\ J(h)
\end{array}
\right)
\\
&=&
\left(
\begin{array}{c}
I(\chi_{\lambda}(1))\\  I(\chi_{\lambda}(2)) \\ \vdots \\ I(\chi_{\lambda}(h))
\end{array}
\right| \left.
\begin{array}{c}
J(1)\\  J(2) \\ \vdots \\ J(h)
\end{array}
\right)
\\
&=&
H(\lambda) \
\sum_{\mathbf{T}} \  \left(
\begin{array}{c}
I({\mathbf{\gamma}}^{\lambda}_{\mathbf{T}\mathbf{T}}(1))\\ I({\mathbf{\gamma}}^{\lambda}_{\mathbf{T}\mathbf{T}}(2))
\\ \vdots \\ I({\mathbf{\gamma}}^{\lambda}_{\mathbf{T}\mathbf{T}}(h))
\end{array}
\right| \left.
\begin{array}{c}
J(1)\\  J(2) \\ \vdots \\ J(h)
\end{array}
\right)
\end{align*}
Since the natural units ${\mathbf{\gamma}}^{\lambda}_{\mathbf{T} \mathbf{T}}$ expand into Young symmetrizers
of the same shape:
$$
{\mathbf{\gamma}}^{\lambda}_{\mathbf{T} \mathbf{T}} = \sum_{\mathbf{S_1}, \mathbf{S_2}} \
C_{\mathbf{T} \mathbf{T},\mathbf{S_1} \mathbf{S_2}} \cdot
\mathbf{e}^{\lambda}_{\mathbf{S_1}\mathbf{S_2}}, \quad C_{\mathbf{T} \mathbf{T},\mathbf{S_1} \mathbf{S_2}} \in \mathbb{C},
$$
then
\begin{align*}
imm_{\lambda}(i_1 i_2 \cdots i_h;j_1 j_2 \cdots j_h)
&=
H(\lambda) \
\sum_{\mathbf{T}} \
\sum_{\mathbf{S_1}, \mathbf{S_2}} \
C_{\mathbf{T} \mathbf{T},\mathbf{S_1} \mathbf{S_2}}
\left(
\begin{array}{c}
I(\mathbf{e}^{\lambda}_{\mathbf{S_1}\mathbf{S_2}}(1))\\  I(\mathbf{e}^{\lambda}_{\mathbf{S_1}\mathbf{S_2}}(2))
\\ \vdots \\ I(\mathbf{e}^{\lambda}_{\mathbf{S_1}\mathbf{S_2}}(h))
\end{array}
\right| \left.
\begin{array}{c}
J(1)\\  J(2) \\ \vdots \\ J(h)
\end{array}
\right)
\\
&=
H(\lambda) \
\sum_{\mathbf{T}} \
\sum_{\mathbf{S_1}, \mathbf{S_2}} \
C_{\mathbf{T} \mathbf{T},\mathbf{S_1} \mathbf{S_2}}
(I_{S_1}|\fbox{$J_{S_2}$}).
\end{align*}
\end{proof}

From Proposition \ref{imm vs bit} and Theorem \ref{GC basis}, it follows:

\begin{corollary} \label{zero imm}
Let $\lambda \vdash h$. If $\lambda_1  \nleq min\{n, d \}$, then
$$
imm_{\lambda}(i_1 i_2 \cdots i_h;j_1 j_2 \cdots j_h) = 0.
$$
\end{corollary}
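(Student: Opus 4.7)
The plan is to chain together the two results immediately preceding the corollary: Proposition \ref{imm vs bit}, which rewrites any immanant as a linear combination of right symmetrized bitableaux of the same shape $\lambda$, and the third bullet of Theorem \ref{GC basis}, which asserts that every right symmetrized bitableau $(U|\fbox{$V$})$ with $sh(U)=sh(V)=\lambda$ and $\lambda_1 \nleq n,d$ vanishes in ${\mathbb C}[M_{n,d}]$.

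First, I would apply Proposition \ref{imm vs bit} to write
$$
imm_{\lambda}(i_1 i_2 \cdots i_h;j_1 j_2 \cdots j_h) = \sum_{U,V} \varrho_{U,V}\, (U|\fbox{$V$}),
\quad sh(U)=sh(V)=\lambda.
$$
The key property being exploited here is the shape-preservation of that expansion: every term on the right-hand side has shape exactly $\lambda$, not a smaller shape that could escape the hypothesis.

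Next, I would invoke the hypothesis $\lambda_1 \nleq \min\{n,d\}$, which translates to $\lambda_1 > n$ or $\lambda_1 > d$, hence $\lambda_1 \nleq n,d$ in the sense of Theorem \ref{GC basis} (third bullet). Applying that bullet termwise, each summand $(U|\fbox{$V$})$ equals zero, and therefore so does the whole sum. This yields $imm_{\lambda}(i_1 i_2 \cdots i_h;j_1 j_2 \cdots j_h) = 0$, as claimed.

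There is essentially no obstacle: the work has already been done in Proposition \ref{imm vs bit} (which is what makes the shape $\lambda$ persist through the expansion, rather than potentially degrading) and in Theorem \ref{GC basis}. The only thing to be careful about is making sure the conventions align, namely that $\lambda_1 > \min\{n,d\}$ is precisely the condition that triggers the vanishing clause of the Gordan-Capelli basis theorem; once this is checked, the corollary is a one-line deduction.
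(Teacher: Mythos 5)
Your argument is exactly the paper's: the corollary is stated there as an immediate consequence of Proposition \ref{imm vs bit} (shape-preserving expansion into right symmetrized bitableaux) combined with the vanishing clause of Theorem \ref{GC basis}, which is precisely the two-step chain you give. The proposal is correct and matches the intended proof.
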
 \qed

The scalar multiple  $\frac {\chi^{\lambda}(I)} {n!} \ IMM_{\lambda}$ of the
linear operator $IMM_{\lambda}$ of Proposition \ref{IMM} acts on ${\mathbb C}_h[M_{n,d}]$ as
the {\textit{projector}} on the direct summand ${\mathbb C}_h^{\lambda}[M_{n,d}]$ in the Gordan-Capelli
direct sum decomposition (\ref{eq decomposition})
of Corollary \ref{GC decomposition}.

\begin{proposition}
Let $U, V $ be  Young tableaux of the same shape $sh(U) =sh(V) = \mu \vdash h.$ We have:
\begin{enumerate}
\item
if $\mu = \lambda$, then
\begin{equation}
\frac {\chi^{\lambda}(I)} {n!} \ IMM_{\lambda} \Big( (U| \fbox{$V$}) \Big) = (U| \fbox{$V$});
\end{equation}
\item
if $\mu \neq \lambda$, then
\begin{equation}
\frac {\chi^{\lambda}(I)} {n!} \ IMM_{\lambda} \Big( (U| \fbox{$V$}) \Big) = 0.
\end{equation}
\end{enumerate}
\end{proposition}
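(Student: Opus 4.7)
The plan is to reduce this proposition to the fact, established in the preceding proposition, that $\frac{\chi^{\lambda}(I)}{n!}\,\overline{\chi}_{\lambda}$ is the primitive central idempotent of ${\mathbb C}[\mathbf{S}_h]$ associated to the isotypic component ${\mathbb C}^{\lambda}[\mathbf{S}_h]$.

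The key preliminary observation is that $IMM_{\lambda}$, read in the sum-convention language of Eq.~(\ref{sum convention}), corresponds to right-multiplication of the inner element by $\overline{\chi}_{\lambda}$. Indeed, if a column bitableau has inner element $\sigma \in \mathbf{S}_h$, then from the definition of $IMM_{\lambda}$ and the identity $(\sigma\tau)(i) = \sigma(\tau(i))$, its image is a sum over $\tau$ of column bitableaux with inner element $\sigma\tau$, weighted by $\chi^{\lambda}(\tau)$; by the convention Eq.~(\ref{sum convention}), this equals the column bitableau with inner element $\sigma \cdot \overline{\chi}_{\lambda}$. By linearity, the same holds when $\sigma$ is replaced by any $\mathbf{p} \in {\mathbb C}[\mathbf{S}_h]$.

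By Eq.~(\ref{gen case}), the right symmetrized bitableau $(U|\fbox{$V$}) = (I_{\mathbf{S}}|\fbox{$J_{\mathbf{T}}$})$ is the column bitableau with inner element $\mathbf{e}^{\mu}_{\mathbf{S}\mathbf{T}}$. Specializing the observation above to $\mathbf{p} = \mathbf{e}^{\mu}_{\mathbf{S}\mathbf{T}}$ yields that $\frac{\chi^{\lambda}(I)}{n!}\,IMM_{\lambda}\big((U|\fbox{$V$})\big)$ is the column bitableau with inner element $\mathbf{e}^{\mu}_{\mathbf{S}\mathbf{T}} \cdot \frac{\chi^{\lambda}(I)}{n!}\,\overline{\chi}_{\lambda}$. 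Since the Young symmetrizer $\mathbf{e}^{\mu}_{\mathbf{S}\mathbf{T}}$ lies in ${\mathbb C}^{\mu}[\mathbf{S}_h]$ (cf.\ Proposition \ref{symm repr}) and $\frac{\chi^{\lambda}(I)}{n!}\,\overline{\chi}_{\lambda}$ is the central primitive idempotent projecting onto ${\mathbb C}^{\lambda}[\mathbf{S}_h]$, this group-algebra product equals $\mathbf{e}^{\mu}_{\mathbf{S}\mathbf{T}}$ when $\mu = \lambda$ and $0$ when $\mu \neq \lambda$. Substituting back, and re-applying Eq.~(\ref{gen case}) in the case $\mu = \lambda$, gives $(U|\fbox{$V$})$ for part (1) and $0$ for part (2).

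The main difficulty is purely notational: one must verify the composition order in Eq.~(\ref{sum convention}) so that $\overline{\chi}_{\lambda}$ appears on the correct side of $\mathbf{e}^{\mu}_{\mathbf{S}\mathbf{T}}$. Centrality of $\overline{\chi}_{\lambda}$ (a class-function-weighted group sum) renders this side choice immaterial, so no genuine obstacle remains.
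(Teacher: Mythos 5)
Your proposal is correct and follows essentially the same route as the paper: both rewrite $(U|\fbox{$V$})$ via Eq.~(\ref{gen case}) as a column bitableau whose inner element is a Young symmetrizer, identify $\frac{\chi^{\lambda}(I)}{n!}\,IMM_{\lambda}$ with multiplication of that inner element by the primitive central idempotent $\frac{\chi^{\lambda}(I)}{n!}\,\overline{\chi}_{\lambda}$, and conclude by orthogonality of the isotypic components. The only cosmetic difference is that the paper expands the idempotent as $\sum_{\mathbf{T}}\gamma^{\lambda}_{\mathbf{T}\mathbf{T}}$ and invokes the multiplication rules for natural units (and multiplies on the left rather than the right, which, as you note, is immaterial by centrality), whereas you use the projector property directly.
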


\begin{proof}

Set
$$
U = I_{\mathbf{T_0}}, \quad V = J_{\mathbf{T_0}}, \quad sh(U) = sh(V) = sh(\mathbf{T_0}) = \mu.
$$
Equation (\ref{gen case}) implies
$$
(U| \fbox{$V$})
=
(I_{\mathbf{T_0}}|\fbox{$J_{\mathbf{T_0}}$})
=
\left(
\begin{array}{c}
I \big( \mathbf{e}^{\mu}_{\mathbf{T_0}\mathbf{T_0}} (1) \big) \\   \vdots \\ I \big( \mathbf{e}^{\mu}_{\mathbf{T_0}\mathbf{T_0}} (h) \big)
\end{array}
\right| \left.
\begin{array}{c}
 J(1)\\   \vdots \\  J(h)
\end{array}
\right).
$$

Item $4)$ of Proposition \ref{symm repr} implies

\begin{align*}
\frac {\chi^{\lambda}(I)} {n!} \ IMM_{\lambda} \Big( (U| \fbox{$V$}) \Big) &=
(\sum_{\mathbf{T}} \ \mathbf{\gamma}^{\lambda}_{\mathbf{T}\mathbf{T}})  \left(
\begin{array}{c}
I \big( \mathbf{e}^{\mu}_{\mathbf{T_0}\mathbf{T_0}} (1) \big) \\   \vdots \\ I \big( \mathbf{e}^{\mu}_{\mathbf{T_0}\mathbf{T_0}} (h) \big)
\end{array}
\right| \left.
\begin{array}{c}
 J(1)\\   \vdots \\  J(h)
\end{array}
\right)
\\
&=
\left(
\begin{array}{c}
I \big( \sum_{\mathbf{T}} \ \mathbf{\gamma}^{\lambda}_{\mathbf{T}\mathbf{T}} \ \mathbf{e}^{\mu}_{\mathbf{T_0}\mathbf{T_0}} (1) \big)
\\   \vdots \\ I \big( \sum_{\mathbf{T}} \ \mathbf{\gamma}^{\lambda}_{\mathbf{T}\mathbf{T}} \ \mathbf{e}^{\mu}_{\mathbf{T_0}\mathbf{T_0}} (h) \big)
\end{array}
\right| \left.
\begin{array}{c}
 J(1)\\   \vdots \\  J(h)
\end{array}
\right).
\end{align*}

If $\lambda \neq \mu$, the natural units $\mathbf{\gamma}^{\lambda}_{\mathbf{T}\mathbf{T}}$
and the Young symmetrizer $\mathbf{e}^{\mu}_{\mathbf{T_0}\mathbf{T_0}}$ belong
to different simple components of the semisimple algebra
$$
\mathbb{C}[\mathbf{S}_h] = \bigoplus_{\nu \vdash h} \ \mathbb{C}^{\nu}[\mathbf{S}_h],
$$
and  are therefore orthogonal. This proves the second assertion.

If $\lambda = \mu$, since (Proposition \ref{symm repr}, item $3)$ )
$$
\mathbf{\gamma}^{\lambda}_{\mathbf{T}\mathbf{T}} \ \mathbf{e}^{\lambda}_{\mathbf{T_0}\mathbf{T_0}} =
\delta_{\mathbf{T},\mathbf{T_0}} \ \mathbf{e}^{\lambda}_{\mathbf{T_0}\mathbf{T_0}},
$$
we get
$$
\frac {\chi^{\lambda}(I)} {n!} \ IMM_{\lambda} \Big( (U| \fbox{$V$}) \Big)
=
\left(
\begin{array}{c}
I \big( \mathbf{e}^{\lambda}_{\mathbf{T_0}\mathbf{T_0}} (1) \big) \\   \vdots \\ I \big( \mathbf{e}^{\lambda}_{\mathbf{T_0}\mathbf{T_0}} (h) \big)
\end{array}
\right| \left.
\begin{array}{c}
 J(1)\\   \vdots \\  J(h)
\end{array}
\right)
=
(U| \fbox{$V$}),
$$
and the first assertion is proved.
\end{proof}

\begin{proposition}\label{bit vs imm}
Let $\lambda \vdash h$. Any right symmetrized bitableau $(U| \fbox{$V$})$ of shape $sh(U) = sh(V) = \lambda$
can be written as  a linear combination of immanants $imm_{\lambda}(i_1 i_2 \cdots i_h;j_1 j_2 \cdots j_h)$
associated to the \textbf{same} shape $\lambda$.
\end{proposition}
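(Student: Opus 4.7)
The plan is to obtain this as an immediate dual of Proposition \ref{imm vs bit}, by reading off the projector identity just established in reverse: rather than expanding an immanant into right symmetrized bitableaux, I will expand the right symmetrized bitableau $(U|\fbox{$V$})$ into column bitableaux and then apply $IMM_{\lambda}$ term by term.

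First, I would invoke the bitableau-to-column-bitableau expansion of Subsection \ref{Bit Exp}. Since $(U|\fbox{$V$}) = \sum_{\overline{V}} (U|\overline{V})$ and each determinantal bitableau $(U|\overline{V})$ expands as a signed sum of column bitableaux, we have
\[
(U|\fbox{$V$}) \;=\; \sum_{c} \alpha_c\, b_c, \qquad \alpha_c \in {\mathbb C},
\]
with each $b_c$ a column bitableau of depth $h$. By the very definition of $IMM_{\lambda}$ (Proposition \ref{IMM}), each $IMM_{\lambda}(b_c)$ is an immanant of shape $\lambda$ whose letter and place arguments are read off the columns of $b_c$; hence by linearity $IMM_{\lambda}\bigl((U|\fbox{$V$})\bigr)$ is a ${\mathbb C}$-linear combination of immanants of shape $\lambda$.

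Next, since $sh(U) = sh(V) = \lambda$, the projector proposition immediately preceding this statement yields
\[
(U|\fbox{$V$}) \;=\; \tfrac{\chi^{\lambda}(I)}{n!}\; IMM_{\lambda}\bigl((U|\fbox{$V$})\bigr),
\]
and combining this identity with the previous paragraph exhibits $(U|\fbox{$V$})$ as a ${\mathbb C}$-linear combination of immanants of the same shape $\lambda$, which is exactly the claim.

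There is no substantial obstacle beyond the material already proved: the real work is carried out by the projector proposition, whose proof rests on the orthogonality $\mathbf{\gamma}^{\lambda}_{\mathbf{T}\mathbf{T}}\, \mathbf{e}^{\mu}_{\mathbf{T_0}\mathbf{T_0}} = \delta_{\lambda,\mu}\,\delta_{\mathbf{T},\mathbf{T_0}}\, \mathbf{e}^{\lambda}_{\mathbf{T_0}\mathbf{T_0}}$ between isotypic components of ${\mathbb C}[\mathbf{S}_h]$. Given that fact, the present proposition is formally dual to Proposition \ref{imm vs bit}: the latter uses that $\overline{\chi}_\lambda$ expands, up to scalar, as $\sum_{\mathbf{T}} \mathbf{\gamma}^{\lambda}_{\mathbf{T}\mathbf{T}}$ and each natural unit further into Young symmetrizers; here we factor $IMM_{\lambda}$ through the column bitableau expansion in the opposite direction. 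The only routine checks are that shape is preserved at every step and that the $\lambda_1 \leq \min\{n,d\}$ hypothesis need not be added explicitly, since Corollary \ref{zero imm} and Theorem \ref{GC basis} make both sides vanish otherwise.
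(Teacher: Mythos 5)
Your proof is correct and is essentially the paper's own argument: the paper's one-line proof (``expand $(U|\fbox{$V$})$ into monomials and apply $IMM_{\lambda}$ to each summand'') relies implicitly on exactly the projector identity $\tfrac{\chi^{\lambda}(I)}{n!}\,IMM_{\lambda}\bigl((U|\fbox{$V$})\bigr)=(U|\fbox{$V$})$ that you invoke explicitly. You have simply spelled out the details the paper leaves tacit.
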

\begin{proof}
Expand the right symmetrized bitableau $(U| \fbox{$V$})$ into monomials and
apply to each summand the linear operator $IMM_{\lambda}.$
\end{proof}

By combining Theorem \ref{GC basis} and  Proposition \ref{bit vs imm}, we get

\begin{proposition}\label{immanant span}
The set of immanants
$$
imm_{\lambda}(i_1 i_2 \cdots i_h;j_1 j_2 \cdots j_h),
$$
with
$$
\lambda \vdash h, \ \lambda_1 \nleq min\{n, d \}, \
(i_1 i_2 \cdots i_h) \in \underline{n}^h, \ (j_1 j_2 \cdots j_h) \in \underline{d}^h,
$$
is a spanning set of ${\mathbb C}_h[M_{n,d}].$
\end{proposition}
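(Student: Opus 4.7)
The plan is to deduce this spanning statement as an immediate combination of the Gordan--Capelli basis theorem (Theorem \ref{GC basis}) and the bitableau-to-immanant expansion (Proposition \ref{bit vs imm}). Since $\mathbb{C}_h[M_{n,d}]$ is already known to admit a basis of standard right symmetrized bitableaux of shapes $\lambda \vdash h$ with $\lambda_1 \leq \min\{n,d\}$, and each such bitableau has already been written as a linear combination of immanants of the \emph{same} shape $\lambda$, it suffices to stitch these two results together.

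Concretely, I would proceed as follows. First, invoke Theorem \ref{GC basis} to write any element $f \in \mathbb{C}_h[M_{n,d}]$ as a finite sum
\[
f = \sum_{\lambda,\, S,\, T} c_{S,T}^{\lambda}\, (S|\fbox{$T$}),
\]
where the triples $(\lambda, S, T)$ range over partitions $\lambda \vdash h$ with $\lambda_1 \leq \min\{n,d\}$ and over pairs of standard Young tableaux $S,T$ of shape $\lambda$. Next, apply Proposition \ref{bit vs imm} to each summand to replace $(S|\fbox{$T$})$ by a linear combination of immanants $imm_{\lambda}(i_1 \cdots i_h; j_1 \cdots j_h)$ of the \emph{same} shape $\lambda$ (with indices drawn from $\underline{n}^h$ and $\underline{d}^h$). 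Collecting terms then exhibits $f$ as a linear combination of immanants, which is precisely the spanning assertion.

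The only point worth flagging is that the statement of the proposition (with $\lambda_1 \nleq \min\{n,d\}$) must be read together with Corollary \ref{zero imm}, which guarantees that the immanants indexed by shapes with $\lambda_1 > \min\{n,d\}$ vanish identically; thus the genuinely spanning family is indexed by $\lambda_1 \leq \min\{n,d\}$, in perfect agreement with the index range in Theorem \ref{GC basis}. There is no substantive obstacle: the combinatorial work has already been absorbed into Proposition \ref{bit vs imm} (which rests on the natural-unit identities of Proposition \ref{symm repr}) and the straightening theorem. The whole argument is therefore a two-step chain of implications, and no estimate, induction, or case analysis is needed beyond what is already available.
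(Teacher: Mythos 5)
Your proof is correct and is exactly the argument the paper intends: the proposition is presented there as an immediate consequence of combining Theorem \ref{GC basis} with Proposition \ref{bit vs imm}, which is precisely your two-step chain. Your side remark that the condition $\lambda_1 \nleq \min\{n,d\}$ in the statement should be read as $\lambda_1 \leq \min\{n,d\}$ (since by Corollary \ref{zero imm} the other immanants vanish) correctly identifies what is evidently a typo in the paper.
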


\section{The superalgebraic approach to the enveloping algebra $\mathbf{U}(gl(n))$}\label{sec 3}

In this Section, we provide a synthetic presentation   of the {\it{superalgebraic method of virtual variables}} for $gl(n)$.

This method was developed  by the present authors
for the   general linear Lie superalgebras $gl(m|n)$, in the series of notes
\cite{Bri-BR}, \cite{BriUMI-BR}, \cite{Brini1-BR}, \cite{Brini2-BR}, \cite{Brini3-BR},
\cite{Brini4-BR},  \cite{BRT-BR}.

The technique of virtual variables is  an extension of Capelli's method of  {\textit{ variabili ausilarie}}
(Capelli \cite{Cap4-BR}, see also Weyl \cite{Weyl-BR}).

Capelli introduced the technique of {\textit{ variabili ausilarie}} in order to manage symmetrizer operators in
terms of polarization operators and to  simplify the study of some skew-symmetrizer operators (namely, the famous central Capelli operator).

Capelli's idea was well suited to
treat symmetrization, but it did not work in the same efficient way while dealing with skew-symmetrization.

One had to wait the introduction of the notion of {\textit{superalgebras}} (see,e.g. \cite{Scheu-BR},  \cite{KAC1-BR})
 to have the right conceptual framework to treat symmetry and skew-symmetry in one and the same way.
To the best of our knowledge, the first mathematician who intuited the connection between Capelli's idea and superalgebras was
Koszul in $1981$  \cite{Koszul-BR};
Koszul proved that the classical determinantal Capelli operator can be rewritten - in a much simpler way -
by adding to the symbols to be dealt with an extra auxiliary symbol that obeys to different commutation relations.

The superalgebraic method of virtual variables allows us to express remarkable classes of elements in $\mathbf{U}(gl(n))$
as images - with respect to the {\textit{Capelli devirtualization epimorphism}} (Subsection \ref{Virt} below) - of simple
{\it{monomials}} and to obtain transparent combinatorial descriptions of their actions on irreducible $gl(n)-$modules.

Among these classes, here we recall the classes of  Capelli bitableaux $[S|T]$ and
Young-Capelli bitableaux $[S|\fbox{$T$}]$
(see \cite{Brini2-BR}, \cite{Brini3-BR}, \cite{Bri-BR}, and subsection \ref{Capbit sub} below),
and introduce the \emph{new} class of
Capelli immanants
$$
Cimm_{\lambda}[i_1 i_2 \cdots i_h;j_1 j_2 \cdots j_h]
$$
(see Section \ref{Capimm sect} below).

Moreover, this method
throws a  bridge between the theory of  $\mathbf{U}(gl(n))$ and  the {\it{(super)straightening techniques}}
in (super)symmetric algebras
(see, e.g. \cite{rota-BR}, \cite{Brini3-BR}, \cite{Brini4-BR},
 \cite{Bri-BR}).

\subsection{The superalgebras ${\mathbb C}[M_{m_0|m_1+n,d}]$ and $gl(m_0|m_1+n)$ }

\subsubsection{The general linear Lie super algebra $gl(m_0|m_1+n)$}
 Given a vector space $V_n$ of dimension $n$, we will regard it as a subspace of a $\mathbb{Z}_2-$graded vector space
 $W = W_0 \oplus W_1$, where
$$
W_0 = V_{m_0}, \qquad W_1 = V_{m_1} \oplus V_n.
$$
The  vector spaces
$V_{m_0}$ and $V_{m_1}$ (informally, we assume that
$dim(V_{m_0})=m_0$ and $dim(V_{m_1})=m_1$ are ``sufficiently large'') are called
the  {\textit{positive virtual (auxiliary)
vector space}},  the {\textit{negative virtual (auxiliary) vector space}}, respectively, and $V_n$ is called the {\textit{(negative) proper vector space}}.

 The inclusion $V_n \subset W$ induces a natural embedding of the ordinary general linear Lie algebra $gl(n)$ of $V_n$ into the
 {\textit{auxiliary}}
general linear Lie {\it{superalgebra}} $gl(m_0|m_1+n)$ of $W = W_0 \oplus W_1$ (see, e.g. \cite{KAC1-BR}, \cite{Scheu-BR}).

Let
$
A_0 = \{ \alpha_1, \ldots, \alpha_{m_0} \},$  $A_1 = \{ \beta_1, \ldots, \beta_{m_1} \},$
$L = \{ x_1, \ldots, x_n \}$
denote \emph{fixed  bases} of $V_{m_0}$, $V_{m_1}$ and $V_n$, respectively; therefore $|\alpha_s| = 0 \in \mathbb{Z}_2,$
and $|\beta_t| = |x_i|   = 1 \in \mathbb{Z}_2.$

Let
$$
\{ e_{a, b}; a, b \in A_0 \cup A_1 \cup L \}, \qquad |e_{a, b}| =
|a|+|b| \in \mathbb{Z}_2
$$
be the standard $\mathbb{Z}_2-$homogeneous basis of the Lie superalgebra $gl(m_0|m_1+n)$ provided by the
elementary matrices. The elements $e_{a, b} \in gl(m_0|m_1+n)$ are $\mathbb{Z}_2-$homogeneous of
$\mathbb{Z}_2-$degree $|e_{a, b}| = |a| + |b|.$

The superbracket of the Lie superalgebra $gl(m_0|m_1+n)$ has the following explicit form:
$$
\left[ e_{a, b}, e_{c, d} \right] = \delta_{bc} \ e_{a, d} - (-1)^{(|a|+|b|)(|c|+|d|)} \delta_{ad}  \ e_{c, b},
$$
$a, b, c, d \in A_0 \cup A_1 \cup L.$

\begin{remark}\label{symbols}
In the following, the elements of the sets $A_0, A_1, L$ will be called
\emph{positive virtual symbols}, \emph{negative virtual symbols} and \emph{negative proper symbols},
respectively.
\end{remark}

\subsubsection{The  supersymmetric algebra ${\mathbb C}[M_{m_0|m_1+n,d}]$}

As already said, we will write $(i|j)$ in place of $x_{ij},$ and regard the (commutative) algebra ${\mathbb C}[M_{n,d}]$
as a subalgebra of the \textit{``auxiliary'' supersymmetric algebra}
$$
{\mathbb C}[M_{m_0|m_1+n,d}] = {\mathbb C}\big[ (\alpha_s|j), (\beta_t|j), (i|j) \big]
$$
 generated by the ($\mathbb{Z}_2$-graded) variables $(\alpha_s|j), (\beta_t|j), (i|j)$,
$j = 1, 2, \ldots, d$,
 where
 $$
 |(\alpha_s|j)| = 1 \in \mathbb{Z}_2 \ \ and \ \  |(\beta_t|j)| = |(i|j)| = 0 \in \mathbb{Z}_2,
 $$
subject to the commutation relations:

$$
(a|h)(b|k) = (-1)^{|(a|h)||(b|k)|} \ (b|k)(a|h),
$$
for $a, b \in  \{ \alpha_1, \ldots, \alpha_{m_0} \} \cup \{ \beta_1, \ldots, \beta_{m_1} \} \cup \{1, 2, \ldots , n\}.$

In plain words, all the variables commute each other, with the exception of
pairs of variables $(\alpha_s|j), (\alpha_t|j)$ that skew-commute:
$$
(\alpha_s|j) (\alpha_t|j) = - (\alpha_t|j) (\alpha_s|j).
$$

In the standard notation of multilinear algebra, we have:
\begin{align*}
{\mathbb C}[M_{m_0|m_1+n,d}]
& \cong \Lambda \left[ W_0 \otimes P_d \right]
\otimes      {\mathrm{Sym}} \left[ W_1  \otimes P_d \right] \\
 & =
 \Lambda \left[ V_{m_0} \otimes P_d \right]
\otimes      {\mathrm{Sym}} \left[ (V_{m_1} \oplus V_n)  \otimes P_d \right]
\end{align*}
where $P_d = (P_d)_1$ denotes the trivially (odd) $\mathbb{Z}_2-$graded vector space with distinguished basis $\{j; \ j = 1, 2, \ldots, d \}.$

The algebra  ${\mathbb C}[M_{m_0|m_1+n,d}]$ is a supersymmetric \textit{$\mathbb{Z}_2-$graded algebra} (superalgebra), whose
$\mathbb{Z}_2-$graduation is  inherited by the natural one in the exterior algebra.

\subsubsection{Left superderivations and left superpolarizations}

A {\it{left superderivation}} $D$ ($\mathbb{Z}_2-$homogeneous of degree $|D|$) (see, e.g. \cite{Scheu-BR}, \cite{KAC1-BR}) on
${\mathbb C}[M_{m_0|m_1+n,d}]$ is an element of the superalgebra $End_\mathbb{C}[\mathbb{C}[M_{m_0|m_1+n,d}]]$
that satisfies "Leibniz rule"
$$
D(\textbf{p} \cdot \textbf{q}) = D(\textbf{p}) \cdot \textbf{q} + (-1)^{|D||\textbf{p}|} \textbf{p} \cdot D(\textbf{q}),
$$
for every $\mathbb{Z}_2-$homogeneous of degree $|\textbf{p}|$ element $\textbf{p} \in \mathbb{C}[M_{m_0|m_1+n,d}].$

Given two symbols $a, b \in A_0 \cup A_1 \cup L$, the {\textit{superpolarization}} $D_{a,b}$ of $b$ to $a$
is the unique {\it{left}} superderivation of ${\mathbb C}[M_{m_0|m_1+n,d}]$ of parity $|D_{a,b}| = |a| + |b| \in \mathbb{Z}_2$ such that
\begin{equation}
D_{a,b} \left( (c|j) \right) = \delta_{bc} \ (a|j), \ c \in A_0 \cup A_1 \cup L, \ j = 1, \ldots, d.
\end{equation}

Informally, we say that the operator $D_{a,b}$ {\it{annihilates}} the symbol $b$ and {\it{creates}} the symbol $a$.

\subsubsection{The superalgebra ${\mathbb C}[M_{m_0|m_1+n,d}]$ as a $\mathbf{U}(gl(m_0|m_1+n))$-module}

Since
$$
D_{a,b}D_{c,d} -(-1)^{(|a|+|b|)(|c|+|d|)}D_{c,d}D_{a,b} =
\delta_{b,c}D_{a,d} -(-1)^{(|a|+|b|)(|c|+|d|)}\delta_{a,d}D_{c,b},
$$
the map
$$
e_{a,b} \rightarrow D_{a,b}, \qquad a, b \in A_0 \cup A_1 \cup L.
$$
(that send the elementary
matrices to the corresponding superpolarizations) is an (even) Lie superalgebra morphism from $gl(m_0|m_1+n)$ to $End_\mathbb{C}[\mathbb{C}[M_{m_0|m_1+n,d}]]$
and, hence, it uniquely defines a
morphism (i.e. a representation):
$$
\varrho : \mathbf{U}(gl(m_0|m_1+n)) \rightarrow End_\mathbb{C}[\mathbb{C}[M_{m_0|m_1+n,d}]].
$$

In the following, we always regard the superalgebra $\mathbb{C}[M_{m_0|m_1+n,d}]$ as a $\mathbf{U}(gl(m_0|m_1+n))-$supermodule,
with respect to the action induced by the representation $\varrho$:
$$
e_{a,b} \cdot \mathbf{p} = D_{a,b}(\mathbf{p}),
$$
for every $\mathbf{p} \in {\mathbb C}[M_{m_0|m_1+n,d}].$

We recall that $\mathbb{C}[M_{m_0|m_1+n,d}]$ is  a {\it{semisimple}} $\mathbf{U}(gl(m_0|m_1+n))-$supermodule,
whose irreducible (simple) submodules are - up to isomorphism - {\it{Schur supermodules}} (see, e.g. \cite{Brini1-BR}, \cite{Brini2-BR}, \cite{Bri-BR}. For a more traditional presentation, see also \cite{CW-BR}).

Clearly, $\mathbf{U}(gl(0|n)) = \mathbf{U}(gl(n))$ is a subalgebra of $\mathbf{U}(gl(m_0|m_1+n))$
and
the subalgebra $\mathbb{C}[M_{n,d}]$ is a $\mathbf{U}(gl(n))-$submodule of  $\mathbb{C}[M_{m_0|m_1+n,d}]$.

\subsection{The virtual algebra $Virt(m_0+m_1,n)$ and the virtual
presentations of elements in $\mathbf{U}(gl(n))$}

\subsubsection{The Capelli devirtualization epimorphism $\mathfrak{p} : Virt(m_0+m_1,n) \twoheadrightarrow \mathbf{U}(gl(n))$}\label{Virt}

We say that a product
$$
e_{a_mb_m} \cdots e_{a_1b_1} \in \mathbf{U}(gl(m_0|m_1+n)), \quad a_i, b_i \in A_0 \cup A_1 \cup L, \ i= 1, \ldots, m
$$
is an {\textit{irregular expression}} whenever
  there exists a right subword
$$e_{a_i,b_i} \cdots e_{a_2,b_2} e_{a_1,b_1},$$
$i \leq m$ and a
virtual symbol $\gamma \in A_0 \cup A_1$ such that
\begin{equation}\label{irrexpr-BR}
 \# \{j;  b_j = \gamma, j \leq i \}  >  \# \{j;  a_j = \gamma, j < i \}.
\end{equation}

The meaning of an irregular expression in terms of the action of  $\mathbf{U}(gl(m_0|m_1+n))$ on
the algebra $\mathbb{C}[M_{m_0|m_1+n,d}]$ is that there exists a
virtual symbol $\gamma$ and a right subsequence in which the symbol $\gamma$ is annihilated more times than
it was already created.

\begin{example}
Let $\gamma \in  A_0 \cup A_1$ and $x_i, x_j \in L.$ The product
$$
e_{\gamma,x_j} e_{x_i,\gamma} e_{x_j,\gamma} e_{\gamma,x_i}
$$
is an irregular expression.
\end{example}\qed

Let $\mathbf{Irr}$   be
the {\textit{left ideal}} of $\mathbf{U}(gl(m_0|m_1+n))$ generated by the set of
irregular expressions.

\begin{remark}
The action
of any element of $\mathbf{Irr}$ on the subalgebra $\mathbb C[M_{n,d}] \subset \mathbb{C}[M_{m_0|m_1+n,d}]$ - via the representation $\varrho$ -
is identically zero.
\end{remark}

\begin{proposition}(\cite{Brini3-BR}, \cite{BriUMI-BR})
The sum ${\mathbf{U}}(gl(0|n)) + \mathbf{Irr}$ is a direct sum of vector subspaces of $\mathbf{U}(gl(m_0|m_1+n)).$
\end{proposition}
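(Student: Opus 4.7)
My plan is to establish the equivalent statement $\mathbf{U}(gl(0|n)) \cap \mathbf{Irr} = \{0\}$ by using the representation $\varrho$ as a separating device. The idea is twofold: every element of $\mathbf{Irr}$ annihilates the proper subalgebra $\mathbb{C}[M_{n,d}] \subset \mathbb{C}[M_{m_0|m_1+n,d}]$, while the restriction of $\varrho$ to $\mathbf{U}(gl(0|n)) = \mathbf{U}(gl(n))$ acting on $\mathbb{C}[M_{n,d}]$ is faithful for $d$ sufficiently large. The first ingredient is exactly the Remark immediately preceding the Proposition: a generating irregular expression has a right factor that, when acting via $\varrho$, must annihilate a virtual symbol $\gamma$ which a polynomial in $\mathbb{C}[M_{n,d}]$ (free of virtual symbols) cannot supply, so each generator of $\mathbf{Irr}$ is sent to an operator that kills $\mathbb{C}[M_{n,d}]$; extending by the left ideal structure preserves this property.

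Since the integer $d$ does not enter the definitions of $\mathbf{U}(gl(m_0|m_1+n))$, of $\mathbf{U}(gl(0|n))$, or of $\mathbf{Irr}$, I am free to fix $d \geq n$. Given $u \in \mathbf{U}(gl(0|n)) \cap \mathbf{Irr}$, the Remark yields $\varrho(u)|_{\mathbb{C}[M_{n,d}]} = 0$, while $u \in \mathbf{U}(gl(n))$ acts on $\mathbb{C}[M_{n,d}]$ by the polarization operators $D_{x_i,x_j} = \sum_{k=1}^d x_{ik}\,\partial/\partial x_{jk}$. I would then invoke the classical fact that this polynomial representation of $\mathbf{U}(gl(n))$ is faithful for $d \geq n$, concluding $u = 0$; this gives the required directness of the sum.

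The only nontrivial step is the faithfulness claim, which is the expected main obstacle in writing out full details. I would handle it by a standard principal-symbol argument: ordered PBW monomials $\prod (e_{x_i,x_j})^{m_{ij}}$ in $\mathbf{U}(gl(n))$ are sent by $\varrho$ to differential operators whose principal symbols in the Weyl algebra on $M_{n,d}$ are the commutative monomials $\prod_{i,j}(\sum_k x_{ik}\xi_{jk})^{m_{ij}}$. The $n^2$ polarization symbols $\sum_k x_{ik}\xi_{jk}$, $i,j = 1,\ldots,n$, are algebraically independent in $\mathbb{C}[x_{\cdot\cdot},\xi_{\cdot\cdot}]$ when $d \geq n$ (a direct check using a well-chosen specialization of the matrices $X=(x_{ik})$ and $\Xi=(\xi_{jk})$ to invertible $n\times n$ blocks suffices; alternatively, one invokes $(GL(n),GL(d))$ Howe duality). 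Hence the leading symbols of distinct PBW monomials are linearly independent, and therefore so are the operators themselves, so a nonzero $u \in \mathbf{U}(gl(n))$ cannot be in the kernel of $\varrho$ restricted to $\mathbb{C}[M_{n,d}]$. Granting this faithfulness, the two-paragraph argument above completes the proof.
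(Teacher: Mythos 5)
Your argument is correct. The paper itself gives no proof of this proposition (it is quoted from \cite{Brini3-BR} and \cite{BriUMI-BR}), and your two ingredients are exactly the right ones: directness of the sum is equivalent to $\mathbf{U}(gl(0|n))\cap\mathbf{Irr}=\{0\}$, the Remark preceding the proposition gives $\varrho(u)|_{{\mathbb C}[M_{n,d}]}=0$ for $u\in\mathbf{Irr}$, and faithfulness of the polarization representation of $\mathbf{U}(gl(n))$ on ${\mathbb C}[M_{n,d}]$ for $d\geq n$ (which you are free to arrange, since $d$ enters only through $\varrho$) forces $u=0$. Your principal-symbol proof of faithfulness is sound: the symbols $\sum_k x_{ik}\xi_{jk}$ are the entries of $X\,\Xi^{t}$, which for $d\geq n$ is a dominant map onto $M_{n,n}$, so they are algebraically independent and the top-order symbols of distinct PBW monomials are linearly independent. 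Two small points worth tightening in a full write-up: the justification of the Remark should record the degree count explicitly (acting on a polynomial free of virtual symbols, the $\gamma$-degree after the first $i$ factors would have to be negative by the defining inequality (\ref{irrexpr-BR}), hence the generator acts as zero, and the left-ideal structure propagates this); and one should note that the canonical map $\mathbf{U}(gl(n))\to\mathbf{U}(gl(m_0|m_1+n))$ is injective (super PBW), so that faithfulness of the classical polarization representation indeed kills $u$ as an element of the big algebra.
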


We come now to one of the main notions of the virtual method.

The {\textit{virtual algebra}} $Virt(m_0+m_1,n)$ is the subalgebra
$$
Virt(m_0+m_1,n) = \mathbf{U}(gl(0|n)) \oplus \mathbf{Irr} \subset {\mathbf{U}}(gl(m_0|m_1+n)).
$$

The proof of the following proposition is immediate from the definitions.
\begin{proposition}
The left ideal  $\mathbf{Irr}$ of ${\mathbf{U}}(gl(m_0|m_1+n))$
is a two sided ideal of $Virt(m_0+m_1,n).$
\end{proposition}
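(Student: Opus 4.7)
The plan is to exploit the direct sum decomposition $Virt(m_0+m_1,n) = \mathbf{U}(gl(0|n)) \oplus \mathbf{Irr}$ together with the fact that $\mathbf{Irr}$ is already a left ideal of the full algebra $\mathbf{U}(gl(m_0|m_1+n))$. Since left-ideal-ness is preserved when passing to a subalgebra, $\mathbf{Irr}$ is automatically a left ideal of $Virt(m_0+m_1,n)$; thus the only content is to verify that $\mathbf{Irr}$ is also a right ideal of $Virt(m_0+m_1,n)$.

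To check the right ideal property, I would take $x \in \mathbf{Irr}$ and $y \in Virt(m_0+m_1,n)$, and using the direct sum decomposition write $y = y_0 + y_1$ with $y_0 \in \mathbf{U}(gl(0|n))$ and $y_1 \in \mathbf{Irr}$. The case $y = y_1 \in \mathbf{Irr}$ is immediate because $\mathbf{Irr}$ is a \emph{left} ideal of $\mathbf{U}(gl(m_0|m_1+n))$: the product $x \cdot y_1$ equals $a \cdot y_1$ for $a = x \in \mathbf{U}(gl(m_0|m_1+n))$, so it lies in $\mathbf{Irr}$ by left-absorption. So the heart of the argument is the remaining case $y \in \mathbf{U}(gl(0|n))$.

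For this, I would reduce by linearity to the case where $x$ is a generator of the left ideal, i.e.\ an irregular expression $s = e_{a_m,b_m} \cdots e_{a_1,b_1}$, and where $y$ is a PBW monomial $y = e_{c_k,d_k} \cdots e_{c_1,d_1}$ in the proper generators (so $c_j, d_j \in L$ for all $j$). The claim is then that the concatenation $sy = e_{a_m,b_m} \cdots e_{a_1,b_1} e_{c_k,d_k} \cdots e_{c_1,d_1}$ is again an irregular expression. The key observation, which makes this almost automatic, is that no virtual symbol $\gamma \in A_0 \cup A_1$ appears in any $c_j$ or $d_j$. Consequently, if $i \leq m$ and $\gamma$ witness the irregularity of $s$, then taking the right subword of $sy$ of length $i + k$ gives counts
\[
\#\{j : B_j = \gamma,\ j \leq k+i\} \;=\; \#\{j : b_j = \gamma,\ j \leq i\}
\]
\[
\#\{j : A_j = \gamma,\ j < k+i\} \;=\; \#\{j : a_j = \gamma,\ j < i\},
\]
so the defining inequality \eqref{irrexpr-BR} still holds for $sy$. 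Hence $sy \in \mathbf{Irr}$.

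Extending by $\mathbb{C}$-linearity over the PBW basis of $\mathbf{U}(gl(0|n))$, and then over the generating set of $\mathbf{Irr}$, yields $xy \in \mathbf{Irr}$ for all $y \in \mathbf{U}(gl(0|n))$; combined with the previous case, this gives $xy \in \mathbf{Irr}$ for all $y \in Virt(m_0+m_1,n)$, completing the proof. I do not foresee any real obstacle: the only thing one might worry about is that the PBW decomposition of $y \in \mathbf{U}(gl(0|n))$ keeps all symbols in $L$, but this is guaranteed since $gl(0|n)$ has no virtual generators.
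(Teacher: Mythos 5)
Your proof is correct. The paper offers no argument here (it simply declares the proposition ``immediate from the definitions''), and your verification --- that right-multiplying an irregular expression by a monomial in the proper generators $e_{c,d}$, $c,d\in L$, preserves the witnessing inequality (\ref{irrexpr-BR}) because no virtual symbol occurs among the $c_j,d_j$, combined with left-absorption for the remaining cases --- is exactly the direct check the authors intend.
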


The {\textit{Capelli devirtualization epimorphism}} is the projection
$$
\mathfrak{p} : Virt(m_0+m_1,n) = \mathbf{U}(gl(0|n)) \oplus \mathbf{Irr} \twoheadrightarrow \mathbf{U}(gl(0|n)) = \mathbf{U}(gl(n))
$$
with $Ker(\mathfrak{p}) = \mathbf{Irr}.$

\begin{example}
Let $x \in L, \ \alpha \in A_0.$ The element
$$
e_{x,\alpha}e_{\alpha,x} = - e_{\alpha,x}e_{x,\alpha} + e_{x,x} + e_{\alpha,\alpha}
$$
belongs to the virtual algebra $Virt(m_0+m_1,n)$ and
$$
\mathfrak{p} \big( e_{x,\alpha}e_{\alpha,x} \big) = e_{x,x} \in \mathbf{U}(gl(n)).
$$
\end{example}

\begin{example}\label{first Capelli}
 Let $x, y \in L, \ \alpha \in A_0.$ Then
\begin{align*}
e_{y \alpha} e_{x \alpha} e_{\alpha x} e_{\alpha y}
 &= -e_{y \alpha} e_{\alpha x} e_{x \alpha} e_{\alpha y}
+e_{y \alpha} e_{x x} e_{\alpha
y} +e_{y \alpha} e_{\alpha \alpha} e_{\alpha y}
\\ &= + e_{y \alpha} e_{\alpha x} e_{\alpha y} e_{x \alpha}
-e_{y \alpha} e_{\alpha x} e_{x
y}
\\ &\hphantom{{}=} -e_{x x} e_{\alpha y} e_{y \alpha}
+e_{x x} e_{y y} -e_{x x}
e_{\alpha \alpha}
\\ &\hphantom{{}=} +e_{y \alpha} e_{\alpha y} e_{\alpha \alpha}
+e_{y \alpha} e_{\alpha y}
\\ &= + e_{y \alpha} e_{\alpha x} e_{\alpha y} e_{x \alpha}
-e_{\alpha x} e_{y \alpha}  e_{x y} - e_{y x}e_{x y}
\\ &\hphantom{{}=} -e_{x x} e_{\alpha y} e_{y \alpha}
+e_{x x} e_{y y} -e_{x x}
e_{\alpha \alpha}
\\ &\hphantom{{}=} +e_{y \alpha} e_{\alpha y} e_{\alpha \alpha}
+ e_{\alpha y} e_{y \alpha} + e_{y y} + e_{\alpha \alpha} \in {\mathbf{U}}(gl(m_0|m_1+n)).
\end{align*}
Therefore
$$
e_{y \alpha} e_{x \alpha} e_{\alpha x} e_{\alpha y} \in Virt(m_0+m_1,n)
$$
and
$$
\mathfrak{p} \big( e_{y \alpha} e_{x \alpha} e_{\alpha x} e_{\alpha y} \big) =
-e_{y x} e_{x y} +e_{x x} e_{y y} +e_{y y} \in \mathbf{U}(gl(n)).
$$ \qed
\end{example}

Any element in $\textbf{M} \in Virt(m_0+m_1,n)$ defines an element in
$\textbf{m} \in \mathbf{U}(gl(n))$ - via the map $\mathfrak{p}$ -
 and $\textbf{M}$ is called a \textit{virtual
presentation} of $\textbf{m}$.

Since the map $\mathfrak{p}$  a surjection, any element
$\mathbf{m} \in \mathbf{U}(gl(n))$ admits several virtual
presentations. In the sequel, we even take virtual presentations
as the \emph{true definition} of special elements in $\mathbf{U}(gl(n)),$ and this method will turn out to be quite effective.

\begin{example}{\textbf{(A virtual presentation of the Capelli determinant)}}\label{Capelli determinants-BR}
As a generalization of  Example \ref{first Capelli}, we describe a ``\emph{monomial}'' virtual presentation in $Virt(m_0+m_1,n)$
of the classical Capelli determinant in $\mathbf{U}(gl(n)).$

Let $\alpha \in A_0$. The monomial element
\begin{equation}\label{C}
C = e_{x_n,\alpha} \cdots e_{x_2,\alpha} e_{x_1,\alpha} \cdot e_{\alpha, x_1} e_{\alpha, x_2}
 \cdots e_{\alpha, x_n} \in {\mathbf{U}}(gl(m_0|m_1+n))
\end{equation}
belongs to the virtual algebra $Virt(m_0|m_1+n)$.
The image of the element $C$ under the Capelli devirtualization epimorphism $\mathfrak{p}$
equals the {\it{column determinant}}\footnote{The symbol $\mathbf{cdet}$
denotes the column determinat of a matrix $A = [a_{ij}]$ with noncommutative entries:
$\mathbf{cdet} (A) = \sum_{\sigma} \ (-1)^{|\sigma|}  \ a_{\sigma(1), 1}a_{\sigma(2), 2} \cdots a_{\sigma(n), n}.$}
 $$
\mathbf{H}_n(n) = \mathbf{cdet}\left(
 \begin{array}{cccc}
 e_{x_1,x_1}+(n-1) & e_{x_1,x_2} & \ldots  & e_{x_1,x_n} \\
 e_{x_2,x_1} & e_{x_2,x_2}+(n-2) & \ldots  & e_{x_2,x_n}\\
 \vdots  &    \vdots                            & \vdots &  \\
e_{x_n,x_1} & e_{x_n,x_2} & \ldots & e_{x_n,x_n}\\
 \end{array}
 \right) \in \mathbf{U}(gl(n)).
 $$
This result is a special case of the result that we called the  ``Laplace expansion for Capelli rows'' ( \cite{BRT-BR} Theorem $2$,
\cite{Bri-BR} Theorem $6.3$). A sketchy proof of it
can also be found in Koszul \cite{Koszul-BR}. \qed
\end{example}

The next results will play a crucial role in the study of {\textit{central}} elements of $\mathbf{U}(gl(n))$.
\begin{proposition}\label{rappresentazione aggiunta-BR}
For every $e_{x_i, x_j} \in gl(n) \subset gl(m_0|m_1+n)$,   let $ad(e_{x_i, x_j})$ denote its adjoint action
on  $Virt(m_0+m_1,n)$; the ideal $\mathbf{Irr}$ is $ad(e_{x_i, x_j})-$invariant. Then
\begin{equation}
\mathfrak{p} \left( ad(e_{x_i, x_j})( \mathbf{m} ) \right) =  ad(e_{x_i, x_j}) \left( \mathfrak{p} ( \mathbf{m} ) \right) ,
\qquad  \mathbf{m} \in Virt(m_0+m_1,n).
\end{equation}
\end{proposition}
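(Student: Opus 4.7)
The plan is to exploit the direct sum decomposition $Virt(m_0+m_1,n) = \mathbf{U}(gl(n)) \oplus \mathbf{Irr}$ and the fact that $e_{x_i,x_j}$ lies in the proper summand $\mathbf{U}(gl(n)) \subset Virt(m_0+m_1,n)$. Since $|e_{x_i,x_j}| = |x_i|+|x_j| = 1+1 = 0 \in \mathbb{Z}_2$, the adjoint action coincides with the ordinary commutator $ad(e_{x_i,x_j})(\mathbf{m}) = e_{x_i,x_j}\mathbf{m} - \mathbf{m}\,e_{x_i,x_j}$, so no super-signs ever enter the argument.

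First I would establish the invariance claim. Given $\mathbf{r} \in \mathbf{Irr}$, both products $e_{x_i,x_j}\cdot \mathbf{r}$ and $\mathbf{r}\cdot e_{x_i,x_j}$ lie in $\mathbf{Irr}$, because $e_{x_i,x_j}$ is an element of $Virt(m_0+m_1,n)$ and $\mathbf{Irr}$ is a \emph{two-sided} ideal of $Virt(m_0+m_1,n)$ (the proposition stated just before the virtual algebra was defined). Hence $ad(e_{x_i,x_j})(\mathbf{r}) \in \mathbf{Irr}$, proving the first assertion.

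Next I would deduce the intertwining identity. Every $\mathbf{m} \in Virt(m_0+m_1,n)$ decomposes uniquely as $\mathbf{m} = \mathbf{u} + \mathbf{r}$ with $\mathbf{u} \in \mathbf{U}(gl(n))$ and $\mathbf{r} \in \mathbf{Irr}$, and by definition $\mathfrak{p}(\mathbf{m}) = \mathbf{u}$. Since $\mathbf{U}(gl(n))$ is a subalgebra, $ad(e_{x_i,x_j})(\mathbf{u}) \in \mathbf{U}(gl(n))$; and by the previous step $ad(e_{x_i,x_j})(\mathbf{r}) \in \mathbf{Irr}$. Therefore the decomposition
\begin{equation*}
ad(e_{x_i,x_j})(\mathbf{m}) = ad(e_{x_i,x_j})(\mathbf{u}) + ad(e_{x_i,x_j})(\mathbf{r})
\end{equation*}
is already the splitting of $ad(e_{x_i,x_j})(\mathbf{m})$ along $\mathbf{U}(gl(n)) \oplus \mathbf{Irr}$, and applying $\mathfrak{p}$ kills the second summand, yielding $\mathfrak{p}(ad(e_{x_i,x_j})(\mathbf{m})) = ad(e_{x_i,x_j})(\mathbf{u}) = ad(e_{x_i,x_j})(\mathfrak{p}(\mathbf{m}))$.

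There is no substantial obstacle here: the whole argument is a formal consequence of the ideal property of $\mathbf{Irr}$ in $Virt(m_0+m_1,n)$ and the direct sum decomposition defining $\mathfrak{p}$. The only place where one might be tempted to do something more elaborate is a direct combinatorial check that the irregularity condition (\ref{irrexpr-BR}) is preserved under bracketing with $e_{x_i,x_j}$ (the $a$-slot and $b$-slot symbols stay in $L$ after each term of the Leibniz expansion, since $x_i, x_j$ are proper symbols), but this is already subsumed in the two-sided ideal property and need not be revisited.
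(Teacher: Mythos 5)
Your proof is correct, and since the paper states this proposition without giving any proof, your argument supplies exactly the formal verification the authors leave implicit: the invariance of $\mathbf{Irr}$ follows from the fact that $\mathbf{Irr}$ is a two-sided ideal of $Virt(m_0+m_1,n)$ containing $e_{x_i,x_j}\,\mathbf{r}$ and $\mathbf{r}\,e_{x_i,x_j}$ for the even element $e_{x_i,x_j}\in\mathbf{U}(gl(n))\subset Virt(m_0+m_1,n)$, and the intertwining identity is then immediate from the direct sum decomposition $Virt(m_0+m_1,n)=\mathbf{U}(gl(n))\oplus\mathbf{Irr}$ defining $\mathfrak{p}$. No gaps.
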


\begin{corollary}\label{invarianti virtuali}
The Capelli epimorphism image of an element of $Virt(m_0|m_1+n)$ that is an invariant for the adjoint action of
$gl(n)$ is in the center $\boldsymbol{\zeta}(n)$ of $\mathbf{U}(gl(n))$.
\end{corollary}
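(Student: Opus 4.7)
The plan is to derive the corollary directly from Proposition \ref{rappresentazione aggiunta-BR} (the intertwining property of the Capelli devirtualization epimorphism with the adjoint action of $gl(n)$), using the standard characterization of the center $\boldsymbol{\zeta}(n)$ as the space of $ad(gl(n))$-invariants in $\mathbf{U}(gl(n))$.

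Concretely, suppose $\mathbf{m} \in Virt(m_0+m_1,n)$ satisfies $ad(e_{x_i,x_j})(\mathbf{m}) = 0$ for all $i, j = 1, \ldots, n$. First I would invoke Proposition \ref{rappresentazione aggiunta-BR} to obtain
\[
ad(e_{x_i,x_j})\bigl(\mathfrak{p}(\mathbf{m})\bigr) = \mathfrak{p}\bigl(ad(e_{x_i,x_j})(\mathbf{m})\bigr) = \mathfrak{p}(0) = 0
\]
for every generator $e_{x_i,x_j}$ of $gl(n) \subset gl(m_0|m_1+n)$.

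Next I would translate this vanishing into centrality: since the elements $e_{x_i,x_j}$ linearly span $gl(n)$, the element $\mathfrak{p}(\mathbf{m}) \in \mathbf{U}(gl(n))$ commutes with every element of the Lie algebra $gl(n)$. Because $gl(n)$ generates $\mathbf{U}(gl(n))$ as an associative algebra, $\mathfrak{p}(\mathbf{m})$ commutes with every element of $\mathbf{U}(gl(n))$, and hence lies in $\boldsymbol{\zeta}(n)$.

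There is no serious obstacle here; the entire content of the statement is contained in Proposition \ref{rappresentazione aggiunta-BR}. The only subtlety worth flagging is the implicit verification that the adjoint action of $gl(n)$ really does preserve the ideal $\mathbf{Irr}$ (and hence the subalgebra $Virt(m_0+m_1,n) = \mathbf{U}(gl(0|n)) \oplus \mathbf{Irr}$), which makes the intertwining statement meaningful; but this is already asserted in Proposition \ref{rappresentazione aggiunta-BR} and follows from the fact that commuting on the left with a proper symbol $e_{x_i,x_j}$ does not alter the balance between creations and annihilations of virtual symbols that defines irregularity in \eqref{irrexpr-BR}.
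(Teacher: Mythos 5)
Your proof is correct and is exactly the argument the paper intends: the corollary is an immediate consequence of Proposition \ref{rappresentazione aggiunta-BR} together with the standard identification of $\boldsymbol{\zeta}(n)$ with the $ad(gl(n))$-invariants of $\mathbf{U}(gl(n))$, and the paper supplies no further proof. Nothing is missing.
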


\begin{example}
Recall that
$$ad(e_{x_i, x_j})\left( e_{x_h, \alpha} \right) = \delta_{j h}e_{x_i, \alpha},$$
$$ad(e_{x_i, x_j})\left( e_{\alpha, x_k} \right) = - \delta_{k i}e_{\alpha, x_j},$$
for every virtual symbol $\alpha$, and that $ad(e_{x_i, x_j})$ acts as a derivation, for every $i, j = 1,2, \ldots, n.$

The  monomial $C$ of Example \ref{Capelli determinants-BR}, Eq.(\ref{C}) is annihilated by
$ad(e_{x_i, x_j}), \ i \neq j,$ by skew-symmetry. Furthermore, $ad(e_{x_i, x_i})\left( C \right) = C - C = 0, \ i = 1, 2, \ldots, n;$
hence, $C$ is an invariant for the adjoint action of $gl(n)$.

Since
$\mathfrak{p} \left( C \right) = \mathbf{H}_n(n),$ the Capelli determinant $\mathbf{H}_n(n)$ is central in $\mathbf{U}(gl(n)),$
by Corollary \ref{invarianti virtuali}. \qed
\end{example}

\subsubsection{The action of  $Virt(m_0+m_1,n)$ on the subalgebra $\mathbb{C}[M_{n,d}]$}

From the representation-theoretic point of view, the core of the {\textit{method of virtual variables}}
lies in the following result.

\begin{theorem}\label{Capelli epimorphism}
The action of $Virt(m_0+m_1,n)$ leaves invariant the
subalgebra  $\mathbb{C}[M_{n,d}] \subseteq
\mathbb{C}[M_{m_0|m_1+n,d}],$ and, therefore, the action of $Virt(m_0+m_1,n)$ on $\mathbb{C}[M_{n,d}]$
is well defined.
Furthermore, for every $\mathbf{v} \in Virt(m_0+m_1,n)$, its action on $\mathbb{C}[M_{n,d}]$ equals
the action of $\mathfrak{p}(\mathbf{v}) \in \mathbf{U}(gl(n)).$
\end{theorem}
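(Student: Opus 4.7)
The plan is to use the direct sum decomposition
$$
Virt(m_0+m_1,n) = \mathbf{U}(gl(0|n)) \oplus \mathbf{Irr}
$$
and dispatch the two summands separately. Every $\mathbf{v} \in Virt(m_0+m_1,n)$ writes uniquely as $\mathbf{v} = \mathbf{u} + \mathbf{w}$ with $\mathbf{u} \in \mathbf{U}(gl(0|n)) = \mathbf{U}(gl(n))$ and $\mathbf{w} \in \mathbf{Irr}$, and by definition $\mathfrak{p}(\mathbf{v}) = \mathbf{u}$. Thus it suffices to verify two claims: (i) the action of any $\mathbf{u} \in \mathbf{U}(gl(n))$ on $\mathbb{C}[M_{m_0|m_1+n,d}]$ preserves the subalgebra $\mathbb{C}[M_{n,d}]$ and restricts there to the canonical action of $\mathbf{u}$; (ii) every $\mathbf{w} \in \mathbf{Irr}$ acts as the zero operator on $\mathbb{C}[M_{n,d}]$.

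Claim (i) is essentially a matter of unwinding the definitions. The subalgebra $\mathbf{U}(gl(n)) \subset \mathbf{U}(gl(m_0|m_1+n))$ is generated by the elements $e_{x_i,x_j}$ with $x_i,x_j \in L$, and the corresponding superpolarization $D_{x_i,x_j}$ replaces each occurrence of a variable $(x_j|k)$ by $(x_i|k)$. Since such an operator only involves proper symbols, it sends $\mathbb{C}[M_{n,d}]$ into itself and its restriction coincides with the ordinary polarization operator that implements the standard $\mathbf{U}(gl(n))$-action on $\mathbb{C}[M_{n,d}]$. By multiplicativity of $\varrho$, the same holds for every monomial in the $e_{x_i,x_j}$ and hence for every $\mathbf{u} \in \mathbf{U}(gl(n))$.

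Claim (ii) is the combinatorial heart, and I would invoke the Remark preceding the theorem. Since $\mathbf{Irr}$ is a left ideal of $\mathbf{U}(gl(m_0|m_1+n))$ generated by irregular monomials, and since $g M \cdot f = g \cdot (M \cdot f) = 0$ whenever $M \cdot f = 0$, it suffices to show that an irregular monomial $e_{a_m,b_m} \cdots e_{a_1,b_1}$ annihilates every $f \in \mathbb{C}[M_{n,d}]$. Now $f$ contains no virtual letter, so as we apply the composition of superpolarizations $D_{a_m,b_m} \circ \cdots \circ D_{a_1,b_1}$ right-to-left, the count of each virtual symbol $\gamma$ present in the intermediate polynomial equals the number of past creations ($a_j = \gamma$) minus the number of past annihilations ($b_j = \gamma$). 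Condition (\ref{irrexpr-BR}) at some index $i$ asserts precisely that up to step $i$ the annihilations of $\gamma$ already exceed the creations; therefore at the moment $D_{a_i,b_i}$ (with $b_i = \gamma$) is applied, no monomial in the intermediate polynomial contains any factor $(\gamma|k)$, and the superderivation returns zero by the Leibniz rule. Hence the entire composition is the zero operator on $\mathbb{C}[M_{n,d}]$.

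Combining (i) and (ii) by linearity gives $\mathbf{v}\cdot \mathbb{C}[M_{n,d}] = \mathbf{u}\cdot \mathbb{C}[M_{n,d}] \subseteq \mathbb{C}[M_{n,d}]$, and the induced operator on $\mathbb{C}[M_{n,d}]$ agrees with that of $\mathfrak{p}(\mathbf{v}) = \mathbf{u}$. The only genuinely nontrivial step is the vanishing on $\mathbf{Irr}$, where the bookkeeping of virtual creations versus annihilations encoded in the definition of irregular expression does all the work; the rest is formal manipulation of the representation $\varrho$, the projection $\mathfrak{p}$, and the direct sum decomposition.
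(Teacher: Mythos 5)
Your proof is correct and is exactly the argument the paper intends: the direct-sum decomposition $Virt(m_0+m_1,n)=\mathbf{U}(gl(0|n))\oplus\mathbf{Irr}$, the observation that proper polarizations restrict to the canonical action, and the Remark that $\mathbf{Irr}$ annihilates $\mathbb{C}[M_{n,d}]$, whose degree-counting justification you supply. One cosmetic point: condition (\ref{irrexpr-BR}) does not literally force $b_i=\gamma$ at the witnessing index $i$, but choosing the minimal such $i$ does, and in any case your bookkeeping shows the intermediate $\gamma$-degree would otherwise have to be negative, so the composite vanishes either way.
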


Therefore, instead of studying the action of an element in $\mathbf{U}(gl(n))$, one can study the action of a virtual presentation of it in
$Virt(m_0|m_1+n)$. The advantage of virtual presentations is that they
are frequently of monomial form,   admit quite transparent interpretations and are  much  easier to be dealt with
(see, e.g. \cite{Brini1-BR}, \cite{Brini2-BR}, \cite{BRT-BR}, \cite{Bri-BR}, \cite{BriUMI-BR}).

A prototypical instance of this method is provided by the celebrated \emph{Capelli identity} \cite{Cap1-BR}, \cite{Weyl-BR},
\cite{Howe-BR}, \cite{HU-BR}, \cite{Umeda-BR}. From Example \ref{Capelli determinants-BR}, it follows that the action of the Capelli
determinant $\mathbf{H}_n(n)$
on a form $f \in \mathbb{C}[M_{n,d}]$ is the same as the action of its monomial virtual presentation, and this leads to
a few lines proof of the identity \cite{BRT-BR}, \cite{BriUMI-BR}.

\subsubsection{Balanced monomials as elements of the virtual algebra $Virt(m_0+m_1,n)$}\label{balanced monomial}

In order to make the virtual variables method  effective, we need to exhibit a class of nontrivial elements that belong
to $Virt(m_0+m_1,n)$.

A quite relevant class of such elements is provided
by \textit{balanced monomials}.

In plain words, a balanced monomial is product of two or more factors  where the
rightmost one  \textit{annihilates}
the $k$ proper symbols $ x_{j_1}, \ldots, x_{j_k}$ and
\textit{creates} some virtual symbols;
 the leftmost one  \textit{annihilates} all the virtual symbols
and \textit{creates} the $k$ proper symbols $ x_{i_1}, \ldots, x_{i_k}$;
between these two factors, there might be further factors that annihilate
 and create  virtual symbols only.

In a formal way,  balanced monomials are  elements of the algebra ${\mathbf{U}}(gl(m_0|m_1+n))$
 of the forms:
\begin{itemize}\label{defbalanced monomials-BR}
\item $e_{x_{i_1},\gamma_{p_1}} \cdots e_{x_{i_k},\gamma_{p_k}} \cdot
e_{\gamma_{p_1},x_{j_1}} \cdots e_{\gamma_{p_k},x_{j_k}},$
\item
$e_{x_{i_1},\theta_{q_1}} \cdots e_{x_{i_k},\theta_{q_k}} \cdot
e_{\theta_{q_1},\gamma_{p_1}} \cdots e_{\theta_{q_k},\gamma_{p_k}} \cdot
e_{\gamma_{p_1},x_{j_1}} \cdots e_{\gamma_{p_k},x_{j_k}},$
\item and so on,
\end{itemize}
where
$x_{i_1}, \ldots, x_{i_k}, x_{j_1}, \ldots, x_{j_k} \in L,$
i.e., the $x_{i_1}, \ldots, x_{i_k}, x_{j_1}, \ldots, x_{j_k}$ are $k$
proper  symbols.

The next result is the (superalgebraic) formalization of the argument developed by Capelli in
\cite{Cap4-BR}, CAPITOLO I, §X.Metodo delle variabili ausiliarie, page $55$ ff.

\begin{proposition}\emph{(\cite{Brini1-BR}, \cite{Brini2-BR}, \cite{BRT-BR}, \cite{Bri-BR}, \cite{BriUMI-BR})}
Every balanced monomial belongs to $Virt(m_0+m_1,n)$. Hence
its image under the Capelli epimorphism $\mathfrak{p}$ belongs to $\mathbf{U}(gl(n)).$
\end{proposition}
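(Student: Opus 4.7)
The plan is a double induction: an outer induction on the ``depth'' $d$ of the balanced monomial (the number of intermediate layers of purely virtual-to-virtual operators sitting between the topmost and bottommost layers), and an inner induction on the number $k$ of proper pairs $(x_{i_r}, x_{j_r})$ in the outermost layers. Each inductive step is driven by the supercommutation relations of $gl(m_0|m_1+n)$ together with the irregularity criterion of Eq. (\ref{irrexpr-BR}).

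For the outer induction, consider a balanced monomial containing a middle layer $e_{\theta_{q_1},\gamma_{p_1}} \cdots e_{\theta_{q_k},\gamma_{p_k}}$ sitting just above the bottom layer $e_{\gamma_{p_1},x_{j_1}} \cdots e_{\gamma_{p_k},x_{j_k}}$. I would commute the middle layer rightward past the bottom layer; the only nonzero supercommutator is $[e_{\theta_{q_r},\gamma_{p_r}}, e_{\gamma_{p_s},x_{j_s}}] = \delta_{r,s}\, e_{\theta_{q_r},x_{j_s}}$, which replaces a pair of adjacent middle/bottom factors by a single bottom-layer factor and produces a balanced monomial of depth $d-1$. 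Each reversed product $\pm\, e_{\gamma_{p_s},x_{j_s}} e_{\theta_{q_r},\gamma_{p_r}} \cdots$ arising from the swap is irregular, since the factor $e_{\theta_{q_r},\gamma_{p_r}}$ now annihilates the virtual symbol $\gamma_{p_r}$ while its right-subword contains no prior creation of $\gamma_{p_r}$. Hence, modulo $\mathbf{Irr}$, the depth strictly decreases, and we may assume the two-layer base form.

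For the base case $\mathbf{M} = e_{x_{i_1},\gamma_{p_1}} \cdots e_{x_{i_k},\gamma_{p_k}} \cdot e_{\gamma_{p_1},x_{j_1}} \cdots e_{\gamma_{p_k},x_{j_k}}$, I would commute the leftmost bottom factor $e_{\gamma_{p_1},x_{j_1}}$ leftward past the top layer. Each super-swap past $e_{x_{i_r},\gamma_{p_r}}$ yields: (i) a reversed product that is irregular because, in the new right-subword, $e_{x_{i_r},\gamma_{p_r}}$ annihilates a $\gamma_{p_r}$ that has not yet been created to its right; and (ii) a bracket contribution $\delta_{\gamma_{p_r},\gamma_{p_1}}\, e_{x_{i_r},x_{j_1}}$ together with its companion $\pm \delta_{x_{i_r},x_{j_1}}\, e_{\gamma_{p_1},\gamma_{p_r}}$. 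The first contribution is a purely proper generator, and when multiplied by the truncated monomial obtained by deleting $e_{x_{i_r},\gamma_{p_r}}$ and $e_{\gamma_{p_1},x_{j_1}}$ one recognizes a balanced monomial on $k-1$ proper pairs, handled by the inner hypothesis; the companion contribution is a balanced monomial of higher depth on $k-1$ pairs, handled by the outer hypothesis.

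The main obstacle is the careful sign bookkeeping forced by the parities $|\alpha|=0$, $|\beta|=|x|=1$, since the sign in each supercommutation depends on whether $\gamma_{p_r} \in A_0$ or $A_1$, and on cumulative parities as the bottom factor migrates leftward. One must also verify that collapsing a matched virtual index preserves the balanced structure of each residual monomial, i.e., that after removing one matched pair $(e_{x_{i_r},\gamma_{p_r}}, e_{\gamma_{p_1},x_{j_1}})$ the multisets of virtual symbols created and annihilated in the remainder still coincide layer by layer. Once this is checked, the induction closes and we obtain $\mathbf{M} \in \mathbf{U}(gl(0|n)) \oplus \mathbf{Irr} = Virt(m_0+m_1,n)$; the second assertion is then immediate from the definition of the Capelli devirtualization epimorphism $\mathfrak{p}$.
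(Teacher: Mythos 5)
The paper itself offers no proof of this proposition --- it is quoted from \cite{Brini1-BR}, \cite{Brini2-BR}, \cite{BRT-BR}, \cite{Bri-BR}, \cite{BriUMI-BR} --- so your sketch must stand on its own. The mechanism you invoke (telescoping supercommutations, discarding the fully swapped word because it contains an irregular right subword, and contracting each matched pair of virtual factors into a single generator) is the correct one and is indeed the standard formalization of Capelli's auxiliary-variable trick. However, as written the induction does not close, for two concrete reasons.

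First, the ordering of your double induction is circular: in the two-layer case the companion bracket term $\pm\,\delta_{x_{i_r},x_{j_1}}\,e_{\gamma_{p_1},\gamma_{p_r}}$ produces a word of \emph{strictly larger} depth and smaller $k$, and with depth as the outer variable and $k$ as the inner one that case is covered by no available inductive hypothesis. Second, and more seriously, the words produced by your reductions are not balanced monomials in the sense of the paper's definition: the contracted generator $e_{x_{i_r},x_{j_1}}$ (or $e_{\gamma_{p_1},\gamma_{p_r}}$) sits in the \emph{middle} of the top layer, and extracting it generates further brackets (e.g.\ $[e_{x_{i_s},\gamma_{p_s}},e_{x_{i_r},x_{j_1}}]\neq 0$ when $i_s=j_1$), so your induction parameters ``depth'' and ``number of proper pairs'' are not even defined for the intermediate terms. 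Both defects are repaired at once by enlarging the class to all products $e_{a_m,b_m}\cdots e_{a_1,b_1}$ that are \emph{regular} (no right subword violates Eq.~(\ref{irrexpr-BR})) and \emph{balanced} (each virtual symbol is created and annihilated equally often), and by inducting on the single quantity ``total number of virtual-symbol occurrences,'' which strictly decreases under every bracket contraction and is unchanged under a pure swap. Two smaller points: the Kronecker delta in your middle-layer bracket must be $\delta_{\gamma_{p_r},\gamma_{p_s}}$ rather than $\delta_{r,s}$, since the virtual symbols of a balanced monomial may repeat (as they do in $e_{S,C^*_{\lambda}}$); and only the \emph{fully} reversed word is guaranteed to be irregular --- the partially swapped intermediate words in general are not, which is harmless only because they are further decomposed rather than retained.
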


In plain words, the action of a balanced monomial on the subalgebra  $\mathbb{C}[M_{n,d}]$ equals
the action of a suitable element of $\mathbf{U}(gl(n)).$

\subsection{Two special classes of elements in $Virt(m_0+m_1,n)$ and their images in ${\mathbf{U}}(gl(n))$}\label{Capbit}

We will introduce two classes of remarkable elements of the enveloping algebra ${\mathbf{U}}(gl(n))$, that
we call {\textit{Capelli bitableaux}} and {\textit{Young-Capelli bitableaux}}, respectively.

Capelli bitableaux are the analogues in ${\mathbf{U}}(gl(n))$ of bitableaux in the polynomial algebra
$\mathbb{C}[M_{n,d}]$, as well as Young-Capelli bitableaux are the analogues in ${\mathbf{U}}(gl(n))$ of
right symmetrized bitableaux. Besides this analogy, their meaning lies deeper, as we shall see in
Section \ref{sect Kosz}.

\subsubsection{Bitableaux monomials in ${\mathbf{U}}(gl(m_0+m_1,n))$}\label{Bitableaux monomials}
Let $S$ and $T$ be two Young tableaux of same shape $\lambda \vdash h$ on
the  alphabet $ A_0 \cup A_1 \cup L$:

\begin{equation}\label{bitableaux}
S = \left(
\begin{array}{llllllllllllll}
z_{i_1}  \ldots    \ldots     \ldots     z_{i_{\lambda_1}}     \\
z_{j_1}   \ldots  \ldots               z_{j_{\lambda_2}} \\
 \ldots  \ldots   \\
z_{s_1} \ldots z_{s_{\lambda_p}}
\end{array}
\right), \qquad
T = \left(
\begin{array}{llllllllllllll}
z_{h_1}  \ldots    \ldots     \ldots     z_{h_{\lambda_1}}    \\
z_{k_1}   \ldots  \ldots               z_{k_{\lambda_2}} \\
 \ldots  \ldots   \\
z_{t_1} \ldots z_{t_{\lambda_p}}
\end{array}
\right).
\end{equation}

To the pair $(S,T)$, we associate the {\it{bitableau monomial}}:
\begin{equation}\label{BitMon}
e_{S,T} =
e_{z_{i_1}, z_{h_1}}\cdots e_{z_{i_{\lambda_1}}, z_{h_{\lambda_1}}}
e_{z_{j_1}, z_{k_1}}\cdots e_{z_{j_{\lambda_2}}, z_{k_{\lambda_2}}}
 \cdots  \cdots
e_{z_{s_1}, z_{t_1}}\cdots e_{z_{s_{\lambda_p}}, z_{t_{\lambda_p}}}
\end{equation}
in ${\mathbf{U}}(gl(m_0|m_1+n)).$

By expressing the Young tableaux $S, T$ in the functional form (see subsection \ref{comb Young tab}):
$$
S : \underline{h} \rightarrow A_0 \cup A_1 \cup L, \quad T : \underline{h} \rightarrow A_0 \cup A_1 \cup L,
$$
the bitableau monomial $e_{S,T}$ of Eq. (\ref{BitMon}) becomes:
$$
e_{S,T} = e_{S(1),T(1)}e_{S(2),T(2)} \cdots e_{S(h),T(h)}.
$$

Let us denote by $\alpha_1, \ldots, \alpha_p \in A_0$, $\beta_1, \ldots, \beta_{\lambda_1} \in A_1$
two \emph{arbitrary} families of \emph{mutually distinct positive and negative virtual symbols}, respectively (see Remark \ref{symbols}).
Set
\begin{equation}\label{Deruyts and Coderuyts}
D_{\lambda}^* = \left(
\begin{array}{llllllllllllll}
\beta_1  \ldots    \ldots     \ldots     \beta_{\lambda_1}     \\
\beta_1   \ldots  \ldots               \beta_{\lambda_2} \\
 \ldots  \ldots   \\
\beta_1 \ldots \beta_{\lambda_p}
\end{array}
\right), \qquad
C_{\lambda}^* = \left(
\begin{array}{llllllllllllll}
\alpha_1  \ldots    \ldots     \ldots     \alpha_1    \\
\alpha_2   \ldots  \ldots               \alpha_2 \\
 \ldots  \ldots   \\
\alpha_p \ldots \alpha_p
\end{array} \right).
\end{equation}

The tableaux of  kind (\ref{Deruyts and Coderuyts}) are called  {\it{virtual Deruyts and Coderuyts}} tableaux
of shape $\lambda,$ respectively.

\subsubsection{Capelli bitableaux and Young-Capelli bitableaux}\label{Capbit sub}
Given a pair of Young tableaux  $S, T$ of the same shape $\lambda$ on the proper alphabet $L$, consider the elements
\begin{equation}
e_{S,C_{\lambda}^*} \ e_{C_{\lambda}^*,T} \in {\mathbf{U}}(gl(m_0|m_1+n)),
\end{equation}
\begin{equation}
e_{S,C_{\lambda}^*} \ e_{C_{\lambda}^*,D_{\lambda}^*}  \   e_{D_{\lambda}^*,T} \in {\mathbf{U}}(gl(m_0|m_1+n)).
\end{equation}

Since elements (\ref{determinantal}) and (\ref{rightYoungCapelli}) are balanced monomials in
${\mathbf{U}}(gl(m_0|m_1+n))$, then they belong to the subalgebra $Virt(m_0+m_1,n)$ (Section \ref{balanced monomial}).

Hence, we can consider their images in ${\mathbf{U}}(gl(n))$ with respect to the Capelli epimorphism $\mathfrak{p}$.

We set
\begin{equation}\label{determinantal}
 [S|T] = \mathfrak{p} \Big( e_{S,C_{\lambda}^*} \ e_{C_{\lambda}^*,T}    \Big)   \in {\mathbf{U}}(gl(n)),
\end{equation}
and call the element $[S|T]$ a {\textit{Capelli bitableau}}.

We set
\begin{equation}\label{rightYoungCapelli}
 [S| \fbox{$T$}]  = \mathfrak{p} \Big( e_{S,C_{\lambda}^*} \ e_{C_{\lambda}^*,D_{\lambda}^*}  \   e_{D_{\lambda}^*,T} \Big)
          \in {\mathbf{U}}(gl(n)).
\end{equation}
and call the element $[S| \fbox{$T$}] $ a {\textit{Young-Capelli bitableau}}.

\begin{remark}

The elements defined in \emph{(\ref{determinantal})} and \emph{(\ref{rightYoungCapelli})}  do not
depend on the choice of the virtual Deruyts and Coderuyts tableaux $D_{\lambda}^*$
and $C_{\lambda}^*$.
\end{remark}\qed

The next result will play a crucial role subsection \ref{sect YC Bit} below.
In plain words, it states that  {\textit{Young-Capelli bitableaux} expand into
{\textit{Capelli bitableaux} in the enveloping algebra $\mathbf{U}(gl(n))$ just in the same
formal way as {\textit{right symmetrized bitableaux} expand into
{\textit{bitableaux} in the polynomial algebra ${\mathbb C}[M_{n,d}]$ (subsection \ref{symm bit}).

\begin{proposition}\label{YC bit and C bit}
Let $S, T$ be Young tableaux, $sh(S) = sh(T)$. The  following identity holds
in the enveloping algebra $\mathbf{U}(gl(n))$:
$$
[S|\fbox{$T$})] = \sum_{\overline{T}} \ [S|\overline{T}],
$$
where the sum is extended over {\textit{all}} $\overline{T}$ column permuted of $T$ (hence, repeated entries in
a column give rise to multiplicities).
\end{proposition}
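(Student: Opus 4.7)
The plan is to lift the claimed equality to a congruence in the virtual algebra $Virt(m_0+m_1,n)$ and then project by $\mathfrak{p}$. By the definitions \eqref{determinantal} and \eqref{rightYoungCapelli}, and because $\mathbf{Irr}$ is a left ideal of $\mathbf{U}(gl(m_0|m_1+n))$, it is enough to establish
\[
e_{C_\lambda^*,\,D_\lambda^*}\;e_{D_\lambda^*,\,T}\;\equiv\;\sum_{\overline{T}}\,e_{C_\lambda^*,\,\overline{T}}\pmod{\mathbf{Irr}},
\]
the sum extended over all $\overline{T}$ column permuted from $T$. Left multiplication by $e_{S,\,C_\lambda^*}$ preserves this congruence, and applying $\mathfrak{p}$ then yields $[S|\fbox{$T$}]=\sum_{\overline{T}}[S|\overline{T}]$ directly.

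To reduce the above congruence to a one-column identity I would observe that operators associated to distinct column indices of the shape super-commute: $e_{\alpha_i,\beta_j}$ and $e_{\alpha_{i'},\beta_{j'}}$ with $j\ne j'$ have vanishing super-bracket (both odd, so they anticommute), the mixed pair $e_{\alpha_i,\beta_j}$ versus $e_{\beta_{j'},t}$ with $j\ne j'$ has zero super-bracket as well (odd against even, hence they commute), and the even operators $e_{\beta_j,t}$, $e_{\beta_{j'},t'}$ commute for every $j,j'$. Reordering the row-major products $e_{C_\lambda^*,D_\lambda^*}$ and $e_{C_\lambda^*,\overline{T}}$ into column-major form therefore produces one and the same global sign $\epsilon(\lambda)$, depending only on the shape $\lambda$; this sign cancels between the two sides. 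The residual, column-wise identity to be proved is, for every virtual $\beta\in A_1$ and every sequence $t_1,\dots,t_r\in L$ (the entries, top-to-bottom, of a generic column of length $r$),
\[
\bigl(e_{\alpha_1,\beta}\cdots e_{\alpha_r,\beta}\bigr)\bigl(e_{\beta,t_1}\cdots e_{\beta,t_r}\bigr)\;\equiv\;\sum_{\pi\in\mathbf{S}_r}\,e_{\alpha_1,t_{\pi(1)}}\cdots e_{\alpha_r,t_{\pi(r)}}\pmod{\mathbf{Irr}}.
\]

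I would prove this one-column congruence by induction on $r$. Using the relation $[e_{\alpha_r,\beta},e_{\beta,t_k}]=e_{\alpha_r,t_k}$, I push the rightmost $e_{\alpha_r,\beta}$ through the block $e_{\beta,t_1}\cdots e_{\beta,t_r}$; the $r$ resulting non-discarded residues each take the form $\prod_{i=1}^{r-1}e_{\alpha_i,\beta}\cdot\bigl(\prod_{i\ne k}e_{\beta,t_i}\bigr)\cdot e_{\alpha_r,t_k}$, to which the inductive hypothesis applies on the leftmost $2(r-1)$ factors. Right multiplication by the trailing $e_{\alpha_r,t_k}$ preserves the particular $\beta$-irregularity driving the congruence because $e_{\alpha_r,t_k}$ neither creates nor annihilates $\beta$, so the counts controlling the definition of an irregular expression are unaffected. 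The leftover term in which $e_{\alpha_r,\beta}$ survives to the right of the whole block is a word whose rightmost letter annihilates a virtual $\beta$ never previously created, hence lies in $\mathbf{Irr}$. Reassembling the $k$-choices with the induction output produces exactly the unsigned $\mathbf{S}_r$-sum.

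The main obstacle is the sign accounting: each pair of odd operators $e_{\alpha_i,\bullet}$ anticommutes, and the inductive step must be performed so that the signs from all these anticommutations cancel across the $r!$ residues. Showing that the final output is the \emph{signless} sum over $\mathbf{S}_r$ is the superalgebraic shadow of the fact that an odd virtual auxiliary $\beta$, when used to mediate between the $\alpha$'s and the proper symbols, produces an ordinary symmetrization over each column---exactly the mechanism behind the commutative identity $(U|\fbox{$V$})=\sum_{\overline{V}}(U|\overline{V})$ in $\mathbb{C}[M_{n,d}]$ that motivates the statement.
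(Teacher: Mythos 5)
Your argument is correct and is essentially the route the paper intends: the paper's entire proof is the assertion that the identity ``easily follows from the definitions, by applying the commutator identities in the superalgebra $\mathbf{U}(gl(m_0|m_1+n))$'', and your column-major regrouping (with the matching sign $\epsilon(\lambda)$ on both sides) together with the one-column induction is a faithful and accurate working-out of exactly that. The only step worth making explicit is that substituting the one-column congruence into the middle of the multi-column product again requires multiplying on the \emph{right} by the remaining columns' factors, but since those factors involve only the $\alpha_i$ and the $\beta_{j'}$ with $j'\neq j$, and hence never create the witness symbol $\beta_j$, the same preservation-of-irregularity observation you already invoke inside the induction closes this point.
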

The proof easily follows from the definitions, by applying the commutator identities
in the superalgebra ${\mathbf{U}}(gl(m_0|m_1+n))$.

\begin{example}(cfr. Example \ref{ex symm})
\begin{align*}
\left[
\begin{array}{cc}
 x_1 & x_3 \\  x_2 & x_4
\end{array}
\right| \left.
\fbox{$
\begin{array}{cc}
x_1 & x_2 \\ x_1 & x_3
\end{array}
$} \
\right]
& =
\left[
\begin{array}{cc}
 x_1 & x_3 \\  x_2 & x_4
\end{array}
\right| \left.
\begin{array}{cc}
x_1 & x_2 \\ x_1 & x_3
\end{array}
\right]
+
\left[
\begin{array}{cc}
 x_1 & x_3 \\  x_2 & x_4
\end{array}
\right| \left.
\begin{array}{cc}
x_1 & x_2 \\ x_1 & x_3
\end{array}
\right]
\\
&  +
\left[
\begin{array}{cc}
 x_1 & x_3 \\  x_2 & x_4
\end{array}
\right| \left.
\begin{array}{cc}
x_1 & x_3 \\ x_1 & x_2
\end{array}
\right]
+
\left[
\begin{array}{cc}
 x_1 & x_3 \\  x_2 & x_4
\end{array}
\right| \left.
\begin{array}{cc}
x_1 & x_3 \\ x_1 & x_2
\end{array}
\right]
\\
& =
2  \left[
\begin{array}{cc}
 x_1 & x_3 \\  x_2 & x_4
\end{array}
\right| \left.
\begin{array}{cc}
x_1 & x_2 \\ x_1 & x_3
\end{array}
\right]
+
2  \left[
\begin{array}{cc}
 x_1 & x_3 \\  x_2 & x_4
\end{array}
\right| \left.
\begin{array}{cc}
x_1 & x_3 \\ x_1 & x_2
\end{array}
\right].
\end{align*}
\end{example}\qed

\section{The {\textit{bitableaux correspondence}} isomorphism $\mathcal{K}^{-1}$ and the Koszul map $\mathcal{K}$} \label{sect Kosz}

\subsection{The BCK theorem}

Our next aim is to describe an extremely relevant pair of (mutually inverse) vector space isomorphisms
between the polynomial algebra of  forms ${\mathbb C}[M_{n,n}]$
and the universal enveloping algebra  $\mathbf{U}(gl(n))$.

In order to do this, it is worth to simplify the notation in the following way:
\begin{itemize}
\item
we will write $i$ in place of $x_i$ and $e_{ij}$ in place of $e_{x_i, x_j}$;

\item
consistently, we set $L = P = \underline{n} = \{1, 2, \ldots, n \}.$
\end{itemize}

The main advantage of this convention is that it allows us to write bitableaux in
${\mathbb C}[M_{n,n}]$ and Capelli bitableaux in $\mathbf{U}(gl(n))$ as elements associated
to pairs of Young tableaux on the {\textit{same}} alphabet.

More specifically,
given a shape (partition) $\lambda$ with $\lambda_1 \leq n$, to any
pair of Young tableaux $S, T$ on the alphabet $\underline{n} = \{1, 2, \ldots, n \}$ and
of the same shape $sh(S) = sh(T) = \lambda$, one associates the (determinantal) bitableau $(S|T) \in {\mathbb C}[M_{n,n}]$,
and the  Capelli bitableau $[S|T] \in \mathbf{U}(gl(n))$.

\begin{theorem} {\bf{(The BCK theorem)}}  \label{KBT corr}
The ``bitableaux correspondence'' map
\begin{equation}\label{operator B}
\mathcal{K}^{-1} : (S|T) \mapsto [S|T]
\end{equation}
uniquely defines a linear isomorphism
$$
\mathcal{K}^{-1} : {\mathbb C}[M_{n,n}] \cong \mathbf{Sym}(gl(n)) \rightarrow \mathbf{U}(gl(n)).
$$
Furthermore, this isomorphism is the inverse of the Koszul map
$$
\mathcal{K} : \mathbf{U}(gl(n)) \rightarrow {\mathbb C}[M_{n,n}] \cong \mathbf{Sym}(gl(n))
$$
introduced by J.-L. Koszul in \emph{\cite{Koszul-BR}}.
\end{theorem}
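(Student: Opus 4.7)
The plan is to establish the stronger pointwise identity
\begin{equation*}
\mathcal{K}([S|T]) = (S|T)
\end{equation*}
for every pair of Young tableaux $S,T$ of the same shape $\lambda$ on $\underline{n}$. Because $\mathcal{K}$ is already known to be a linear isomorphism \cite{Koszul-BR} (in essence a PBW symmetrization), this identity has several automatic consequences: the assignment $(S|T) \mapsto [S|T]$ is well-defined on $\mathbb{C}[M_{n,n}]$, since any relation $\sum_i c_i (S_i|T_i) = 0$ forces $\mathcal{K}\bigl(\sum_i c_i [S_i|T_i]\bigr) = 0$ and hence $\sum_i c_i [S_i|T_i] = 0$ by injectivity of $\mathcal{K}$; the standard bitableaux being a basis of $\mathbb{C}[M_{n,n}]$ by Theorem \ref{theorem: standard basis} forces the standard Capelli bitableaux to be a basis of $\mathbf{U}(gl(n))$; and the resulting bijection is visibly the inverse of $\mathcal{K}$. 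So the whole theorem reduces to proving the pointwise identity.

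To prove $\mathcal{K}([S|T]) = (S|T)$ for arbitrary shape, I would first reduce to column shape. On the polynomial side, the signed column expansion of subsection \ref{Bit Exp} exhibits $(S|T)$ as an explicit signed sum of monomials (column bitableaux) obtained by independently permuting the entries within each row. On the enveloping-algebra side, the analogous column expansion of $[S|T]$ into column Capelli bitableaux is supplied by Proposition \ref{column Cap dev}, and the key observation is that the combinatorial coefficients agree term by term — this comes from unwinding the virtual presentation (\ref{determinantal}) and exploiting the fact that the rows of the Coderuyts tableau $C^*_\lambda$ consist of repeated even virtual symbols $\alpha_i$, whose superpolarizations commute. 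By linearity of $\mathcal{K}$, the desired identity reduces to column shape $(1^h)$.

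For the column case one computes $\mathfrak{p}\bigl(e_{i_1,\alpha_1}\cdots e_{i_h,\alpha_h}\cdot e_{\alpha_1,j_1}\cdots e_{\alpha_h,j_h}\bigr)$ by successively applying the superbracket rule of Section \ref{sec 3} to move each $e_{\alpha_k,j_k}$ to the right past the block $e_{i_1,\alpha_1}\cdots e_{i_h,\alpha_h}$. Each nonzero supercommutator produces either a proper element $e_{i_s,j_k}$ or a term that lies in the left ideal $\mathbf{Irr}$ and therefore dies under $\mathfrak{p}$. A careful accounting of signs — driven purely by the odd parity of every factor in the super setting — shows that the surviving contributions reassemble into the Koszul preimage (under the PBW symmetrization) of $(-1)^{\binom{h}{2}} x_{i_1j_1}\cdots x_{i_hj_h}$, which by Eq. (\ref{column sign}) is precisely the column bitableau $(i_1\cdots i_h\,|\,j_1\cdots j_h)$; applying $\mathcal{K}$ then returns this column bitableau.

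The main obstacle is this last step: carrying out the devirtualization explicitly and tracking the signs that identify $\mathfrak{p}$ of the monomial virtual presentation with the PBW symmetrization of the corresponding commutative column monomial. Once this column identity is secured, the extension to arbitrary shape is routine combinatorics, and the well-definedness and isomorphism statements in the theorem then follow formally from the bijectivity of $\mathcal{K}$.
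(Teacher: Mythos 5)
Your overall architecture --- prove the single pointwise identity $\mathcal{K}([S|T]) = (S|T)$ and let the known bijectivity of $\mathcal{K}$ deliver well-definedness, linearity, bijectivity and the inverse relation all at once --- is a legitimate logical skeleton, and it is genuinely different from what the paper does (the paper derives well-definedness from the fact that bitableaux and Capelli bitableaux obey the \emph{same straightening laws}, gets bijectivity from the two standard-basis theorems, and cites \cite{Brini4-BR} for the inverse relation with $\mathcal{K}$). However, the proposal has a fatal flaw: you identify the Koszul map with the PBW symmetrization, and under that reading the identity you set out to prove is simply false. Already at depth $h=2$, Proposition \ref{devirtualization column} gives
\begin{equation*}
\left[
\begin{array}{c}
i_1\\  i_2
\end{array}
\right| \left.
\begin{array}{c}
j_1 \\ j_2
\end{array}
\right]
= -\,e_{i_1 j_1}e_{i_2 j_2} + \delta_{i_2 j_1}\,e_{i_1 j_2},
\end{equation*}
while the symmetrization of the corresponding column bitableau $(-1)^{\binom{2}{2}}x_{i_1j_1}x_{i_2j_2}$ is $-\tfrac{1}{2}\big(e_{i_1j_1}e_{i_2j_2}+e_{i_2j_2}e_{i_1j_1}\big) = -e_{i_1j_1}e_{i_2j_2}+\tfrac{1}{2}\delta_{i_2j_1}e_{i_1j_2}-\tfrac{1}{2}\delta_{i_1j_2}e_{i_2j_1}$; these disagree whenever $i_2=j_1$ or $i_1=j_2$. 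So the symmetrizer does \emph{not} invert the bitableaux correspondence. The Koszul map is a different filtration-preserving isomorphism (it agrees with symmetrization only on the associated graded), and the entire content of the ``Furthermore'' clause is that the correspondence inverts \emph{that specific map}. Without Koszul's actual definition in hand, the sign-tracking computation you describe either cannot be set up, or, if carried out against the symmetrizer, refutes rather than proves the claim.

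Second, even granting the correct $\mathcal{K}$, the column-shape identity $\mathcal{K}\big([i_1\cdots i_h\,|\,j_1\cdots j_h]_{\mathrm{col}}\big) = (i_1\cdots i_h\,|\,j_1\cdots j_h)_{\mathrm{col}}$ is exactly the step you defer (``the main obstacle is this last step''). The reduction from general shape to columns and the formal deductions from the pointwise identity are indeed routine (and your argument for them is correct), so what remains unproved is the entire substance of the theorem. As it stands the proposal is a reasonable \emph{reduction} of the statement to its essential content, not a proof; to complete it you would need (i) the precise definition of Koszul's map and an independent proof that it is bijective, and (ii) the explicit devirtualization-versus-$\mathcal{K}$ computation in the column case, neither of which is supplied.
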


Eq. (\ref{operator B}) indeed defines a {\emph{linear}} operator since
bitableaux in ${\mathbb C}[M_{n,n}]$ and Capelli bitableaux
in $\mathbf{U}(gl(n))$ are ruled by the \textbf{same} \emph{straightening laws} (see \cite{Brini3-BR}, Proposition $7$).

The linear isomorphism $\mathcal{K}^{-1}$ was introduced in
\cite{Brini4-BR}, Theorem $1$.  The fact that $\mathcal{K}^{-1}$ and
$\mathcal{K}$ are inverse of each other was proved in
\cite{Brini4-BR}, Theorem $2$ (see also, \cite{BriUMI-BR}).

\subsection{Right symmetrized  bitableaux and Young-Capelli bitableaux}\label{sect YC Bit}

The "bitableaux correspondence" and the Koszul isomorphisms behave well with
respect to right symmetrized  bitableaux
$$
(S|\fbox{$T$}) \in {\mathbb C}[M_{n,n}]
$$
and {\textit{Young-Capelli bitableaux}}
$$
[S| \fbox{$T$} ]
=
\mathfrak{p} \left( e_{S C^*_{\lambda}} e_{C^*_{\lambda} D^*_{\lambda}} e_{D^*_{\lambda} T}c \right)
 \in \mathbf{U}(gl(n)).
$$

In plain words, any Young-Capelli bitableaux $[S| \fbox{$T$} ]$ is the image -
with respect to the linear operator $\mathcal{K}^{-1}$ - of the right symmetrized bitableaux $(S|\fbox{$T$})$.

\begin{theorem}\label{image symm} We have:
\begin{align*}
&\mathcal{K}^{-1} : (S|\fbox{$T$}) \mapsto [S| \fbox{$T$} ],
\\
&\mathcal{K} : [S| \fbox{$T$} ] \mapsto (S|\fbox{$T$}).
\end{align*}
\end{theorem}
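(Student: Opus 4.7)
The plan is to reduce Theorem \ref{image symm} immediately to the BCK theorem (Theorem \ref{KBT corr}) together with Proposition \ref{YC bit and C bit}, using only the linearity of $\mathcal{K}^{-1}$ and $\mathcal{K}$. Both claims are really a single statement, since the two maps are already known from Theorem \ref{KBT corr} to be inverse vector space isomorphisms; so it suffices to prove the first one, namely $\mathcal{K}^{-1}\bigl((S|\fbox{$T$})\bigr) = [S|\fbox{$T$}\,]$.

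First, I would recall the two parallel ``column expansion'' formulas. In the polynomial algebra ${\mathbb C}[M_{n,n}]$, the definition in Section \ref{symm bit} gives
$$
(S|\fbox{$T$}) \;=\; \sum_{\overline{T}} \, (S|\overline{T}),
$$
the sum ranging over all tableaux $\overline{T}$ obtained from $T$ by permuting entries within each column (with multiplicities when columns of $T$ have repeated entries). On the $\mathbf{U}(gl(n))$ side, Proposition \ref{YC bit and C bit} gives the formally identical expansion
$$
[S|\fbox{$T$}\,] \;=\; \sum_{\overline{T}} \, [S|\overline{T}\,],
$$
over exactly the same index set of column-permuted tableaux.

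Next, I would apply $\mathcal{K}^{-1}$ to the first expansion. By the BCK theorem, $\mathcal{K}^{-1}$ is a linear map sending each determinantal bitableau $(S|\overline{T})$ to the Capelli bitableau $[S|\overline{T}\,]$. Hence
$$
\mathcal{K}^{-1}\bigl((S|\fbox{$T$})\bigr) \;=\; \sum_{\overline{T}} \mathcal{K}^{-1}\bigl((S|\overline{T})\bigr) \;=\; \sum_{\overline{T}} [S|\overline{T}\,] \;=\; [S|\fbox{$T$}\,],
$$
where the last equality is Proposition \ref{YC bit and C bit}. This proves the first displayed formula. Applying the inverse $\mathcal{K}$ to both sides, again using Theorem \ref{KBT corr}, yields $\mathcal{K}\bigl([S|\fbox{$T$}\,]\bigr) = (S|\fbox{$T$})$, which is the second assertion.

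There is essentially no obstacle in the proof itself once one accepts the two inputs: the real content sits in the BCK theorem (established earlier) and in Proposition \ref{YC bit and C bit} (which follows from the definitions of the Young symmetrizer and the super-commutator manipulations inside ${\mathbf{U}}(gl(m_0|m_1+n))$). The only point to verify carefully is that the sums over $\overline{T}$ on the two sides are indexed by the same set with the same multiplicities, which is immediate from the fact that both expansions are defined combinatorially from the column structure of $T$ alone and are entirely independent of whether one is working in ${\mathbb C}[M_{n,n}]$ or in $\mathbf{U}(gl(n))$.
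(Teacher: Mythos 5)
Your proposal is correct and follows essentially the same route as the paper: expand $(S|\fbox{$T$})$ into the column-permuted bitableaux $(S|\overline{T})$, push the sum through the linear map $\mathcal{K}^{-1}$ term by term via the BCK theorem, and invoke Proposition \ref{YC bit and C bit} to recognize $\sum_{\overline{T}}[S|\overline{T}\,]$ as $[S|\fbox{$T$}\,]$. The second formula then follows by inverting, exactly as you note.
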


\begin{proof}
Indeed, we have:
\begin{align*}
\mathcal{K}^{-1} \Big( (S|\fbox{$T$}) \Big) =& \mathcal{K}^{-1} \Big( \sum_{\overline{T}} \ (S|\overline{T} ) \Big)
\\
=& \sum_{\overline{T}} \ [S|\overline{T} ],
\end{align*}
where the sum is extended over {\textit{all}} $\overline{T}$ column permuted of $T$.

By Proposition \ref{YC bit and C bit}, the last summation equals the Young-Capelli
bitableaux $[S| \fbox{$T$} ]$.
\end{proof}

By Theorem \ref{image symm} and Theorem  \ref{GC basis}, we have:

\begin{theorem}\label{Y-C basis}
Let $h \in {\mathbb N}.$ The set of Young-Capelli bitableaux
$$
 \bigcup_{k = 0}^h  \ \Big\{ \ [S|\fbox{$T$}]; \ S, T \ standard, \ sh(S) = sh(T) = \lambda \vdash k, \ \lambda_1 \leq n \Big\}
$$
is a basis of the filtration element $\mathbf{U}(gl(n))^{(h)}$.
\end{theorem}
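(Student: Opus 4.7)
The plan is to deduce this directly from the BCK isomorphism together with the Gordan–Capelli basis theorem, by transporting the basis across $\mathcal{K}^{-1}$.

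First I would observe that by Theorem \ref{image symm}, for each pair of standard Young tableaux $S,T$ of shape $\lambda \vdash k$ with $\lambda_1 \leq n$, the right symmetrized bitableau $(S|\fbox{$T$}) \in {\mathbb C}_k[M_{n,n}]$ is mapped by the bitableaux correspondence isomorphism $\mathcal{K}^{-1}$ to the Young-Capelli bitableau $[S|\fbox{$T$}] \in \mathbf{U}(gl(n))$. By Theorem \ref{GC basis} (with $d = n$), the collection
$$
\bigcup_{k=0}^h \ \Big\{ (S|\fbox{$T$}); \ S, T \ \text{standard}, \ sh(S) = sh(T) = \lambda \vdash k, \ \lambda_1 \leq n \Big\}
$$
is a basis of $\bigoplus_{k=0}^h {\mathbb C}_k[M_{n,n}]$, the subspace of polynomials of total degree $\leq h$.

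Next I would invoke the BCK Theorem \ref{KBT corr}: since $\mathcal{K}^{-1}$ is a linear isomorphism from ${\mathbb C}[M_{n,n}] \cong \mathbf{Sym}(gl(n))$ onto $\mathbf{U}(gl(n))$, the image of any basis is a basis. Applying $\mathcal{K}^{-1}$ to the Gordan–Capelli basis above therefore yields a linearly independent family of Young-Capelli bitableaux whose cardinality equals $\dim \mathbf{U}(gl(n))^{(h)}$.

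The remaining (and main) point is to identify the image $\mathcal{K}^{-1}\bigl( \bigoplus_{k=0}^h {\mathbb C}_k[M_{n,n}] \bigr)$ with the PBW filtration element $\mathbf{U}(gl(n))^{(h)}$. This is where one must use the fact that $\mathcal{K}$ is the Koszul map of \cite{Koszul-BR}: it is compatible with the PBW filtration on $\mathbf{U}(gl(n))$ and the polynomial grading on $\mathbf{Sym}(gl(n))$, in the sense that $\mathcal{K}$ sends $\mathbf{U}(gl(n))^{(k)}$ into $\bigoplus_{j \leq k} {\mathbb C}_j[M_{n,n}]$ and induces the classical PBW identification on associated graded objects. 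Concretely, a Capelli bitableau $[S|T]$ of shape $\lambda \vdash k$ expands as a monomial product of $k$ factors $e_{ij}$ plus a correction of strictly lower filtration degree, and a right symmetrized bitableau of shape $\lambda \vdash k$ is a polynomial of pure degree $k$; so $\mathcal{K}^{-1}$ carries ${\mathbb C}_k[M_{n,n}]$ into $\mathbf{U}(gl(n))^{(k)}$ but not into $\mathbf{U}(gl(n))^{(k-1)}$.

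Putting these pieces together, the image under $\mathcal{K}^{-1}$ of $\bigoplus_{k=0}^h {\mathbb C}_k[M_{n,n}]$ is exactly $\mathbf{U}(gl(n))^{(h)}$, and the images of the standard right symmetrized bitableaux form the asserted basis. The single delicate point — and the step I expect to be the real obstacle to present carefully — is the filtration-compatibility of the Koszul map; everything else is a direct transport of Theorem \ref{GC basis} through the isomorphism $\mathcal{K}^{-1}$ provided by Theorems \ref{KBT corr} and \ref{image symm}.
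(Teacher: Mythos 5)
Your proposal is correct and follows essentially the same route as the paper: transport the Gordan--Capelli basis of Theorem \ref{GC basis} through the isomorphism $\mathcal{K}^{-1}$, using Theorem \ref{image symm} to identify the images as Young-Capelli bitableaux. The filtration-compatibility of the Koszul map that you flag as the one delicate step is exactly what the paper records (without further argument) as Corollary \ref{KBT filtr}, so your write-up is, if anything, more explicit than the paper's two-line deduction.
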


\begin{remark}\label{action}
The basis elements
$$
\Big\{ \ [S|\fbox{$T$}]; \ S, T \ standard, \ sh(S) = sh(T) = \lambda \vdash k, \ \lambda_1 \leq n \Big\}
$$
act in a quite remarkable way on Gordan-Capelli basis elements
$$
\Big\{ \ (U|\fbox{$V$}); \ U, V\ standard, \ sh(U) = sh(V) = \mu \vdash h, \ \mu_1 \leq n \Big\}.
$$
Indeed, we have:

\begin{itemize}
\item [--]
If $h < k$, the action is zero.

\item [--]
If $h = k$ and $\lambda \neq \mu$, the action is zero.

\item [--]
If $h = k$ and $\lambda = \mu$, the action is nondegenerate triangular
(with respect to a suitable linear order on standard tableaux of the same shape).
\end{itemize}
See \emph{\cite{Brini2-BR}} and  \emph{\cite{Bri-BR}}, \emph{Theorem} $10.1$. \qed
\end{remark}

\subsection{Column Capelli bitableaux in $\mathbf{U}(gl(n))$}\label{column Capelli sec}

A column Capelli bitableau in $\mathbf{U}(gl(n))$ is a Capelli bitableau $[S|T]$, where $S$ and $T$ are column Young
tableaux of the same depth.

Although column Capelli bitableaux are far from being ``monomials'' in $\mathbf{U}(gl(n))$, they play the same role that
column  bitableaux -- signed monomials -- play in the polynomial algebra ${\mathbb C}[M_{n,n}]$. Specifically,
Capelli bitableaux and Young-Capelli bitableaux expand into column Capelli bitableaux just in the same way as
bitableaux and right symmetrized bitableaux expand into column  bitableaux in the polynomial algebra ${\mathbb C}[M_{n,n}]$.

\begin{remark}
The column Capelli bitableau $[i|j]$ of depth $h = 1$  equals the generator
$e_{i,j}$ of the algebra $\mathbf{U}(gl(n))$,  $i, j = 1,2, \ldots, n$, $i, j = 1,2, \ldots, n$.
Indeed
$$
[i|j] = \mathfrak{p} \left[ e_{i, \alpha}  e_{\alpha, j} \right] =
\mathfrak{p} \left[  - e_{\alpha, j} e_{i, \alpha} +  e_{i, j} + \delta_{i, j} e_{\alpha, \alpha}  \right]
= e_{i j}.
$$
\end{remark}

Since column  bitableaux in the polynomial algebra ${\mathbb C}[M_{n,n}]$ are signed \emph{commutative} monomials,
then column Capelli bitableaux are invariant with respect to permutations of their rows,
that is
$$
\left[
\begin{array}{c}
i_1\\  i_2 \\ \vdots \\ i_h
\end{array}
\right| \left.
\begin{array}{c}
j_1\\  j_2 \\ \vdots \\ j_h
\end{array}
\right]
=
\left[
\begin{array}{c}
i_{\sigma(1)}\\  i_{\sigma(2)} \\ \vdots \\ i_{\sigma(h)}
\end{array}
\right| \left.
\begin{array}{c}
j_{\sigma(1)}\\  j_{\sigma(2)} \\ \vdots \\ j_{\sigma(h)}
\end{array}
\right]
$$
for every $\sigma \in \mathbf{S}_h.$

Let us denote by ${\mathbb C}_h[M_{n,n}]$ the homogeneous component of degree $h \in \mathbb{N}$
of the polynomial algebra ${\mathbb C}[M_{n,n}]$ and denote $\mathbf{U}(gl(n))^{(h)}$ the $h-$th
filtration element of the enveloping algebra $\mathbf{U}(gl(n)).$

\begin{corollary}\label{KBT filtr}
The bitableaux correspondence isomorphism $\mathcal{K}^{-1}$ and the Koszul isomorphisms  $\mathcal{K}$
induce, by restriction, a pair of mutually inverse isomorphisms
$$
\mathcal{K}^{-1} : \bigoplus_{k = 0}^h \ {\mathbb C}_k[M_{n,n}] \ \rightarrow \ \mathbf{U}(gl(n))^{(h)}
$$
and
$$
\mathcal{K} : \mathbf{U}(gl(n))^{(h)}\ \rightarrow \ \bigoplus_{k = 0}^h \ {\mathbb C}_k[M_{n,n}] .
$$
\end{corollary}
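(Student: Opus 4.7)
The plan is to derive this corollary directly from the BCK Theorem by checking that $\mathcal{K}^{-1}$ and $\mathcal{K}$ respect the two filtrations in question. By the Standard Basis Theorem (Theorem \ref{theorem: standard basis}), the subspace $\bigoplus_{k=0}^h {\mathbb C}_k[M_{n,n}]$ has as a basis the set of standard bitableaux $(S|T)$ with $sh(S)=sh(T)=\lambda \vdash k$, $\lambda_1 \leq n$, $k \leq h$. Under the bitableaux correspondence $\mathcal{K}^{-1}$ these map to the standard Capelli bitableaux $[S|T]$ of the same shapes. So the whole statement reduces to verifying two things: (i) each such $[S|T]$ of shape $\vdash k$ actually lies in $\mathbf{U}(gl(n))^{(h)}$ (indeed in $\mathbf{U}(gl(n))^{(k)}$), and (ii) the resulting collection of standard Capelli bitableaux of shapes $\vdash k$, $k \leq h$, is a basis of $\mathbf{U}(gl(n))^{(h)}$.

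For step (i), I would invoke the column expansion of bitableaux recalled in subsection \ref{Bit Exp}, transported via $\mathcal{K}^{-1}$ to $\mathbf{U}(gl(n))$: a Capelli bitableau $[S|T]$ of shape $\lambda \vdash k$ expands, up to a global sign, as a signed sum of column Capelli bitableaux of depth $k$. By Proposition \ref{column Cap dev} (column Capelli development), a column Capelli bitableau of depth $k$ is a polynomial in the generators $e_{ij}$ of PBW-degree at most $k$, with leading term of column-determinantal type $\pm e_{i_1 j_1}\cdots e_{i_k j_k}$. Summing, $[S|T] \in \mathbf{U}(gl(n))^{(k)} \subseteq \mathbf{U}(gl(n))^{(h)}$.

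For step (ii), the restriction of $\mathcal{K}^{-1}$ to $\bigoplus_{k=0}^h {\mathbb C}_k[M_{n,n}]$ is automatically injective, and step (i) places its image inside $\mathbf{U}(gl(n))^{(h)}$. Surjectivity then follows by a dimension count: the PBW theorem gives
\[
\dim \mathbf{U}(gl(n))^{(h)} \;=\; \sum_{k=0}^h \binom{n^2+k-1}{k} \;=\; \sum_{k=0}^h \dim {\mathbb C}_k[M_{n,n}],
\]
so an injective map between finite-dimensional spaces of equal dimension is a bijection. Equivalently, the standard Capelli bitableaux of shape $\vdash k$ with $k \leq h$ are linearly independent (being part of the global basis of $\mathbf{U}(gl(n))$ supplied by the BCK Theorem) and their cardinality matches $\dim \mathbf{U}(gl(n))^{(h)}$, so they must span $\mathbf{U}(gl(n))^{(h)}$. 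Applying the same argument to $\mathcal{K}$ (or simply inverting) yields the companion statement, and the two maps are mutually inverse because they are mutually inverse on all of $\mathbf{U}(gl(n))$.

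The only substantive point is step (i), the PBW-degree estimate for Capelli bitableaux. The remainder is formal once that bound is in hand. This estimate rests on the virtual-variable computation of column Capelli bitableaux in Proposition \ref{column Cap dev}; since the balanced-monomial virtual presentation of $[S|T]$ of shape $\vdash k$ uses exactly $k$ annihilations and $k$ creations of proper symbols, the PBW-degree is controlled by $k$ after devirtualization. With that in place, the corollary reduces to basis-preservation plus a dimension count, as above.
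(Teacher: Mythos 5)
Your argument is correct. The paper in fact states this corollary without any proof (it is followed only by the remark that it sharpens the PBW theorem), so there is no official argument to compare against; your reconstruction --- transporting the standard-basis decomposition of Theorem \ref{theorem: standard basis} through $\mathcal{K}^{-1}$, bounding the filtration degree of a Capelli bitableau of shape $\lambda\vdash k$ by $k$ via its column expansion and Proposition \ref{devirtualization column}, and closing with the dimension count $\dim\mathbf{U}(gl(n))^{(h)}=\sum_{k=0}^{h}\binom{n^2+k-1}{k}$ --- supplies exactly the details the authors leave implicit. You rightly isolate the degree bound as the one substantive step; it can be obtained even more directly from the definition of $[S|T]$ as $\mathfrak{p}\bigl(e_{S,C^*_{\lambda}}e_{C^*_{\lambda},T}\bigr)$, since devirtualizing a balanced monomial with $k$ proper annihilations and $k$ proper creations only ever produces products of at most $k$ generators $e_{ij}$, but your route through the column Capelli development is equally valid and consistent with how the paper organizes this material.
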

The preceding assertion can be regarded as a sharpened version of the PBW Theorem for $\mathbf{U}(gl(n)).$

\subsubsection{Devirtualization of column Capelli bitableaux in $\mathbf{U}(gl(n))$}\label{column Cap dev}

Given any column Capelli bitableau,  \emph{devirtualized
expressions} of it as an element of $\mathbf{U}(gl(n))$ can be easily obtained by means of iterations of
the following identities.

\begin{proposition} \label{devirtualization column}
In the enveloping algebra $\mathbf{U}(gl(n))$,
we have:

$$
\left[
\begin{array}{c}
i_1\\  i_2 \\ \vdots \\ i_{h-1} \\ i_h
\end{array}
\right| \left.
\begin{array}{c}
j_1 \\ j_2 \\ \vdots\\ j_{h-1} \\ j_h
\end{array}
\right] =
$$

$$
(-1)^{h-1} \ e_{i_1, j_1} \ \left[
\begin{array}{c}
i_2\\  \vdots \\ i_{h-1} \\ i_h
\end{array}
\right| \left.
\begin{array}{c}
j_2 \\ \vdots \\  j_{h-1} \\ j_h
\end{array}
\right]  \ + \ (-1)^{h-2} \ \sum_{k=2}^h \delta_{i_k,j_1} \ \left[
\begin{array}{c}
 i_1 \\  \vdots  \\ i_{k-1}  \\  i_{k+1} \\ \vdots   \\ i_h
\end{array}
\right| \left.
\begin{array}{c}
j_k \\ \vdots  \\ j_{k-1}  \\ j_{k+1} \\ \vdots \\ j_h
\end{array}
\right] =
$$

$$
(-1)^{h-1} \ \left[
\begin{array}{c}
i_1\\  i_2 \\ \vdots \\ i_{h-1}
\end{array}
\right| \left.
\begin{array}{c}
j_1 \\ j_2 \\ \vdots\\  j_{h-1}
\end{array}
\right] \ e_{i_h, j_h} \ + \ (-1)^{h-2} \ \sum_{k=1}^{h-1}
\delta_{i_h,j_k} \ \left[
\begin{array}{c}
i_1\\  \vdots  \\ i_{k-1}  \\  i_{k+1} \\ \vdots  \\ i_k
\end{array}
\right| \left.
\begin{array}{c}
j_1 \\ \vdots  \\ j_{k-1}  \\ j_{k+1} \\  \vdots  \\ j_h
\end{array}
\right].
$$

\end{proposition}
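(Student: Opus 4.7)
The plan is to work directly with a monomial virtual presentation
$$[i_1,\ldots,i_h \mid j_1,\ldots,j_h] = \mathfrak{p}\bigl(e_{i_1,\alpha_1}\cdots e_{i_h,\alpha_h}\cdot e_{\alpha_1,j_1}\cdots e_{\alpha_h,j_h}\bigr),$$
for pairwise distinct positive virtual symbols $\alpha_1,\ldots,\alpha_h\in A_0$, and to manipulate this product using the $gl(m_0|m_1+n)$ superbracket, projecting via $\mathfrak{p}$ while discarding irregular summands at each stage. To prove the first identity I will isolate the leftmost factor $e_{i_1,\alpha_1}$ and commute $e_{\alpha_1,j_1}$ leftward through the odd factors $e_{i_2,\alpha_2},\ldots,e_{i_h,\alpha_h}$. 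The relevant super-commutator is
$$e_{i_s,\alpha_s}\,e_{\alpha_1,j_1} = -e_{\alpha_1,j_1}\,e_{i_s,\alpha_s}+\delta_{j_1,i_s}\,e_{\alpha_1,\alpha_s}, \qquad s=2,\ldots,h,$$
with the other delta killed because $\alpha_1\neq\alpha_s$. Iterating yields a principal summand with sign $(-1)^{h-1}$ and, for each $k=2,\ldots,h$, a correction summand with coefficient $(-1)^{h-k}\delta_{j_1,i_k}$, inserted between $e_{i_1,\alpha_1}$ and the right block $B=\prod_{s=2}^h e_{\alpha_s,j_s}$.

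For the principal summand I will apply
$$e_{i_1,\alpha_1}\,e_{\alpha_1,j_1} = e_{i_1,j_1}-e_{\alpha_1,j_1}\,e_{i_1,\alpha_1}+\delta_{i_1,j_1}\,e_{\alpha_1,\alpha_1},$$
and observe that the last two summands put an annihilation of $\alpha_1$ in front of the block $AB$ (where $A=\prod_{s=2}^h e_{i_s,\alpha_s}$) which never creates $\alpha_1$, hence lie in $\mathbf{Irr}$ and vanish under $\mathfrak{p}$. Since $e_{i_1,j_1}\in\mathbf{U}(gl(0|n))$ and $\mathbf{Irr}$ is a left ideal of $\mathbf{U}(gl(m_0|m_1+n))$, one obtains $\mathfrak{p}(e_{i_1,j_1}\cdot AB)=e_{i_1,j_1}\cdot[i_2,\ldots,i_h\mid j_2,\ldots,j_h]$, producing the first term of the claimed formula.

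For the $k$-th correction summand, the even factor $e_{\alpha_1,\alpha_k}$ super-commutes with every remaining $e_{i_s,\alpha_s}$ (both deltas vanish), so it slides freely to yield $e_{i_1,\alpha_1}\,e_{\alpha_1,\alpha_k}\cdot C_k\cdot B$, where $C_k$ is $A$ with its $k$-th factor deleted. The identity $e_{i_1,\alpha_1}\,e_{\alpha_1,\alpha_k}=e_{\alpha_1,\alpha_k}\,e_{i_1,\alpha_1}+e_{i_1,\alpha_k}$ again leaves only $e_{i_1,\alpha_k}\,C_k\,B$ surviving under $\mathfrak{p}$. I then move $e_{\alpha_k,j_k}$ inside $B$ past $e_{\alpha_{k-1},j_{k-1}},\ldots,e_{\alpha_2,j_2}$; each of these $k-2$ transpositions is purely odd--odd with vanishing delta corrections, giving a sign $(-1)^{k-2}$. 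After relabeling the virtual symbols (which the definition permits, by the invariance of $[\,\cdot\,|\,\cdot\,]$ under the choice of $C^*_\lambda$), the result is the standard virtual presentation of the depth-$(h-1)$ column Capelli bitableau with left column $(i_1,\ldots,\widehat{i_k},\ldots,i_h)$ and right column $(j_k,j_2,\ldots,\widehat{j_k},\ldots,j_h)$. Combining signs yields $(-1)^{h-k}\cdot(-1)^{k-2}=(-1)^{h-2}$, matching the claim.

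The second identity (peeling off the bottom row) follows by a dual argument: push $e_{\alpha_h,j_h}$ rightward through the right block in the virtual presentation, or equivalently apply the first identity after a cyclic permutation of rows, using that column Capelli bitableaux are invariant under simultaneous row permutations of the two sides. The main subtlety throughout is the disciplined bookkeeping of signs and the systematic recognition of irregular expressions that drop under $\mathfrak{p}$; once these are controlled, the reindexing of the residual virtual symbols canonically re-exhibits every surviving term as a smaller column Capelli bitableau.
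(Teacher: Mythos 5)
Your proposal is correct and follows essentially the same route as the paper: the same monomial virtual presentation, commuting $e_{\alpha_1,j_1}$ leftward through the left block with odd--odd sign bookkeeping, discarding the irregular summands under $\mathfrak{p}$, and re-normalizing each surviving correction term (with total sign $(-1)^{h-k}(-1)^{k-2}=(-1)^{h-2}$) as the standard presentation of a depth-$(h-1)$ column Capelli bitableau. The only cosmetic difference is that you absorb $e_{\alpha_1,\alpha_k}$ leftward into $e_{i_1,\alpha_1}$ while the paper pushes it rightward into $e_{\alpha_{k},j_{k}}$; both yield the same bitableau, and your dual treatment of the second identity matches the paper's ``proved in a similar way.''
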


\begin{proof}

By definition,
$$
\left[
\begin{array}{c}
i_1\\  i_2 \\ \vdots \\ i_{h-1} \\ i_h
\end{array}
\right| \left.
\begin{array}{c}
j_1 \\ j_2 \\ \vdots\\ j_{h-1} \\ j_h
\end{array}
\right] =
$$
\begin{align*}
=& \ \mathfrak{p} \big[ e_{{i_1},\alpha_1} e_{{i_2},\alpha_2}   \cdots e_{{i_{h-1}},\alpha_{h-1}} e_{{i_{h}},\alpha_h} \cdot
e_{\alpha_1, {j_1}} e_{\alpha_2, {j_2}}   \cdots e_{\alpha_{h-1}, {j_{h-1}}} e_{\alpha_h, {j_{h}}} \big] =                                            \\
=& \ \mathfrak{p} \big[
  - e_{{i_1},\alpha_1} e_{{i_2},\alpha_2}   \cdots e_{{i_{h-1}},\alpha_{h-1}} e_{\alpha_1, {j_1}}
 e_{{i_{h}},\alpha_h} \cdot e_{\alpha_2, {j_2}}   \cdots e_{\alpha_{h-1}, {j_{h-1}}} e_{\alpha_h, {j_{h}}}
\\
&\phantom{\ \mathfrak{p} \big[} + e_{{i_1},\alpha_1} e_{{i_2},\alpha_2}   \cdots e_{{i_{h-1}},\alpha_{h-1}}
  \cdot \delta_{i_h, j_1} e_{\alpha_1, \alpha_h} e_{\alpha_2, {j_2}}   \cdots  e_{\alpha_{h-1}, {j_{h-1}}} e_{\alpha_h, {j_{h}}} \big] =
\\
=& \ \mathfrak{p} \big[
  - e_{{i_1},\alpha_1} e_{{i_2},\alpha_2}   \cdots e_{{i_{h-1}},\alpha_{h-1}} e_{\alpha_1, {j_1}}
 e_{{i_{h}},\alpha_h} \cdot e_{\alpha_2, {j_2}}   \cdots e_{\alpha_{h-1}, {j_{h-1}}} e_{\alpha_h, {j_{h}}}
\\
&\phantom{\ \mathfrak{p} \big[} +
e_{{i_1},\alpha_1} e_{{i_2},\alpha_2}   \cdots e_{{i_{h-1}},\alpha_{h-1}}
  \cdot \delta_{i_h, j_1} e_{\alpha_2, {j_2}}   \cdots  e_{\alpha_{h-1}, {j_{h-1}}} e_{\alpha_1, {j_{h}}} \big].
\end{align*}
Notice that
$$
 \delta_{i_h, j_1} \
e_{{i_1},\alpha_1} e_{{i_2},\alpha_2}   \cdots e_{{i_{h-1}},\alpha_{h-1}}
  \cdot  e_{\alpha_2, {j_2}}   \cdots  e_{\alpha_{h-1}, {j_{h-1}}} e_{\alpha_1, {j_{h}}}  =
$$
$$
 \delta_{i_h, j_1} \ (-1)^{h - 2} \
e_{{i_1},\alpha_1} e_{{i_2},\alpha_2}   \cdots e_{{i_{h-1}},\alpha_{h-1}} \cdot
 e_{\alpha_1, {j_{h}}}    e_{\alpha_2, {j_2}}   \cdots  e_{\alpha_{h-1}, {j_{h-1}}}
$$
as elements of the algebra ${\mathbf{U}}(gl(m_0|m_1+n)).$

Therefore, the summand
$$
\mathfrak{p} \big[ e_{{i_1},\alpha_1} e_{{i_2},\alpha_2}   \cdots e_{{i_{h-1}},\alpha_{h-1}}
  \cdot \delta_{i_h, j_1} e_{\alpha_2, {j_2}}   \cdots  e_{\alpha_{h-1}, {j_{h-1}}} e_{\alpha_1, {j_{h}}} \big]
$$
equals
$$(-1)^{h - 2} \
\delta_{i_h, j_1} \
\left[
\begin{array}{c}
i_1\\  i_2 \\ \vdots \\ i_{h-1}
\end{array}
\right| \left.
\begin{array}{c}
j_h \\ j_2 \\ \vdots\\ j_{h-1}
\end{array}
\right].
$$
By repeating the above procedure of moving
left the element $e_{\alpha_1, {j_1}}$ - using the commutator identities in ${\mathbf{U}}(gl(m_0|m_1+n))$ -
we finally get
$$
\left[
\begin{array}{c}
i_1\\  i_2 \\ \vdots \\ i_{h-1} \\ i_h
\end{array}
\right| \left.
\begin{array}{c}
j_1 \\ j_2 \\ \vdots\\ j_{h-1} \\ j_h
\end{array}
\right] =
$$
\begin{align*}
=& \ \mathfrak{p} \big[ (-1)^{h - 1}
  e_{{i_1},\alpha_1} e_{\alpha_1, {j_1}}e_{{i_2},\alpha_2}   \cdots e_{{i_{h-1}},\alpha_{h-1}}
 e_{{i_{h}},\alpha_h} \cdot e_{\alpha_2, {j_2}}   \cdots e_{\alpha_{h-1}, {j_{h-1}}} e_{\alpha_h, {j_{h}}}
\\
&\phantom{\ \mathfrak{p} \big[} +
 \sum_{i = 0}^{h - 2} \ (-1)^i \
e_{{i_1},\alpha_1}    \cdots \delta_{i_{h-i}, j_1}\widehat{e_{{i_{h-i}},\alpha_{h-i}}}e_{\alpha_1,\alpha_{h-i}} \cdots
e_{{i_{h}},\alpha_{h}}
  \cdot  e_{\alpha_2, {j_2}}   \cdots   e_{\alpha_h, {j_{h}}} \big]
\\
=&  \ \mathfrak{p} \big[ (-1)^{h - 1}
  e_{{i_1},\alpha_1} e_{\alpha_1, {j_1}}e_{{i_2},\alpha_2}   \cdots e_{{i_{h-1}},\alpha_{h-1}}
 e_{{i_{h}},\alpha_h} \cdot e_{\alpha_2, {j_2}}   \cdots e_{\alpha_{h-1}, {j_{h-1}}} e_{\alpha_h, {j_{h}}}
\\
&\phantom{\ \mathfrak{p} \big[} +
\sum_{i = 0}^{h - 2} \ (-1)^i \
e_{{i_1},\alpha_1} \cdot\cdot \delta_{i_{h-i}, j_1} \cdots
e_{{i_{h}},\alpha_{h}}
  \cdot  e_{\alpha_2, {j_2}} \cdot\cdot  e_{\alpha_1,j_{h-i}}\cdot \cdot e_{\alpha_h, {j_{h}}} \big].
\end{align*}

Notice that the summand
$$
(-1)^i \ \delta_{i_{h-i}, j_1} \
e_{{i_1},\alpha_1} \cdot\cdot \delta_{i_{h-i}, j_1} \cdots
e_{{i_{h}},\alpha_{h}}
  \cdot  e_{\alpha_2, {j_2}} \cdot \cdot  e_{\alpha_1,j_{h-i}}\cdot \cdot e_{\alpha_h, {j_{h}}}
$$
equals
$$
(-1)^i \ \delta_{i_{h-i}, j_1} \ (-1)^{h-i-2} \times
$$
$$
e_{{i_1},\alpha_1}    \cdots \widehat{e_{i_{h-i},\alpha_{h-i}}} \cdots
e_{{i_{h}},\alpha_{h}}
  \cdot e_{\alpha_1,j_{h-i}}  e_{\alpha_2, {j_2}}   \cdots \widehat{e_{\alpha_{h-i}, {j_{h-i}}}} \dots  e_{\alpha_h, {j_{h}}}
$$
as elements of the algebra ${\mathbf{U}}(gl(m_0|m_1+n)).$

Hence
$$
\mathfrak{p} \left[ (-1)^i \ \delta_{i_{h-i}, j_1} \
e_{{i_1},\alpha_1}    \cdots \widehat{e_{{i_{h-i}},\alpha_{h-i}}}e_{\alpha_1,\alpha_{h-i}} \cdots
e_{{i_{h}},\alpha_{h}}
  \cdot  e_{\alpha_2, {j_2}}   \cdots   e_{\alpha_h, {j_{h}}} \right]
$$
equals
$$ (-1)^{h-2} \ \delta_{i_{h-i}, j_1} \
\left[
\begin{array}{c}
i_1\\  i_2 \\ \vdots \\  i_{h-i-1} \\\widehat{i_{h-i}} \\ i_{h-i+1} \\ i_h
\end{array}
\right| \left.
\begin{array}{c}
j_{h-i} \\ j_2 \\ \vdots \\  j_{h-i-1} \\ \widehat{j_{h-i}} \\ j_{h-i+1} \\ j_h
\end{array}
\right].
$$
Furthermore
$$
\mathfrak{p} \big[ (-1)^{h - 1}
  e_{{i_1},\alpha_1} e_{\alpha_1, {j_1}}e_{{i_2},\alpha_2}   \cdots e_{{i_{h-1}},\alpha_{h-1}}
 e_{{i_{h}},\alpha_h} \cdot e_{\alpha_2, {j_2}}   \cdots e_{\alpha_{h-1}, {j_{h-1}}} e_{\alpha_h, {j_{h}}}
\big] =
$$
$$
= (-1)^{h - 1} \ e_{i_1, j_1} \
\left[
\begin{array}{c}
i_2 \\ \vdots \\ i_{h-1} \\ i_h
\end{array}
\right| \left.
\begin{array}{c}
j_2 \\ \vdots\\ j_{h-1} \\ j_h
\end{array}
\right].
$$

By setting $k = h -i$, we proved the first expansion identity.
The second expansion identity can be proved in a similar way.
\end{proof}

\begin{example}

$$
\left[
\begin{array}{c}
1\\  2  \\ 3
\end{array}
\right| \left.
\begin{array}{c}
2 \\ 1  \\ 1
\end{array}
\right] = [1|2] \left[
\begin{array}{c}
2  \\ 3
\end{array}
\right| \left.
\begin{array}{c}
 1  \\ 1
\end{array}
\right] - \left[
\begin{array}{c}
1  \\ 3
\end{array}
\right| \left.
\begin{array}{c}
 1  \\ 1
\end{array}
\right] = - e_{12}e_{21}e_{31} + e_{11}e_{31} \in \mathbf{U}(gl(n)).
$$
Notice that
$$
\left[
\begin{array}{c}
1\\  2  \\ 3
\end{array}
\right| \left.
\begin{array}{c}
2 \\ 1  \\ 1
\end{array}
\right] = \left[
\begin{array}{c}
3\\  2  \\ 1
\end{array}
\right| \left.
\begin{array}{c}
1 \\ 1  \\ 2
\end{array}
\right] = [3|1]  \left[
\begin{array}{c}
  2  \\ 1
\end{array}
\right| \left.
\begin{array}{c}
 1  \\ 2
\end{array}
\right] - \left[
\begin{array}{c}
  3  \\ 2
\end{array}
\right| \left.
\begin{array}{c}
 2  \\ 1
\end{array}
\right] =
$$
$$
= - [3|1] ( [2|1][[1|2] - [2|2] ) + [2|1][3|2]  = -
e_{31}e_{21}e_{12} + e_{31}e_{22} + e_{21}e_{32} =
$$
$$
= \left[
\begin{array}{c}
 1  \\ 2 \\ 3
\end{array}
\right| \left.
\begin{array}{c}
 2  \\ 1 \\ 1
\end{array}
\right] = \left[
\begin{array}{c}
 1  \\ 2
\end{array}
\right| \left.
\begin{array}{c}
 2  \\ 1
\end{array}
\right]  [3|1] =
$$
$$
= (- [1|2][2|1] +[1|1]) [3|1] = - e_{12}e_{21}e_{31} + e_{11}e_{31}
\in \mathbf{U}(gl(n)).
$$

\end{example}\qed

\begin{remark}
Theorems \emph{\ref{KBT corr}} and \emph{\ref{image symm}},
in combination with Proposition \emph{\ref{devirtualization column}},
allows the \emph{explicit devirtualized} forms in $\mathbf{U}(gl(n))$ of Capelli bitableaux and of right
Young-Capelli bitableaux to be easily computed. The process can be illustrated by an example. Let $n \geq 2,
\ h = 3, \ \lambda = (2, 1).$ Consider the Capelli bitableaux
$$
\left[
\begin{array}{cc}
 1 & 2 \\  1
\end{array}
\right| \left.
\begin{array}{cc}
1 & 2 \\ 2 &
\end{array}
 \
\right] \in \mathbf{U}(gl(n)).
$$
By Theorem \emph{\ref{KBT corr}}:
\begin{align*}
\left[
\begin{array}{cc}
 1 & 2 \\  1
\end{array}
\right| \left.
\begin{array}{cc}
1 & 2 \\ 2 &
\end{array}
 \
\right]
&=
\mathcal{K}^{-1} \big(
\left(
\begin{array}{cc}
 1 & 2 \\  1
\end{array}
\right| \left.
\begin{array}{cc}
1 & 2 \\ 2 &
\end{array}
 \
\right)
\big)
\\
&=
\mathcal{K}^{-1} \big(
\left(
\begin{array}{c}
 1  \\ 2 \\ 1
\end{array}
\right| \left.
\begin{array}{c}
 1  \\ 2 \\ 2
\end{array}
\right)
-
\left(
\begin{array}{c}
 1  \\ 2 \\ 1
\end{array}
\right| \left.
\begin{array}{c}
 2  \\ 1 \\ 2
\end{array}
\right)
\big)
\\
&=
\left[
\begin{array}{c}
 1  \\ 2 \\ 1
\end{array}
\right| \left.
\begin{array}{c}
 1  \\ 2 \\ 2
\end{array}
\right]
-
\left[
\begin{array}{c}
 1  \\ 2 \\ 1
\end{array}
\right| \left.
\begin{array}{c}
 2  \\ 1 \\ 2
\end{array}
\right].
\end{align*}
By Proposition \emph{\ref{devirtualization column}},
\begin{align*}
&\left[
\begin{array}{c}
 1  \\ 2 \\ 1
\end{array}
\right| \left.
\begin{array}{c}
 1  \\ 2 \\ 2
\end{array}
\right]
=
- e_{11}e_{22}e_{12} + e_{12}e_{21} - e_{12} \in \mathbf{U}(gl(n)),
\\
&\left[
\begin{array}{c}
 1  \\ 2 \\ 1
\end{array}
\right| \left.
\begin{array}{c}
 2  \\ 1 \\ 2
\end{array}
\right]
=
- e_{12}e_{21}e_{12} + e_{12}e_{22} + e_{11}e_{12} - e_{12} \in \mathbf{U}(gl(n)).
\end{align*}
\end{remark}\qed

\subsubsection{Column Capelli bitableaux as polynomial differential operators on ${\mathbb C}[M_{n,d}]$}

The next result will play a crucial role in Section \ref{sec quantum}.
In the language of Procesi (\cite{Procesi-BR}, chapter $3$), it describes the action of
column Capelli bitableaux as elements of the {\textit{Weyl algebra}} associated to
the polynomial algebra ${\mathbb C}[M_{n,d}]$.

\begin{proposition}\label{column differential}
The action of the column Capelli bitableau
$$
\left[
\begin{array}{c}
i_1\\  i_2 \\ \vdots \\ i_h
\end{array}
\right| \left.
\begin{array}{c}
j_1\\  j_2 \\ \vdots \\ j_h
\end{array}
\right] \in \mathbf{U}(gl(n))
$$
on the algebra ${\mathbb C}[M_{n,d}]$ equals the action of the polynomial differential operator
$$
(-1)^{h \choose 2}
\sum_{(\varphi_1, \varphi_2, \ldots, \varphi_h) \in \underline{d}^h} \
(i_1|\varphi_1) (i_2|\varphi_2) \cdots (i_h|\varphi_h) \
\partial_{(j_1|\varphi_1)}
\ \partial_{(j_2|\varphi_2)} \cdots \partial_{(j_h|\varphi_h)},
$$
\end{proposition}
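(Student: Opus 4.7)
The plan is to use the virtual-variable presentation of the column Capelli bitableau and then translate the action of each elementary factor into a (super)polarization operator. By definition, specializing the construction of Section \ref{Capbit sub} to the column shape $\lambda=(1^h)$, we have
\[
\left[
\begin{array}{c}
i_1\\ \vdots \\ i_h
\end{array}
\right| \left.
\begin{array}{c}
j_1\\ \vdots \\ j_h
\end{array}
\right]
=
\mathfrak{p}\bigl( e_{i_1,\alpha_1}\,e_{i_2,\alpha_2}\cdots e_{i_h,\alpha_h}\cdot
e_{\alpha_1,j_1}\,e_{\alpha_2,j_2}\cdots e_{\alpha_h,j_h} \bigr),
\]
with $\alpha_1,\dots,\alpha_h\in A_0$ distinct positive virtual symbols. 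By Theorem \ref{Capelli epimorphism}, the action of this element on ${\mathbb C}[M_{n,d}]\subseteq {\mathbb C}[M_{m_0|m_1+n,d}]$ equals the action of the virtual monomial itself, each $e_{a,b}$ acting as the superpolarization $D_{a,b}=\sum_{p=1}^{d} (a|p)\,\partial_{(b|p)}$.

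The core step is to compute the action of the right block $e_{\alpha_1,j_1}\cdots e_{\alpha_h,j_h}$ on a polynomial $f\in{\mathbb C}[M_{n,d}]$, applying the factors right-to-left. Since $|D_{\alpha_k,j_k}|=|\alpha_k|+|j_k|=0+1=1$, each such operator is an \emph{odd} superderivation. Applying $D_{\alpha_h,j_h}$ first produces $\sum_{\varphi_h}(\alpha_h|\varphi_h)\,\partial_{(j_h|\varphi_h)}(f)$, an expression carrying one odd factor $(\alpha_h|\varphi_h)$. When $D_{\alpha_{h-1},j_{h-1}}$ is then applied, the signed Leibniz rule forces it to pick up a sign $(-1)^{1}$ as it passes this odd factor (it kills nothing on the way, since the $\alpha_l$'s are distinct from $j_{h-1}$). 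More generally, $D_{\alpha_{h-k},j_{h-k}}$ must pass through $k$ odd factors, incurring $(-1)^{k}$. Summing the exponents $0+1+\cdots+(h-1)=\binom{h}{2}$ yields the cumulative sign $(-1)^{\binom{h}{2}}$, so
\[
e_{\alpha_1,j_1}\cdots e_{\alpha_h,j_h}(f)
= (-1)^{\binom{h}{2}}\!\!\sum_{\varphi_1,\dots,\varphi_h}\!
(\alpha_h|\varphi_h)(\alpha_{h-1}|\varphi_{h-1})\cdots (\alpha_1|\varphi_1)\;
\partial_{(j_1|\varphi_1)}\cdots \partial_{(j_h|\varphi_h)}(f).
\]

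Now one applies the left block $e_{i_1,\alpha_1}\cdots e_{i_h,\alpha_h}$ to this expression. The rightmost operator $D_{i_h,\alpha_h}$ annihilates only the factor $(\alpha_h|\varphi_h)$, replacing it by the \emph{even} factor $(i_h|\varphi_h)$, which reduces the number of odd factors by one. Because the newly created $(i_k|\varphi_k)$'s are even, all subsequent $D_{i_{k-1},\alpha_{k-1}}$ pass through them without sign, and each in turn converts the leftmost remaining $(\alpha_l|\varphi_l)$ into $(i_l|\varphi_l)$. After exhausting all factors one obtains $(i_h|\varphi_h)(i_{h-1}|\varphi_{h-1})\cdots(i_1|\varphi_1)\cdot \partial_{(j_1|\varphi_1)}\cdots \partial_{(j_h|\varphi_h)}(f)$, and since the $(i_l|\varphi_l)$'s are all even they may be freely rearranged to $(i_1|\varphi_1)(i_2|\varphi_2)\cdots (i_h|\varphi_h)$, yielding the stated formula.

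The only real obstacle is the careful sign bookkeeping in the middle step: one must apply the signed Leibniz rule \emph{per factor}, and verify that no additional sign creeps in either (i) when passing odd $D_{\alpha_{h-k},j_{h-k}}$ past the already-created odd $(\alpha_l|\varphi_l)$'s, nor (ii) when passing odd $D_{i_k,\alpha_k}$ past the even $(i_l|\varphi_l)$'s produced at earlier stages of the second block. A one-line inductive check on $h$ confirms the sign accumulates exactly to $(-1)^{\binom{h}{2}}$, matching the prefactor $(-1)^{h\choose 2}$ in the target formula.
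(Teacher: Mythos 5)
Your proof is correct and follows essentially the same route as the paper's: present the column Capelli bitableau as the balanced monomial $e_{i_1,\alpha_1}\cdots e_{i_h,\alpha_h}\,e_{\alpha_1,j_1}\cdots e_{\alpha_h,j_h}$, let it act by superpolarizations, and track the signs from the super-Leibniz rule through the two blocks. The only (immaterial) difference is a normal-ordering choice: you place the odd factors $(\alpha_l|\varphi_l)$ on the left so the $(-1)^{\binom{h}{2}}$ accumulates in the first block, whereas the paper places them on the right of the even part (its Eq.\ (\ref{virtual part}) is sign-free) and collects the same $(-1)^{\binom{h}{2}}$ in the second block; since reversing $h$ odd factors itself costs $(-1)^{\binom{h}{2}}$, the two bookkeepings agree.
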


\begin{proof}
Consider a monomial $\mathbb{M} \in {\mathbb C}[M_{n,d}]$,
$$
\mathbb{M} = \prod_{i = 1}^n \ (i|1)^{s_{i1}}(i|2)^{s_{i2}} \cdots (i|d)^{s_{id}}
$$
and let $\alpha \in A_0$ be a {\textit{positive virtual symbol}}.
Given $j_h = 1, 2, \ldots, n$, consider the action of the superpolarization
$D_{\alpha, j_h}$ on the supersymmetric algebra  $ \mathbb{C}[M_{m_0|m_1+n,d}] \supseteq \mathbb C[M_{n,d}]$.
A straightforward computation shows that
\begin{equation}\label{differential super}
D_{\alpha, j_h}\big( \mathbb{M} \big) = \sum_{\varphi = 1}^d \ \partial_{(j_1|\varphi)}\big( \mathbb{M} \big)(\alpha|\varphi).
\end{equation}
Furthermore, notice that:
$$
D_{{\alpha_s}, {j_k}}D_{{\alpha_t}, {j_h}}\big( \mathbb{M} \big) = \sum_{\varphi = 1}^d \ \Big( D_{\alpha_s, j_k}
\big( \partial_{(j_1|\varphi)} \big( \mathbb{M} \big) \big) \Big) \ (\alpha_t|\varphi),
$$
that equals
\begin{equation}\label{commmutation}
\sum_{\varphi_1, \varphi_2 = 1, 2, \ldots, d} \
\Big( \partial_{(j_k|\varphi_1)} \partial_{(j_h|\varphi_2)} \big( \mathbb{M} \big) \Big) \ (\alpha_s|\varphi_1)(\alpha_t|\varphi_2).
\end{equation}

Recall that the action of the column Capelli bitableau
$$
\left[
\begin{array}{c}
i_1\\  i_2 \\ \vdots \\ i_h
\end{array}
\right| \left.
\begin{array}{c}
j_1\\  j_2 \\ \vdots \\ j_h
\end{array}
\right] \in \mathbf{U}(gl(n))
$$
on the algebra ${\mathbb C}[M_{n,d}]$ is implemented by the product of superpolarizations
$$
D_{i_1, \alpha_1} \cdots D_{i_{h-1}, \alpha_{h-1}}  D_{i_h, \alpha_h} D_{\alpha_1, j_1} \cdots D_{\alpha_{h-1} , j_{h-1}} D_{\alpha_h, j_h},
$$
where $\alpha_1, \ldots, \alpha_{h-1}, \alpha_h$ are distinct arbitrary positive virtual symbols.
Note that $| D_{i_r, \alpha_r}| = |D_{\alpha_r, j_r}| = 1 \in \mathbb{Z}_2$, for every $r = 1,2, \ldots, h.$

From Eqs. (\ref{differential super}) and (\ref{commmutation}), it immediately follows:
\begin{equation}\label{virtual part}
D_{\alpha_1, j_1} \cdots  D_{\alpha_h, j_h}\big( \mathbb{M} \big) =
\sum_{(\varphi_1, \ldots, \varphi_h) \in \underline{d}^h}
 \partial_{(j_1|\varphi_1)} \cdots \partial_{(j_h|\varphi_h)}\big( \mathbb{M} \big)
 (\alpha_1|\varphi_1)  \cdots (\alpha_h|\varphi_h)
\end{equation}

Since $|(\alpha_r|\varphi_r)| = 1 \in \mathbb{Z}_2$, for every $r = 1,2, \ldots, h,$ from
Eq. (\ref{virtual part}), we infer:
$$
D_{i_1, \alpha_1} \cdots D_{i_{h-1}, \alpha_{h-1}}  D_{i_h, \alpha_h}
\Big( D_{\alpha_1, j_1} \cdots D_{\alpha_{h-1} , j_{h-1}} D_{\alpha_h, j_h}\big( \mathbb{M} \big) \Big)
$$
equals
$$
(-1)^{h \choose 2}
\sum_{(\varphi_1, \varphi_2, \ldots, \varphi_h) \in \underline{d}^h} \
(i_1|\varphi_1) (i_2|\varphi_2) \cdots (i_h|\varphi_h) \ \partial_{(j_1|\varphi_1)} \ \partial_{(j_2|\varphi_2)}
\cdots \partial_{(j_h|\varphi_h)}\big( \mathbb{M} \big).
$$
\end {proof}

\section{Capelli immanants and Young-Capelli bitableaux in $\mathbf{U}(gl(n))$}\label{Capimm sect}}

The \emph{bitableaux correspondence} (linear)  isomorphism
$$
\mathcal{K}^{-1} : {\mathbb C}[M_{n,n}] \rightarrow \mathbf{U}(gl(n)),
$$
leads to the following \emph{natural} definition of  {\textit{Capelli
immanant}}
$$
Cimm_{\lambda}[i_1 i_2 \cdots i_h;j_1 j_2 \cdots j_h]
$$
in  the enveloping algebra in $\mathbf{U}(gl(n))$:
$$
Cimm_{\lambda}[i_1 i_2 \cdots i_h;j_1 j_2 \cdots j_h] = \mathcal{K}^{-1}  \Big( imm_{\lambda}(i_1 i_2 \cdots i_h;j_1 j_2 \cdots j_h) \Big).
$$

By linearity of the operator $\mathcal{K}^{-1}$, we get
\begin{align*}
Cimm_{\lambda}[i_1 i_2 \cdots i_h;j_1 j_2 \cdots j_h] & =
\sum_{\sigma \in \mathbf{S}_h} \ \chi^{\lambda}(\sigma) \left[
\begin{array}{c}
i_{\sigma(1)}\\  i_{\sigma(2)} \\ \vdots \\ i_{\sigma(h)}
\end{array}
\right| \left.
\begin{array}{c}
j_1\\  j_2 \\ \vdots \\ j_h
\end{array}
\right]
\\
& =\sum_{\sigma \in \mathbf{S}_h} \ \chi^{\lambda}(\sigma)  \left[
\begin{array}{c}
i_1\\  i_2 \\ \vdots \\ i_h
\end{array}
\right| \left.
\begin{array}{c}
j_{\sigma(1)}\\  j_{\sigma(2)} \\ \vdots \\ j_{\sigma(h)}
\end{array}
\right].
\end{align*}

Clearly, the notion of Capelli immanants provides a natural generalization of the notion of
Capelli determinant  (see Example \ref{Capelli determinants-BR}).

Since a Young-Capelli bitableau $[U|\fbox{$V$}] \in \mathbf{U}(gl(n))$ is the
image of the right symmetrized bitableau $(U|\fbox{$V$}) \in {\mathbb C}[M_{n,n}]$
with respect to the isomorphism $\mathcal{K}^{-1}$,
Proposition \ref{imm vs bit} implies

\begin{theorem}\label{Cimm vs YC}
Let $\lambda \vdash h$. Any Capelli immanant $Cimm_{\lambda}[i_1 i_2 \cdots i_h;j_1 j_2 \cdots j_h] $
can be written as  a linear combination of standard Young-Capelli bitableaux $[U| \fbox{$V$}]$
in $\mathbf{U}(gl(n))$ of the \textbf{same} shape $\lambda$:
\begin{multline*}
Cimm_{\lambda}[i_1 i_2 \cdots i_h;j_1 j_2 \cdots j_h] =
 \sum_{U, V} \ \varrho_{U,V}  \ [U|\fbox{$V$}], \\ \varrho_{U,V} \in {\mathbb C}, \quad sh(U) = sh(V) = \lambda.
\end{multline*}
\end{theorem}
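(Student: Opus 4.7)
The plan is to reduce the statement to its polynomial counterpart, Proposition \ref{imm vs bit}, and then transport that decomposition across the bitableaux correspondence isomorphism $\mathcal{K}^{-1}$. Since Capelli immanants are defined precisely as images of classical immanants under $\mathcal{K}^{-1}$, and Young-Capelli bitableaux are images of right symmetrized bitableaux under the same isomorphism, the argument is essentially a one-line transport of structure.

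First I would write out the definition
\begin{equation*}
Cimm_{\lambda}[i_1 i_2 \cdots i_h;j_1 j_2 \cdots j_h] = \mathcal{K}^{-1}\bigl( imm_{\lambda}(i_1 i_2 \cdots i_h;j_1 j_2 \cdots j_h) \bigr).
\end{equation*}
By Proposition \ref{imm vs bit}, applied in ${\mathbb C}[M_{n,n}]$, the right-hand immanant admits a unique expansion
\begin{equation*}
imm_{\lambda}(i_1 i_2 \cdots i_h;j_1 j_2 \cdots j_h) = \sum_{U,V} \varrho_{U,V}\, (U|\fbox{$V$}), \qquad sh(U)=sh(V)=\lambda,
\end{equation*}
where the sum runs over standard pairs and the coefficients $\varrho_{U,V} \in {\mathbb C}$ are the very same scalars produced in the proof of that proposition (via the natural units $\mathbf{\gamma}^{\lambda}_{\mathbf{T}\mathbf{T}}$ and the expansion of the hook-normalized character idempotent).

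Next I would apply $\mathcal{K}^{-1}$ to both sides. By linearity of $\mathcal{K}^{-1}$ (Theorem \ref{KBT corr}) and by Theorem \ref{image symm}, which tells us $\mathcal{K}^{-1}\bigl((U|\fbox{$V$})\bigr) = [U|\fbox{$V$}]$, one obtains
\begin{equation*}
Cimm_{\lambda}[i_1 i_2 \cdots i_h;j_1 j_2 \cdots j_h] = \sum_{U,V} \varrho_{U,V}\, [U|\fbox{$V$}], \qquad sh(U)=sh(V)=\lambda,
\end{equation*}
with the \emph{same} coefficients $\varrho_{U,V}$. The fact that the sum is confined to standard pairs of shape exactly $\lambda$ is inherited verbatim from Proposition \ref{imm vs bit}, and the fact that the expansion is into the Gordan-Capelli-type basis $\{[U|\fbox{$V$}]\}$ of $\mathbf{U}(gl(n))$ is guaranteed by Theorem \ref{Y-C basis}.

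There is no real obstacle in this argument; the substantive content has already been packaged in Proposition \ref{imm vs bit} (where the shape-preservation relies on the orthogonality of isotypic components of ${\mathbb C}[\mathbf{S}_h]$ expressed in Proposition \ref{symm repr}) and in the compatibility of $\mathcal{K}^{-1}$ with symmetrization (Theorem \ref{image symm}). The only conceptual point worth stressing explicitly in the write-up is that the coefficients $\varrho_{U,V}$ on the polynomial and enveloping-algebra sides coincide, which is immediate from the linearity of $\mathcal{K}^{-1}$ and the uniqueness of the standard expansion in both $\mathcal{C}_h^{\lambda}[M_{n,n}]$ and its image under $\mathcal{K}^{-1}$.
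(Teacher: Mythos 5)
Your proposal is correct and follows essentially the same route as the paper, which derives Theorem \ref{Cimm vs YC} in one line by applying $\mathcal{K}^{-1}$ to the expansion of Proposition \ref{imm vs bit} and invoking Theorem \ref{image symm} to identify $\mathcal{K}^{-1}\bigl((U|\fbox{$V$})\bigr)$ with $[U|\fbox{$V$}]$. Your write-up simply makes explicit the linearity and coefficient-preservation that the paper leaves implicit.
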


From Corollary \ref{zero imm}, it follows:

\begin{corollary}\label{zero Cimm}
Let $\lambda \vdash h$. If $\lambda_1  \nleq n$, then
$$
Cimm_{\lambda}(i_1 i_2 \cdots i_h;j_1 j_2 \cdots j_h) = 0.
$$
\end{corollary}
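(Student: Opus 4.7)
The plan is to derive this immediately from the corresponding vanishing statement for classical immanants (Corollary \ref{zero imm}) and the linearity of the bitableaux correspondence isomorphism $\mathcal{K}^{-1}$. By definition,
\[
Cimm_{\lambda}[i_1 i_2 \cdots i_h; j_1 j_2 \cdots j_h] = \mathcal{K}^{-1}\bigl(imm_{\lambda}(i_1 i_2 \cdots i_h; j_1 j_2 \cdots j_h)\bigr),
\]
so the whole question reduces to showing that the argument of $\mathcal{K}^{-1}$ already vanishes in ${\mathbb C}[M_{n,n}]$.

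First, I would invoke the ambient setting of Section \ref{sect Kosz}, where $\mathcal{K}^{-1}$ is defined on the polynomial algebra ${\mathbb C}[M_{n,n}]$ (i.e., $d = n$). Thus $\min\{n, d\} = n$, and the hypothesis $\lambda_1 \nleq n$ is precisely the hypothesis $\lambda_1 \nleq \min\{n, d\}$ of Corollary \ref{zero imm}. Applying that corollary gives
\[
imm_{\lambda}(i_1 i_2 \cdots i_h; j_1 j_2 \cdots j_h) = 0 \quad \text{in } {\mathbb C}[M_{n,n}].
\]
Since $\mathcal{K}^{-1}$ is a linear map, it sends $0$ to $0$, and the claim follows at once.

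As an alternative that keeps everything inside $\mathbf{U}(gl(n))$, I could instead invoke Theorem \ref{Cimm vs YC}: any Capelli immanant of shape $\lambda$ expands into standard Young-Capelli bitableaux $[U|\fbox{$V$}]$ of the \emph{same} shape $\lambda$. Now by Theorem \ref{image symm}, $[U|\fbox{$V$}] = \mathcal{K}^{-1}\bigl((U|\fbox{$V$})\bigr)$, and by Theorem \ref{GC basis} the right-symmetrized bitableau $(U|\fbox{$V$})$ vanishes whenever $\lambda_1 \nleq n$. Hence every summand in the expansion of Theorem \ref{Cimm vs YC} is zero, and therefore so is the Capelli immanant.

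There is no real obstacle: the corollary is a transparent transfer of a known vanishing result from ${\mathbb C}[M_{n,n}]$ to $\mathbf{U}(gl(n))$ through the linear isomorphism $\mathcal{K}^{-1}$. The only thing to be careful about is bookkeeping of the parameter $d$ (setting $d = n$ when specializing to ${\mathbb C}[M_{n,n}]$), so that the hypothesis of Corollary \ref{zero imm} is actually met.
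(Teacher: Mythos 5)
Your first argument is exactly the paper's: the paper derives Corollary \ref{zero Cimm} directly from Corollary \ref{zero imm} by applying the linear map $\mathcal{K}^{-1}$ (with $d=n$, so $\min\{n,d\}=n$) to the vanishing classical immanant. The proposal is correct and matches the paper's reasoning; the alternative route via Theorem \ref{Cimm vs YC} is a fine, equally valid variant but not needed.
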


Furthermore,  Proposition \ref{bit vs imm} implies
\begin{theorem}\label{Cbit vs Cimm}
Let $\lambda \vdash h$. Any Young-Capelli bitableau $[U| \fbox{$V$}]$  in $\mathbf{U}(gl(n))$ of shape $sh(U) = sh(V) = \lambda$
can be written as  a linear combination of Capelli immanants $Cimm_{\lambda}[i_1 i_2 \cdots i_h;j_1 j_2 \cdots j_h]$
associated to the \textbf{same} shape $\lambda$.
\end{theorem}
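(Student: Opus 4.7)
The plan is to transport the analogous identity from the polynomial algebra ${\mathbb C}[M_{n,n}]$ to the enveloping algebra $\mathbf{U}(gl(n))$ via the linear isomorphism $\mathcal{K}^{-1}$ of the BCK Theorem (Theorem \ref{KBT corr}). Since Proposition \ref{bit vs imm} already furnishes, in the polynomial algebra, the statement that any right symmetrized bitableau $(U|\fbox{$V$})$ of shape $\lambda \vdash h$ is a linear combination of immanants $imm_{\lambda}(i_1 i_2 \cdots i_h;j_1 j_2 \cdots j_h)$ of the \emph{same} shape $\lambda$, the dictionary provided by $\mathcal{K}^{-1}$ will immediately convert this into the desired identity in $\mathbf{U}(gl(n))$.

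First I would invoke Proposition \ref{bit vs imm} to write, in ${\mathbb C}[M_{n,n}]$,
\begin{equation*}
(U|\fbox{$V$}) \;=\; \sum_{(i_1, \ldots, i_h),\, (j_1, \ldots, j_h)} c_{I,J}\; imm_{\lambda}(i_1 i_2 \cdots i_h;\, j_1 j_2 \cdots j_h),
\end{equation*}
with scalar coefficients $c_{I,J} \in {\mathbb C}$ and the sum over finitely many multi-indices. Then I would apply the linear operator $\mathcal{K}^{-1}$ to both sides. By Theorem \ref{image symm}, $\mathcal{K}^{-1}$ sends $(U|\fbox{$V$})$ to the Young-Capelli bitableau $[U|\fbox{$V$}]$; by the very definition of Capelli immanants given just before Theorem \ref{Cimm vs YC}, $\mathcal{K}^{-1}$ sends $imm_{\lambda}(i_1 \cdots i_h;\, j_1 \cdots j_h)$ to $Cimm_{\lambda}[i_1 \cdots i_h;\, j_1 \cdots j_h]$. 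Linearity of $\mathcal{K}^{-1}$ therefore yields
\begin{equation*}
[U|\fbox{$V$}] \;=\; \sum_{(i_1, \ldots, i_h),\, (j_1, \ldots, j_h)} c_{I,J}\; Cimm_{\lambda}[i_1 i_2 \cdots i_h;\, j_1 j_2 \cdots j_h],
\end{equation*}
which is precisely the desired expansion, with all Capelli immanants of the \emph{same} shape $\lambda$.

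There is essentially no obstacle here: the argument is a direct transport of structure. The only point worth checking is that the shape information is preserved under $\mathcal{K}^{-1}$, but this is automatic since $\mathcal{K}^{-1}$ is defined tableau-by-tableau and does not mix shapes. Note also that this proof is entirely parallel to the proof of Theorem \ref{Cimm vs YC}, which was obtained by applying $\mathcal{K}^{-1}$ to Proposition \ref{imm vs bit}; together, Theorems \ref{Cimm vs YC} and \ref{Cbit vs Cimm} express the fact that, within each shape $\lambda \vdash h$, the spans of standard Young-Capelli bitableaux and of Capelli immanants of shape $\lambda$ coincide in $\mathbf{U}(gl(n))$.
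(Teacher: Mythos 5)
Your proposal is correct and is exactly the paper's argument: the paper derives Theorem \ref{Cbit vs Cimm} by invoking Proposition \ref{bit vs imm} and transporting it through the linear isomorphism $\mathcal{K}^{-1}$, using Theorem \ref{image symm} and the definition of Capelli immanants as images of immanants, just as you spell out.
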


Proposition \ref{immanant span} implies

\begin{theorem}\label{Capelli immanant span}
The set of Capelli immanants
$$
 \bigcup_{k = 0}^h  \  \big\{ Cimm_{\lambda}[i_1 i_2 \cdots i_k;j_1 j_2 \cdots j_k]; \lambda \vdash k, \lambda_1 \leq n,
 \
(i_1 i_2 \cdots i_k), (j_1 j_2 \cdots j_k) \in \underline{n}^k \big\}
$$
is a spanning set of $\mathbf{U}(gl(n))^{(h)}.$
\end{theorem}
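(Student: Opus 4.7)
The plan is to obtain the theorem as an immediate consequence of Proposition \ref{immanant span} combined with Corollary \ref{KBT filtr}, exploiting the defining property that $\mathcal{K}^{-1}$ sends every classical immanant to the corresponding Capelli immanant.

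First I would specialize Proposition \ref{immanant span} to $d = n$: for every fixed $k$, the set of classical immanants $\{imm_\lambda(i_1\cdots i_k;j_1\cdots j_k) : \lambda \vdash k,\ \lambda_1 \leq n,\ (i_1\cdots i_k),(j_1\cdots j_k) \in \underline{n}^k\}$ is a spanning set of the homogeneous component $\mathbb{C}_k[M_{n,n}]$. Forming the union for $k = 0, 1, \ldots, h$ therefore yields a spanning set of the direct sum $\bigoplus_{k=0}^h \mathbb{C}_k[M_{n,n}]$.

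Next I would apply the linear isomorphism $\mathcal{K}^{-1}: \bigoplus_{k=0}^h \mathbb{C}_k[M_{n,n}] \to \mathbf{U}(gl(n))^{(h)}$ provided by Corollary \ref{KBT filtr}. Since $\mathcal{K}^{-1}$ is $\mathbb{C}$-linear and carries spanning sets to spanning sets, and since by construction $\mathcal{K}^{-1}\bigl(imm_\lambda(i_1\cdots i_k;j_1\cdots j_k)\bigr) = Cimm_\lambda[i_1\cdots i_k;j_1\cdots j_k]$, the image of the spanning set described above is precisely the family of Capelli immanants appearing in the statement, which must then span $\mathbf{U}(gl(n))^{(h)}$.

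There is no real obstacle here: the result is a direct transport of structure along the BCK isomorphism. The only minor point requiring attention is to check that each Capelli immanant of shape $\lambda \vdash k$ with $k \leq h$ genuinely lies in the filtration level $\mathbf{U}(gl(n))^{(h)}$, but this is immediate from the compatibility of $\mathcal{K}^{-1}$ with the grading on $\mathbb{C}[M_{n,n}]$ and the filtration on $\mathbf{U}(gl(n))$ recorded in Corollary \ref{KBT filtr}; one may also observe, alternatively, that Theorem \ref{Cimm vs YC} expresses each $Cimm_\lambda$ of shape $\lambda \vdash k$ as a linear combination of standard Young-Capelli bitableaux of that same shape, which by Theorem \ref{Y-C basis} are basis elements of $\mathbf{U}(gl(n))^{(k)} \subseteq \mathbf{U}(gl(n))^{(h)}$.
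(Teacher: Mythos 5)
Your proof is correct and follows essentially the same route as the paper, which derives the theorem directly from Proposition \ref{immanant span} by transporting the spanning property along the isomorphism $\mathcal{K}^{-1}$ of Corollary \ref{KBT filtr}, under which $imm_{\lambda}(i_1 \cdots i_k;j_1 \cdots j_k)$ is sent by definition to $Cimm_{\lambda}[i_1 \cdots i_k;j_1 \cdots j_k]$. You have simply made explicit the details (specialization to $d=n$, the union over $k \leq h$, and the filtration compatibility) that the paper leaves implicit.
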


\section{Quantum immanants}\label{sec quantum}

Our main result is a description of {\textit{quantum immanants}} as simple linear combinations
of {\textit{Capelli immanants}}.  This result -- in combination with Proposition \ref{devirtualization column} --
allows the computation of quantum immanants as elements of $\mathbf{U}(gl(n))$ to be reduced to a fairly simple process
(see, e.g. Example \ref{Schur example} below).

Quantum immanants  are  the preimages
of the shifted Schur polynomials  \cite{Sahi1-BR},  \cite{OkOlsh-BR},
with respect to the Harish-Chandra isomorphism.

We  follow the notational conventions of Okounkov \cite{Okounkov-BR} and \cite{Okounkov1-BR}.

\begin{remark}
Given a partition $\mu \vdash h$, $V^{\mu}$ denotes the irreducible representation associated to $\mu$
in the sense of James and Kerber \cite{JK}. We  recall that the  irreducible representation $V^{\mu}$ is the representation associated to the shape $\widetilde{\mu}$ in the notation of the previous sections of this work.
\end{remark}

Furthermore:

\begin{itemize}

\item[--] $\mathbf{T}$ denotes a \emph{multilinear standard} Young tableau of shape $sh(\mathbf{T}) = \mu \vdash h$.

\item[--] For every $s = 1, 2, \ldots, h,$ let $(i, j)$ be the pair of row and column indices of the cell
of $\mathbf{T}$ that contains $s$. Set  $\mathbf{c}_\mathbf{T}(s) = j - i$ (the ``Frobenius content'' of the cell $(i, j)$).

\item[--] $v_\mathbf{T}$ denotes the element of the {\textit{seminormal Young basis}} of
$V^{\mu}$ associated to the multilinear standard tableau $\mathbf{T}$.
Since each basis vector is defined only up to a scalar factor, we assume that $(v_\mathbf{T}, v_\mathbf{T}) = 1$
(see   Okounkov and Vershik \cite{OkVer-BR}; for a more traditional approach, see James and Kerber \cite{JK}).

\item[--] given the element
$$
\Psi_\mathbf{T} = \sum_{\sigma \in  \mathbf{S}_h} \ (\sigma \cdot v_\mathbf{T}, v_\mathbf{T})\sigma^{-1}
\in \mathbb{C}[\mathbf{S}_h],
$$
$\Psi_\mathbf{T}^h$ denotes the matrix that represents the element
 $\Psi_\mathbf{T}$ as a linear operator on the tensor space $(\mathbb{C}^n)^{\otimes h}$.

\item[--] Let $E = [e_{ij}]_{i, j = 1,2 , \ldots ,n}$ be the matrix whose entries are the elements
of the standard basis of $gl(n)$.

\item[--] Let
$$
\mathbb{E}_\mathbf{T} =
 \big( (E - \mathbf{c}_\mathbf{T}(1)) \otimes (E - \mathbf{c}_\mathbf{T}(2)) \otimes \cdots \otimes
 (E - \mathbf{c}_\mathbf{T}(h)) \big) \  \Psi_\mathbf{T}^h
$$
be the {\textit{fusion}} matrix; the fusion matrix  $\mathbb{E}_\mathbf{T}$ is a
$(n^h \times n^h)-$matrix with entries in $\mathbf{U}(gl(n)).$
\end{itemize}

 Following Okounkov (\cite{Okounkov1-BR}, \cite{Okounkov-BR}), the element
$$
Tr \big( \mathbb{E}_\mathbf{T} \big) \in \mathbf{U}(gl(n))^{(h)}
$$
is  the {\textit{quantum immanant}} associated to the multilinear standard tableau $\mathbf{T}$, $sh(\mathbf{T}) = \mu$.

The {\textit{higher Capelli identities}} (\cite{Okounkov1-BR}, \cite{Okounkov-BR}),
imply (\cite{Okounkov1-BR}, Eq. (5.1)) that the action
of the quantum immanant
$$
Tr \big( \mathbb{E}_\mathbf{T} \big), \quad sh(\mathbf{T}) = \mu
$$
on the algebra ${\mathbb C}[M_{n,d}]$ equals the action of the polynomial differential operator
\begin{equation}\label{higher Capelli}
\frac {1} {dim(V^{\mu})} \ Tr \Big( X^{\otimes h} \ (D')^{\otimes h} \ \overline{\chi}_{\widetilde{\mu}}^h \Big),
\end{equation}
where
\begin{itemize}

\item[--] $X$ denotes the matrix $\big[ (i|\varphi) \big]_{i = 1,  \ldots, n; \varphi = 1, \ldots, d} $

\item[--] $D$ denotes the matrix $\big[ \partial_{(i|\varphi)} \big]_{i = 1,  \ldots, n; \varphi = 1, \ldots, d} $ of
partial derivatives on the algebra ${\mathbb C}[M_{n,d}]$,
and the prime stands for transposition.

\item[--] $\overline{\chi}_{\widetilde{\mu}}^h$ denotes the matrix that represents the element
$$
\overline{\chi}_{\widetilde{\mu}} = \sum_{\sigma \in \mathbf{S}_h} \ \chi^{\widetilde{\mu}}(\sigma) \sigma \in {\mathbb C}[\mathbf{S}_h]
$$
of Eq. (\ref{character})  as a linear operator on the tensor space $(\mathbb{C}^n)^{\otimes h}$.

\end{itemize}

Since the action of $\mathbf{U}(gl(n))$
on the algebra ${\mathbb C}[M_{n,d}]$ is a faithful action whenever $n \leq d$,
and the differential operator of Eq. (\ref{higher Capelli}) is independent from the choice
of the multilinear standard tableau $\mathbf{T}$, the quantum immanant $Tr \big( \mathbb{E}_\mathbf{T} \big) $ only depends
on the shape $\mu$.

\begin{theorem}\label{quantum}
The quantum immanant
$$
Tr \big( \mathbb{E}_\mathbf{T} \big), \quad sh(\mathbf{T}) = \mu
$$
equals the linear combination of Capelli immanants:
\begin{equation}\label{quantum first}
(-1)^{h \choose 2}
\sum_{h_1 + h_2 + \cdots + h_n = h} \ \frac {H(\mu)} {h_1! h_2! \cdots h_n!}
 \ Cimm_{\widetilde{\mu}}[1^{h_1}2^{h_2} \ldots n^{h_n}; 1^{h_1}2^{h_2} \ldots n^{h_n}],
\end{equation}
where $1^{h_1}2^{h_2} \ldots n^{h_n}$ is a short notation for the non decreasing sequence
$i_1 i_2 \cdots i_h$ with
$$
h_p = \sharp \{i_q = p;\ q = 1, 2, \ldots, h \}, \quad p = 1, 2, \ldots, n.
$$
\end{theorem}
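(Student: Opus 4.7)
The plan is to deduce Theorem \ref{quantum} from the faithfulness of the $\mathbf{U}(gl(n))$-action on $\mathbb{C}[M_{n,d}]$ (valid for $d \geq n$) by showing that both sides of \eqref{quantum first} act as the \emph{same} polynomial differential operator. The LHS already has a known differential-operator form, given by Okounkov's higher Capelli identity \eqref{higher Capelli}. The RHS will be computed by applying Proposition \ref{column differential} termwise. So the heart of the proof is a bookkeeping calculation matching these two operators.

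First I would choose $d \geq n$ and regard both sides of \eqref{quantum first} as elements of $\mathbf{U}(gl(n))$ acting on $\mathbb{C}[M_{n,d}]$. On the LHS, I would expand the trace: for a fixed $\sigma \in \mathbf{S}_h$, a direct matrix-element computation in $(\mathbb{C}^n)^{\otimes h}$ gives
\begin{equation*}
Tr\bigl( X^{\otimes h} (D')^{\otimes h}\, \sigma^h \bigr) \;=\;
\sum_{(i_1, \ldots, i_h)\in \underline{n}^h}\;\sum_{(\varphi_1, \ldots, \varphi_h) \in \underline{d}^h}\ (i_{\sigma(1)}|\varphi_1)\cdots(i_{\sigma(h)}|\varphi_h)\ \partial_{(i_1|\varphi_1)}\cdots\partial_{(i_h|\varphi_h)},
\end{equation*}
so that, after weighting by $\chi^{\widetilde{\mu}}(\sigma)$ and summing over $\sigma$, the action of the quantum immanant becomes
$$
\frac{1}{\dim V^{\mu}}\sum_{(i_1, \ldots, i_h)\in \underline{n}^h}\ \sum_{\sigma \in \mathbf{S}_h}\chi^{\widetilde{\mu}}(\sigma)\sum_{(\varphi)\in \underline{d}^h} (i_{\sigma(1)}|\varphi_1)\cdots(i_{\sigma(h)}|\varphi_h)\,\partial_{(i_1|\varphi_1)}\cdots\partial_{(i_h|\varphi_h)}.
$$

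Next I would turn to the RHS. By Proposition \ref{column differential} and the definition of the Capelli immanant, the action of $Cimm_{\widetilde{\mu}}[i_1\cdots i_h;\, i_1\cdots i_h]$ on $\mathbb{C}[M_{n,d}]$ is exactly
$$
(-1)^{h \choose 2}\ \sum_{\sigma\in\mathbf{S}_h}\chi^{\widetilde{\mu}}(\sigma)\sum_{(\varphi)\in\underline{d}^h}(i_{\sigma(1)}|\varphi_1)\cdots(i_{\sigma(h)}|\varphi_h)\,\partial_{(i_1|\varphi_1)}\cdots\partial_{(i_h|\varphi_h)}.
$$
Comparing with the expansion above, I obtain the operator identity
$$
\tfrac{1}{\dim V^{\mu}}\,Tr\bigl(X^{\otimes h}(D')^{\otimes h}\overline{\chi}_{\widetilde{\mu}}^{\,h}\bigr)
\;=\; \tfrac{(-1)^{h \choose 2}}{\dim V^{\mu}}\sum_{(i)\in \underline{n}^h} Cimm_{\widetilde{\mu}}[i_1\cdots i_h;\, i_1\cdots i_h].
$$
I would then verify (using the invariance of the character $\chi^{\widetilde{\mu}}$ under conjugation together with the row-permutation invariance of column Capelli bitableaux recalled after Corollary \ref{KBT filtr}) that any diagonal Capelli immanant $Cimm_{\widetilde{\mu}}[i_1\cdots i_h;\, i_1\cdots i_h]$ is invariant under simultaneous permutation of the entries $(i_1,\ldots,i_h)$. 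Grouping the index sequences by their underlying multiset $\{1^{h_1}, 2^{h_2}, \ldots, n^{h_n}\}$ converts $\sum_{(i)\in\underline{n}^h}$ into $\sum_{h_1+\cdots+h_n=h}\frac{h!}{h_1!\cdots h_n!}\,Cimm_{\widetilde{\mu}}[1^{h_1}\cdots n^{h_n};\,1^{h_1}\cdots n^{h_n}]$.

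Finally, the hook length formula gives $\dim V^{\mu} = h!/H(\mu)$ (hook lengths coincide for $\mu$ and $\widetilde{\mu}$), so the global prefactor collapses to $(-1)^{h\choose 2} H(\mu)$, matching \eqref{quantum first}. Faithfulness of the action for $d\ge n$ then upgrades the equality of differential operators to an equality in $\mathbf{U}(gl(n))$. The main obstacles are essentially notational: (i) correctly identifying the shape conventions (the paper's $\chi^{\widetilde{\mu}}$ versus Okounkov--James--Kerber's $V^{\mu}$, and the placement of the tilde throughout \eqref{higher Capelli}), and (ii) carefully tracking the sign $(-1)^{h\choose 2}$ that enters from Proposition \ref{column differential}; once these are aligned, the combinatorial identification is straightforward.
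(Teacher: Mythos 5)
Your proposal is correct and follows essentially the same route as the paper: expand the trace in Okounkov's higher Capelli identity into the operators $\sum_{\overline{\varphi}}(i_{\sigma(1)}|\varphi_1)\cdots\partial_{(i_h|\varphi_h)}$, identify these termwise with the diagonal Capelli immanants via Proposition \ref{column differential}, invoke faithfulness of the action for $d\ge n$, and then collect index sequences by multiset using the permutation invariance of $Cimm_{\widetilde{\mu}}[\overline{i};\overline{i}]$ together with $\dim V^{\mu}=h!/H(\mu)$. The only cosmetic difference is that you place the permutation on the creation factors rather than on the derivatives, which is equivalent after replacing $\sigma$ by $\sigma^{-1}$ and using that $\chi^{\widetilde{\mu}}$ is a class function.
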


\begin{proof}

For every $\sigma \in \mathbf{S}_h$, $\overline{i} = (i_1, \ldots, i_h) \in \underline{n}^h$,
$\overline{\varphi} = (\varphi_1, \ldots, \varphi_h) \in \underline{d}^h$, we set
$$
P_{\sigma}[\overline{i};\overline{\varphi}] = (i_1|\varphi_1)  \cdots (i_h|\varphi_h) \
\partial_{(i_{\sigma(1)}|\varphi_1)} \cdots \partial_{(i_{\sigma(h)}|\varphi_h)}.
$$

By straightforward computation, the right-hand side of Eq. (\ref{higher Capelli}) equals
\begin{equation}
\frac {1} {dim(V^{\mu})} \ \sum_{\overline{i} = (i_1, \ldots, i_h) \in \underline{n}^h} \
\Big( \sum_{\sigma \in \mathbf{S}_h} \ \chi^{\widetilde{\mu}}(\sigma) \
\big( \sum_{\overline{\varphi} = (\varphi_1, \ldots, \varphi_h) \in \underline{d}^h} \
P_{\sigma}[\overline{i};\overline{\varphi}] \big) \Big).
\end{equation}

By Proposition \ref{column differential}, the action of the Capelli immanant
$$
Cimm_{\widetilde{\mu}}[i_1 i_2 \cdots i_h;i_1 i_2 \cdots i_h] \in \mathbf{U}(gl(n))
$$
on the algebra ${\mathbb C}[M_{n,d}]$ equals the action of the polynomial differential operator
$$
(-1)^{h \choose 2} \ \sum_{\sigma \in \mathbf{S}_h} \ \chi^{\widetilde{\mu}}(\sigma) \
\big( \sum_{\overline{\varphi} = (\varphi_1, \ldots, \varphi_h) \in \underline{d}^h} \
P_{\sigma}[\overline{i};\overline{\varphi}] \big),
$$
for every $\overline{i} = (i_1, \ldots, i_h) \in \underline{n}^h$.

Since the action of $\mathbf{U}(gl(n))$
on the algebra ${\mathbb C}[M_{n,d}]$ is a faithful action whenever $n \leq d$,
it immediately follows that any {\textit{quantum immanant}} equals - up to a scalar factor - the following
linear combinatio of {\textit{Capelli immanants}}:
\begin{equation}\label{quantum Capelli}
Tr \big( \mathbb{E}_\mathbf{T} \big) = (-1)^{h \choose 2} \frac {1} {dim(V^{\mu})} \
\sum_{(i_1, \ldots, i_h) \in \underline{n}^h} \ Cimm_{\widetilde{\mu}}[i_1 i_2 \cdots i_h;i_1 i_2 \cdots i_h] \in \mathbf{U}(gl(n)).
\end{equation}
Since
$$
Cimm_{\widetilde{\mu}}[i_1 i_2 \cdots i_h;i_1 i_2 \cdots i_h] =
Cimm_{\widetilde{\mu}}[i_{\tau(1)} i_{\tau(2)} \cdots i_{\tau(h)};i_{\tau(1)} i_{\tau(2)} \cdots i_{\tau(h)}],
$$
for every $\tau \in \mathbb{C}[\mathbf{S}_h]$, the right-hand side of Eq. (\ref{quantum Capelli})
equals
$$
(-1)^{h \choose 2}
\sum_{h_1 + h_2 + \cdots + h_n = h} \ \frac {H(\mu)} {h_1! h_2! \cdots h_n!}
 \ Cimm_{\widetilde{\mu}}[1^{h_1}2^{h_2} \ldots n^{h_n}; 1^{h_1}2^{h_2} \ldots n^{h_n}].
$$
\end{proof}

From Theorem \ref{quantum} and Corollary \ref{zero Cimm}, it follows:

\begin{corollary}
Let $\mathbf{T}$ be a multilinear standard tableau, $sh(\mathbf{T}) = \mu$.

If $\widetilde{\mu}_1  \nleq n$, then
$$
Tr \big( \mathbb{E}_\mathbf{T} \big) = 0.
$$
\end{corollary}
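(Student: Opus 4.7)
The plan is to read the corollary as a direct consequence of Theorem \ref{quantum} combined with Corollary \ref{zero Cimm}, so the work is essentially already done; I just need to verify that the shape condition transfers correctly.

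First I would invoke Theorem \ref{quantum}, which writes
\[
Tr\bigl(\mathbb{E}_\mathbf{T}\bigr) \;=\; (-1)^{h \choose 2}\!\!\sum_{h_1+\cdots+h_n=h}\! \frac{H(\mu)}{h_1!\,h_2!\cdots h_n!}\; Cimm_{\widetilde{\mu}}[\,1^{h_1}2^{h_2}\cdots n^{h_n};\,1^{h_1}2^{h_2}\cdots n^{h_n}\,].
\]
The crucial observation is that every Capelli immanant appearing on the right-hand side carries the \emph{same} shape $\widetilde{\mu}$; the multi-indices vary only in content, not in shape. Thus the hypothesis $\widetilde{\mu}_1 \nleq n$ applies uniformly to every summand.

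Next I would apply Corollary \ref{zero Cimm}: whenever $\lambda \vdash h$ satisfies $\lambda_1 \nleq n$, the Capelli immanant $Cimm_\lambda[i_1\cdots i_h; j_1\cdots j_h]$ vanishes identically in $\mathbf{U}(gl(n))$. Taking $\lambda = \widetilde{\mu}$, each summand in the displayed expansion is zero, hence the entire linear combination vanishes and $Tr(\mathbb{E}_\mathbf{T}) = 0$.

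There is no real obstacle here; the statement is a one-line corollary. The only thing that needs a moment of care is bookkeeping of the tilde: the paper's convention (see the Remark preceding Theorem \ref{quantum}) assigns the irreducible representation $V^\mu$ to the shape $\widetilde{\mu}$ in the notation of the earlier sections, which is exactly why the Capelli immanants in Theorem \ref{quantum} carry the label $\widetilde{\mu}$ rather than $\mu$. Once this convention is acknowledged, the hypothesis $\widetilde{\mu}_1 \nleq n$ matches precisely the vanishing hypothesis of Corollary \ref{zero Cimm}, and the proof is complete.
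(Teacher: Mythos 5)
Your proof is correct and is exactly the paper's argument: the corollary is deduced by combining Theorem \ref{quantum} (every summand is a Capelli immanant of shape $\widetilde{\mu}$) with the vanishing statement of Corollary \ref{zero Cimm}. Your extra remark about the tilde convention is accurate and harmless.
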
\qed

Let $\mu$ with $\widetilde{\mu}_1 \leq n$, and let recall that $\boldsymbol{\zeta}(n)$ is the center of $\mathbf{U}(gl(n))$.
According with Okoukov \cite{Okounkov1-BR}, \cite{Okounkov-BR},
the {\textit{Schur element}} $\mathbf{S}_\mu(n) \in \boldsymbol{\zeta}(n)$   is defined by setting
$$
\mathbf{S}_\mu(n) = \frac {dim(V^{\mu})} {h!} \ Tr \big( \mathbb{E}_\mathbf{T} \big).
$$
Since
$
dim(V^{\mu}) = \frac {h!} {H(\mu)},
$
Theorem \ref{quantum} implies:

\begin{corollary}\label{schur first}
\begin{equation}\label{schur first eq}
\mathbf{S}_\mu(n) = (-1)^{h \choose 2}
\sum_{h_1 + \cdots + h_n = h} \ \frac {1} {h_1!  \cdots h_n!}
 \ Cimm_{\widetilde{\mu}}[1^{h_1} \ldots n^{h_n}; 1^{h_1} \ldots n^{h_n}].
\end{equation}
\end{corollary}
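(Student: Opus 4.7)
The plan is to derive Corollary \ref{schur first} as an immediate consequence of Theorem \ref{quantum}, essentially by a normalization. First, I would recall the definition given just before the statement:
$$
\mathbf{S}_\mu(n) \; = \; \frac{dim(V^{\mu})}{h!}\; Tr\big(\mathbb{E}_\mathbf{T}\big),
$$
so that the passage from the quantum immanant $Tr(\mathbb{E}_\mathbf{T})$ to the Schur element $\mathbf{S}_\mu(n)$ is simply multiplication by the scalar $dim(V^\mu)/h!$.

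Next, I would invoke the hook length formula, already used earlier in the paper, namely $dim(V^\mu) = h!/H(\mu)$ (equivalently, $\chi^\mu(I)/h! = 1/H(\mu)$ in the notation of the preceding section). This identifies the overall prefactor as $1/H(\mu)$, so that
$$
\mathbf{S}_\mu(n) \; = \; \frac{1}{H(\mu)}\; Tr\big(\mathbb{E}_\mathbf{T}\big).
$$

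Finally, I would substitute Eq. (\ref{quantum first}) of Theorem \ref{quantum} for $Tr(\mathbb{E}_\mathbf{T})$. The factor $H(\mu)$ appearing in the numerator inside the sum cancels against the $1/H(\mu)$ pulled out in front, leaving
$$
\mathbf{S}_\mu(n) \; = \; (-1)^{h \choose 2}\sum_{h_1 + \cdots + h_n = h} \frac{1}{h_1!\,\cdots\, h_n!}\; Cimm_{\widetilde{\mu}}[1^{h_1}\ldots n^{h_n};\, 1^{h_1}\ldots n^{h_n}],
$$
which is precisely Eq. (\ref{schur first eq}).

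There is no real obstacle here: all of the substantive content, in particular the use of Proposition \ref{column differential} to identify Capelli immanants with the differential operators appearing in the higher Capelli identities, the invariance of the immanant under simultaneous reordering of the row and column indices, and the faithfulness of the action of $\mathbf{U}(gl(n))$ on $\mathbb{C}[M_{n,d}]$ for $n \leq d$, is already absorbed into the proof of Theorem \ref{quantum}. The corollary amounts to matching normalizations between Okounkov's conventions for the Schur element and the ``trace'' normalization of the quantum immanant.
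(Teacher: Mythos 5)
Your proposal is correct and coincides with the paper's own derivation: the corollary is obtained by multiplying Eq. (\ref{quantum first}) by the normalizing scalar $dim(V^{\mu})/h! = 1/H(\mu)$ and cancelling the hook number against the coefficient in Theorem \ref{quantum}. Nothing further is needed.
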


\begin{remark}
If $\mu = (1^h)$, $h \leq n$, is the column shape of length $h$, then $\mathbf{S}_{(1^h)}(n)$  is immediately recognized as  the
$h-$th \emph{determinantal Capelli generator} $\mathbf{H}_h$  (see, e.g. \emph{\cite{Brini4-BR}, \cite{BriUMI-BR}};
see also
Capelli \emph{\cite{Cap1-BR}, \cite{Cap2-BR}, \cite{Cap3-BR}} and \emph{\cite{Cap4-BR}}, Howe and Umeda
\emph{\cite{HU-BR}}).

If $\mu = (h)$ is the row shape of length $h$, then $\mathbf{S}_{(h)}(n)$  is immediately recognized as  the
$h-$th \emph{permanental Nazarov--Umeda generator} $\mathbf{I}_h$ (see, e.g. \emph{\cite{BriUMI-BR}},  Nazarov \emph{\cite{Nazarov-BR}}, Umeda \emph{\cite{UmedaHirai-BR}}).
\end{remark}

\begin{example}\label{quantum Schur}
Let $h = 3$, $\mu = (2, 1) = \widetilde{\mu}$, $n = 2$. Recall that ${H(\mu)} = 3$. Then

\begin{align}
\mathbf{S}_{(2, 1)}(2) &= - \frac {1} {2} \ \Big( Cimm_{(2, 1)}[112; 112] + Cimm_{(2, 1)}[122; 122] \Big)
\\
=&
- \left[
\begin{array}{c}
1\\  1 \\  2
\end{array}
\right| \left.
\begin{array}{c}
1\\  1 \\  2
\end{array}
\right]
+
\left[
\begin{array}{c}
1\\  1 \\  2
\end{array}
\right| \left.
\begin{array}{c}
1\\  2 \\  1
\end{array}
\right]
- \left[
\begin{array}{c}
1\\  2 \\  2
\end{array}
\right| \left.
\begin{array}{c}
1\\  2 \\  2
\end{array}
\right]
+
\left[
\begin{array}{c}
1\\  2 \\  2
\end{array}
\right| \left.
\begin{array}{c}
2\\  1 \\  2
\end{array}
\right]. \label{Schur example}
\end{align}

By Eq. (\ref{Schur example})  and Proposition \ref{devirtualization column}, we have:
\begin{align*}
\mathbf{S}_{(2, 1)}(2) =&
+ e_{1 1}^2e_{2 2} - e_{1 1}e_{2 2} - e_{1 1}e_{1 2}e_{2 1} + e_{1 1}^2 + e_{1 2}e_{2 1} - e_{1 1}
\\
&
+ e_{1 1}e_{2 2}^2 - e_{1 1}e_{2 2}
- e_{1 2}e_{2 1}e_{2 2} + e_{1 1}e_{2 2} + e_{1 2}e_{2 1} - e_{1 1}
\\
=&
+ e_{1 1}^2e_{2 2} - e_{1 1}e_{1 2}e_{2 1} + e_{1 1}e_{2 2}^2 - e_{1 2}e_{2 1}e_{2 2}
\\
&
- e_{1 1}e_{2 2}  + e_{1 1}^2 + 2 e_{1 2}e_{2 1} - 2 e_{1 1} \in \mathbf{U}(gl(2)).
\end{align*} \qed
\end{example}

\begin{remark}\label{quantum Schur}
According to \emph{Theorem \ref{Cimm vs YC}}, the central element  $\mathbf{S}_{(2, 1)}(2)$ also equals,
in turn, a linear combination of Young-Capelli bitableaux. Indeed, we have
\begin{align*}\label{Schur el}
&- \frac {1} {2} \
\left[
\begin{array}{cc}
 1 & 2 \\  1
\end{array}
\right| \left.
\fbox{$
\begin{array}{cc}
1 & 2 \\ 1 &
\end{array}
$} \
\right]
-
\left[
\begin{array}{cc}
 1 & 2 \\  2
\end{array}
\right| \left.
\fbox{$
\begin{array}{cc}
1 & 2 \\ 2 &
\end{array}
$} \
\right]
\\
=& - \left[
\begin{array}{c}
1\\  1 \\  2
\end{array}
\right| \left.
\begin{array}{c}
1\\  1 \\  2
\end{array}
\right]
+
\left[
\begin{array}{c}
1\\  1 \\  2
\end{array}
\right| \left.
\begin{array}{c}
1\\  2 \\  1
\end{array}
\right]
- \left[
\begin{array}{c}
1\\  2 \\  2
\end{array}
\right| \left.
\begin{array}{c}
1\\  2 \\  2
\end{array}
\right]
+
\left[
\begin{array}{c}
1\\  2 \\  2
\end{array}
\right| \left.
\begin{array}{c}
2\\  1 \\  2
\end{array}
\right] =
 \
\mathbf{S}_{(2, 1)}(2).
\end{align*}

The previous identity  is an instance of an alternative presentation
(see our preliminary manuscript \emph{\cite{BriniTeolis-BR}}, \emph{Subsection $4.5.1$})
of the Schur element $\mathbf{S}_\mu(n) \in \boldsymbol{\zeta}(n), \ \widetilde{\mu}_1 \leq n$
of \emph{Corollary \ref{schur first}}. \qed
\end{remark}

Given a pair of row (strictly) increasing tableaux $S$ and $T$  of shape
$sh(S) = sh(T) = \widetilde{\mu} \vdash   h$ on the proper alphabet $L = \{1, 2,  \ldots, n \}$, consider the element
$$
e_{S,C^*_{\widetilde{\mu}}} \cdot e_{C^*_{\widetilde{\mu}},D^*_{\widetilde{\mu}}}
 \cdot  e_{D^*_{\widetilde{\mu}},C^*_{\widetilde{\mu}}}
\cdot e_{C^*_{\widetilde{\mu}},T} \in Virt(m_0+m_1,n) \subset {\mathbf{U}}(gl(m_0|m_1+n)).
$$
We set
\begin{equation}\label{rightYoungCapelli}
[\ \fbox{$S \ | \ T$}\ ]  = \mathfrak{p} \Big( e_{S,C^*_{\widetilde{\mu}}} \cdot e_{C^*_{\widetilde{\mu}},D^*_{\widetilde{\mu}}}
 \cdot  e_{D^*_{\widetilde{\mu}},C^*_{\widetilde{\mu}}}
\cdot e_{C^*_{\widetilde{\mu}},T} \Big)
          \in {\mathbf{U}}(gl(n)).
\end{equation}
and call the element $[\ \fbox{$S \ | \ T$}\ ] $ a {\textit{double Young-Capelli bitableau}}.

\begin{proposition} Any \emph{double Young-Capelli bitableau}
equals a sum of Young-Capelli bitableaux:
$$
[\ \fbox{$S \ | \ T$}\ ] = (-1)^ {h \choose 2} \ \sum_{\sigma} \ (-1)^{|\sigma|} \  [S|\fbox{$T^{\sigma}$}],
$$
where the sum is extended to all  Young tableaux $T^{\sigma}$ obtained from $T$ by  permutations of the elements
of each row, and $(-1)^{|\sigma|}$ is the product of the signatures of row permutations.
\end{proposition}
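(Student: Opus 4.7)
The plan is to reduce the identity to a single supercommutation calculation in the virtual algebra modulo the left ideal $\mathbf{Irr}$ of irregular expressions, and then apply the Capelli devirtualization epimorphism $\mathfrak{p}$. Writing $P := e_{S, C^*_\lambda} \cdot e_{C^*_\lambda, D^*_\lambda}$, one has $[\,\fbox{$S|T$}\,] = \mathfrak{p}(P \cdot e_{D^*_\lambda, C^*_\lambda} \cdot e_{C^*_\lambda, T})$ and $[S|\fbox{$T^\sigma$}] = \mathfrak{p}(P \cdot e_{D^*_\lambda, T^\sigma})$ for every row permutation $\sigma$. Since $\mathbf{Irr}$ is a left ideal of $\mathbf{U}(gl(m_0|m_1+n))$ and $\mathfrak{p}$ annihilates $\mathbf{Irr}$, the proposition is equivalent to the \emph{Key Lemma}
\[
e_{D^*_\lambda, C^*_\lambda} \cdot e_{C^*_\lambda, T} \;\equiv\; (-1)^{\binom{h}{2}} \sum_{\sigma} (-1)^{|\sigma|}\, e_{D^*_\lambda, T^\sigma} \pmod{\mathbf{Irr}},
\]
where $\lambda = sh(S) = sh(T)$ and the sum ranges over all row permutations $\sigma$ of $T$.

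The Key Lemma is proved by systematic ``contraction''. The basic supercommutator identity
\[
e_{\beta_j, \alpha_i}\cdot e_{\alpha_i, t} \;=\; e_{\beta_j, t} \;-\; e_{\alpha_i, t}\cdot e_{\beta_j, \alpha_i}, \qquad t \in L,
\]
combined with the anticommutation of any two distinct parity-$1$ factors of the forms $e_{\beta_\cdot, \alpha_\cdot}$ or $e_{\alpha_\cdot, \cdot}$, allows each $e_{\beta_j, \alpha_i}$ in $e_{D^*_\lambda, C^*_\lambda}$ to be pushed rightward through the block $e_{C^*_\lambda, T}$. Whenever some $e_{\beta_j, \alpha_i}$ reaches the extreme right with no subsequent $\alpha_i$-creator, the resulting term is irregular and drops out. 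The surviving terms are ``complete pairings'' in which each $e_{\beta_j, \alpha_i}$ contracts with some $e_{\alpha_i, T(i,j')}$; because the virtual symbol $\alpha_i$ is shared among all cells of row $i$ of $\lambda$, such pairings amount to independent bijections on the columns of each row, assembling into a row permutation $\sigma$ of $T$, with the contracted product equal to $e_{D^*_\lambda, T^\sigma}$ up to an overall sign.

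The sign accounting is the principal technical obstacle. The strategy is first to verify the single-row identity
\[
e_{\beta_{j_1},\alpha_i}\cdots e_{\beta_{j_r},\alpha_i}\cdot e_{\alpha_i,t_1}\cdots e_{\alpha_i,t_r} \;\equiv\; (-1)^{\binom{r}{2}} \sum_{\pi \in \mathbf{S}_r} (-1)^{|\pi|}\, e_{\beta_{j_1}, t_{\pi(1)}}\cdots e_{\beta_{j_r}, t_{\pi(r)}} \pmod{\mathbf{Irr}},
\]
with $r = \lambda_i$, by induction on $r$ from the basic relation above; this yields a per-row sign $(-1)^{\binom{\lambda_i}{2}}$. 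Combining the rows then requires reordering parity-$1$ factors across distinct rows, contributing the extra sign $(-1)^{\sum_{i<i'}\lambda_i\lambda_{i'}}$. The elementary combinatorial identity $\binom{h}{2} = \sum_i \binom{\lambda_i}{2} + \sum_{i<i'}\lambda_i\lambda_{i'}$ packages these into the global prefactor $(-1)^{\binom{h}{2}}$ of the Key Lemma, after which left-multiplication by $P$ and application of $\mathfrak{p}$ complete the proof.
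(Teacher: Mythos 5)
Your proof is correct and follows essentially the route the paper intends: the paper itself only cites the preliminary manuscript \cite{BriniTeolis-BR} for this proposition, but it proves the analogous Proposition \ref{YC bit and C bit} by exactly this kind of commutator calculus in ${\mathbf{U}}(gl(m_0|m_1+n))$ followed by the Capelli epimorphism. Your reduction to the congruence $e_{D^*_\lambda,C^*_\lambda}\,e_{C^*_\lambda,T}\equiv(-1)^{\binom{h}{2}}\sum_\sigma(-1)^{|\sigma|}e_{D^*_\lambda,T^\sigma}\pmod{\mathbf{Irr}}$ is legitimate because $\mathbf{Irr}$ is a left ideal, the dropped terms are genuinely irregular since $\alpha_i$ never reappears to the right of a dangling $e_{\beta_j,\alpha_i}$, and the sign bookkeeping $(-1)^{\sum_i\binom{\lambda_i}{2}}\cdot(-1)^{\sum_{i<i'}\lambda_i\lambda_{i'}}=(-1)^{\binom{h}{2}}$ checks out.
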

\begin{proof} See our preliminary manuscript \cite{BriniTeolis-BR}.
\end{proof}

\begin{theorem}\label{quantum second}
We have

\begin{equation}\label{schur second eq}
\mathbf{S}_\mu(n) =
\frac {1} {H(\mu)} \  \ \sum_S \ [\ \fbox{$S \ | \ S$}\ ]  \in {\mathbf{U}}(gl(n))
\end{equation}
where the sum is extended to all  row (strictly) increasing tableaux $S$ of shape $sh(S) = \widetilde{\mu} \vdash h$
on the proper alphabet $L = \{1, 2,  \ldots, n \}$.
\end{theorem}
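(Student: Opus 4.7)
The plan is to verify \eqref{schur second eq} by showing that both sides act as the same polynomial differential operator on ${\mathbb C}[M_{n,d}]$ for $d \geq n$; since the action of $\mathbf{U}(gl(n))$ on ${\mathbb C}[M_{n,d}]$ is faithful in that range, this reduces the identity in $\mathbf{U}(gl(n))$ to an explicit differential-operator calculation. By the higher Capelli identity \eqref{higher Capelli}, the action of $\mathbf{S}_\mu(n) = \frac{dim(V^{\mu})}{h!} Tr(\mathbb{E}_\mathbf{T})$ is the operator $\frac{1}{h!} Tr(X^{\otimes h}(D')^{\otimes h}\overline{\chi}^h_{\widetilde{\mu}})$, and using $\overline{\chi}_{\widetilde{\mu}} = H(\mu)\sum_\mathbf{T} \gamma^{\widetilde{\mu}}_{\mathbf{T}\mathbf{T}}$ from the proposition following \eqref{character}, this rewrites as $\frac{H(\mu)}{h!} \sum_\mathbf{T} Tr(X^{\otimes h}(D')^{\otimes h}(\gamma^{\widetilde{\mu}}_{\mathbf{T}\mathbf{T}})^h)$.

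The second step is to compute the action of a single double Young-Capelli bitableau $[\ \fbox{$S \ | \ S$}\ ]$ by unwinding its virtual presentation $e_{S,C^*_{\widetilde{\mu}}} \cdot e_{C^*_{\widetilde{\mu}},D^*_{\widetilde{\mu}}} \cdot e_{D^*_{\widetilde{\mu}},C^*_{\widetilde{\mu}}} \cdot e_{C^*_{\widetilde{\mu}},S}$, following the strategy used in the proof of Proposition \ref{column differential}. The outer factors $e_{C^*_{\widetilde{\mu}},S}$ and $e_{S,C^*_{\widetilde{\mu}}}$ contribute, by direct application of \eqref{differential super} and \eqref{virtual part}, a sum over $\overline{\varphi} \in \underline{d}^h$ of partial derivatives $\partial_{(S(k)|\varphi_k)}$ and multiplications by $(S(k)|\varphi_k)$. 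The middle composition $e_{C^*_{\widetilde{\mu}},D^*_{\widetilde{\mu}}} \cdot e_{D^*_{\widetilde{\mu}},C^*_{\widetilde{\mu}}}$ acts purely on virtual indices and, via the supercommutation rules for even positive virtual symbols against odd negative virtual symbols, produces the Young symmetrizer $\mathbf{e}^{\widetilde{\mu}}_{\mathbf{T}_0\mathbf{T}_0}$ acting on the $\overline{\varphi}$-indices; this is the virtual-alphabet counterpart of Proposition \ref{multilinear}. The net result is that $[\ \fbox{$S \ | \ S$}\ ]$ acts as
$$
(-1)^{h \choose 2}\sum_{\overline{\varphi} \in \underline{d}^h}\left(\prod_k (S(k)|\varphi_k)\right) \mathbf{e}^{\widetilde{\mu}}_{\mathbf{T}_0\mathbf{T}_0}(\overline{\varphi}) \left(\prod_k \partial_{(S(k)|\varphi_k)}\right).
$$

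The final step is to sum over all row-strictly-increasing tableaux $S$ of shape $\widetilde{\mu}$ on $\{1,\ldots,n\}$. Writing $S = I_{\mathbf{T}_0}$, the sum over such $S$ combined with the row-antisymmetrizer $\sum_{\sigma \in R(\mathbf{T}_0)}(-1)^{|\sigma|}\sigma$ implicit in $\mathbf{e}^{\widetilde{\mu}}_{\mathbf{T}_0\mathbf{T}_0}$ reconstructs, after a standard reindexing of the $k$-labels, the full sum over all $\overline{i}: \underline{h}\to \underline{n}$ (tuples with row-repeats contribute zero by the row-antisymmetry of $\mathbf{e}^{\widetilde{\mu}}_{\mathbf{T}_0\mathbf{T}_0}$). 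Identifying the resulting differential operator with $H(\mu)$ times the higher-Capelli operator for $\mathbf{S}_\mu(n)$ computed in the first step, via the identity $\overline{\chi}_{\widetilde{\mu}} = H(\mu)\sum_\mathbf{T} \gamma^{\widetilde{\mu}}_{\mathbf{T}\mathbf{T}}$, together with faithfulness of the action for $d \geq n$, yields $\sum_S [\ \fbox{$S \ | \ S$}\ ] = H(\mu)\,\mathbf{S}_\mu(n)$ and hence \eqref{schur second eq}. The main obstacle is the middle step: establishing rigorously that $e_{C^*_{\widetilde{\mu}},D^*_{\widetilde{\mu}}} \cdot e_{D^*_{\widetilde{\mu}},C^*_{\widetilde{\mu}}}$ implements exactly the Young symmetrizer on virtual indices, which demands careful bookkeeping of the $\mathbb{Z}_2$-parities arising in the commutations between even and odd virtual symbols.
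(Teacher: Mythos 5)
Your strategy---compute the action of both sides as polynomial differential operators on ${\mathbb C}[M_{n,d}]$ and invoke faithfulness for $d \geq n$---is genuinely different from the paper's proof, which never computes the action of the double Young--Capelli bitableaux at all: it verifies that the right-hand side of Eq.~(\ref{schur second eq}) is central, shows that its Harish--Chandra image satisfies the hypotheses of the Sahi/Okounkov characterization theorem for the shifted Schur polynomial $s^*_{\mu|n}$, and concludes by the uniqueness built into that theorem. Your route is in the spirit of the paper's proof of Theorem \ref{quantum} and would, if completed, be more self-contained (no appeal to the characterization theorem), but as written it leaves unproved exactly the two steps where the content of the theorem lies.

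First, the claim that the middle composition $e_{C^*_{\widetilde{\mu}},D^*_{\widetilde{\mu}}} \cdot e_{D^*_{\widetilde{\mu}},C^*_{\widetilde{\mu}}}$ implements the Young symmetrizer $\mathbf{e}^{\widetilde{\mu}}_{\mathbf{T}_0\mathbf{T}_0}$ on the place indices is not only unestablished (you flag it as the main obstacle); it is not the right statement. The four factors of the virtual presentation pass through the row-constant, anticommuting $\alpha$'s \emph{twice} and through the column-constant, commuting $\beta$'s once, so the element of ${\mathbb C}[\mathbf{S}_h]$ they implement has the form (row antisymmetrizer)(column symmetrizer)(row antisymmetrizer), not the single product $\mathbf{e}^{\widetilde{\mu}}_{\mathbf{T}_0\mathbf{T}_0}$. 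This is exactly what the Proposition preceding the theorem records: $[\ \fbox{$S \ | \ T$}\ ]$ equals a \emph{signed sum over row permutations} of Young--Capelli bitableaux $[S|\fbox{$T^{\sigma}$}]$, i.e.\ it carries an extra row antisymmetrization beyond $[S|\fbox{$T$}]$. Second, the concluding step (``after a standard reindexing\dots reconstructs the full sum over all $\overline{i}$'') is where the constant $H(\mu)$ and the identification with $\overline{\chi}_{\widetilde{\mu}} = H(\mu)\sum_{\mathbf{T}}\gamma^{\widetilde{\mu}}_{\mathbf{T}\mathbf{T}}$ must actually be produced, and it is not a routine reindexing: the worked example following the theorem shows that reducing $[\ \fbox{$S \ | \ S$}\ ]$ to $[S|\fbox{$S$}]$ yields \emph{different} coefficients ($-3/2$ versus $-3$) for the two row-increasing $S$ of shape $(2,1)$ on $\{1,2\}$, so the interaction between the sum over row-increasing $S$ and the extra row antisymmetrization is content-dependent rather than uniform. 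Until the middle operator is computed correctly and the resulting sum over $S$ is honestly matched, in ${\mathbb C}[\mathbf{S}_h]$, against $\frac{H(\mu)}{h!}\,\overline{\chi}_{\widetilde{\mu}}$ summed over all tuples $\overline{i} \in \underline{n}^h$, what you have is a plan rather than a proof.
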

\begin{proof}(Sketch) Let $\chi_n$ be the Harish-Chandra isomorphism
$$
\chi_n : \boldsymbol{\zeta}(n) \longrightarrow \Lambda^*(n),
$$
where  $\boldsymbol{\zeta}(n)$ is the center $\boldsymbol{\zeta}(n)$  of ${\mathbf{U}}(gl(n))$, and
$\Lambda^*(n)$ is the algebra of
{\it{shifted symmetric polynomials}} in $n$ variables (see, e.g. \cite{OkOlsh-BR}).

The technique is to show that both sides of Eq. (\ref{schur second eq}) have the same image under the
isomorphism $\chi_n$.

The right-hand side of Eq. (\ref{schur second eq}) is easily proved to be an
element of the center $\boldsymbol{\zeta}(n)$, and its image via the Harish-Chandra isomorphism satisfies
the hypotheses (see Theorem $6.64$ of \cite{BriniTeolis-BR}) of the {\it{Sahi/Okounkov Characterization Theorem}}
(Theorem 1 of \cite{Sahi1-BR} and Theorem 3.3 of \cite{OkOlsh-BR}, see also \cite{Okounkov-BR})
for the {\it{Schur shifted symmetric polynomial}} $s_{\mu|n}^*$ of \cite{OkOlsh-BR}. Then
$$
\chi_n \left( \frac {1} {H(\mu)} \  \ \sum_S \ [\ \fbox{$S \ | \ S$}\ ] \right) = s_{\mu|n}^*.
$$
Since
$$
\chi_n \left( \frac {dim(V^{\mu})} {h!} \ Tr \big( \mathbb{E}_\mathbf{T} \big) \right) = s_{\mu|n}^*
$$
(see \cite{Okounkov1-BR}, \cite{Okounkov-BR}), the assertion follows
(for details, see our preliminary manuscript \cite{BriniTeolis-BR}).
\end{proof}

\begin{example} We have
\begin{align*}
\mathbf{S}_{(2, 1)}(2) =& \ \frac {1} {3} \
\Big(
\left[ \
\fbox{$
\begin{array}{cc}
 1 & 2 \\  1
\end{array}
\bigg| \
\begin{array}{cc}
1 & 2 \\ 1 &
\end{array}
$} \
\right]
+
\left[ \
\fbox{$
\begin{array}{cc}
 1 & 2 \\  2
\end{array}
\bigg| \
\begin{array}{cc}
1 & 2 \\ 2 &
\end{array}
$} \
\right]
\Big)
\\
=& \ \frac {1} {3} \ \Big(
- \left[
\begin{array}{cc}
 1 & 2 \\  1
\end{array}
\right| \left.
\fbox{$
\begin{array}{cc}
1 & 2 \\ 1 &
\end{array}
$} \
\right]
+
\left[
\begin{array}{cc}
 1 & 2 \\  1
\end{array}
\right| \left.
\fbox{$
\begin{array}{cc}
2 & 1 \\ 1 &
\end{array}
$} \
\right]
\\
&
\phantom{\ \frac {1} {3} \ \big(}
- \left[
\begin{array}{cc}
 1 & 2 \\  2
\end{array}
\right| \left.
\fbox{$
\begin{array}{cc}
1 & 2 \\ 2 &
\end{array}
$} \
\right]
+
\left[
\begin{array}{cc}
 1 & 2 \\  2
\end{array}
\right| \left.
\fbox{$
\begin{array}{cc}
2 & 1 \\ 2 &
\end{array}
$} \
\right]
\Big).
\end{align*}

Since
$$
- \left[
\begin{array}{cc}
 1 & 2 \\  1
\end{array}
\right| \left.
\fbox{$
\begin{array}{cc}
1 & 2 \\ 1 &
\end{array}
$} \
\right]
+
\left[
\begin{array}{cc}
 1 & 2 \\  1
\end{array}
\right| \left.
\fbox{$
\begin{array}{cc}
2 & 1 \\ 1 &
\end{array}
$} \
\right] =
\ - \frac {3} {2} \
\left[
\begin{array}{cc}
 1 & 2 \\  1
\end{array}
\right| \left.
\fbox{$
\begin{array}{cc}
1 & 2 \\ 1 &
\end{array}
$} \
\right]
$$
and
$$
- \left[
\begin{array}{cc}
 1 & 2 \\  2
\end{array}
\right| \left.
\fbox{$
\begin{array}{cc}
1 & 2 \\ 2 &
\end{array}
$} \
\right]
+
\left[
\begin{array}{cc}
 1 & 2 \\  2
\end{array}
\right| \left.
\fbox{$
\begin{array}{cc}
2 & 1 \\ 2 &
\end{array}
$} \
\right] =
\ - 3 \
\left[
\begin{array}{cc}
 1 & 2 \\  2
\end{array}
\right| \left.
\fbox{$
\begin{array}{cc}
1 & 2 \\ 2 &
\end{array}
$} \
\right],
$$
then
\begin{align*}
\mathbf{S}_{(2, 1)}(2) &= \ \frac {1} {3} \
\Big(
\left[ \
\fbox{$
\begin{array}{cc}
 1 & 2 \\  1
\end{array}
\bigg| \
\begin{array}{cc}
1 & 2 \\ 1 &
\end{array}
$} \
\right]
+
\left[ \
\fbox{$
\begin{array}{cc}
 1 & 2 \\  2
\end{array}
\bigg| \
\begin{array}{cc}
1 & 2 \\ 2 &
\end{array}
$} \
\right]
\Big)
\\
&= \
- \frac {1} {2} \
\left[
\begin{array}{cc}
 1 & 2 \\  1
\end{array}
\right| \left.
\fbox{$
\begin{array}{cc}
1 & 2 \\ 1 &
\end{array}
$} \
\right]
-
\left[
\begin{array}{cc}
 1 & 2 \\  2
\end{array}
\right| \left.
\fbox{$
\begin{array}{cc}
1 & 2 \\ 2 &
\end{array}
$} \
\right],
\end{align*}
as in Remark \ref{quantum Schur}. \qed
\end{example}

Presentation (\ref{schur second eq}) is more supple and effective than presentation (\ref{schur first eq}).
Indeed:
\begin{itemize}

\item [--]
Presentation (\ref{schur second eq}) doesn't involve the irreducible characters of symmetric groups.

\item [--]
Presentation (\ref{schur second eq}) is better suited to the study of the eigenvalues on irreducible $gl(n)-$modules,
and of the duality in the algebra $\boldsymbol{\zeta}(n)$ (see our preliminary manuscript \cite{BriniTeolis-BR}, Section 4).

\item [--]
Presentation (\ref{schur second eq}) is better suited to the study of the limit $n \rightarrow \infty$,
via the \emph{Olshanski decomposition}, see our preliminary manuscript \cite{BriniTeolis-BR}, Section 5,
Olshanski \cite{Olsh1-BR}, \cite{Olsh3-BR} and
Molev \cite{Molev1-BR}, pp. 928 ff.

\end{itemize}

\end{document}